\documentclass[10pt,a4paper]{amsart}
\usepackage{amsmath,amssymb,amsthm}
\usepackage{mathtools}
\usepackage{enumitem}
\usepackage{hyperref}
\usepackage[margin=2.5cm]{geometry}
\usepackage{microtype}

\newtheorem{theorem}{Theorem}[section]
\newtheorem{lemma}[theorem]{Lemma}
\newtheorem{proposition}[theorem]{Proposition}
\newtheorem{corollary}[theorem]{Corollary}
\theoremstyle{definition}
\newtheorem{definition}[theorem]{Definition}

\theoremstyle{remark}
\newtheorem{remark}[theorem]{Remark}

\newcommand{\R}{\mathbb{R}}

\newcommand{\calB}{\mathcal{B}}
\newcommand{\Lip}{\mathrm{Lip}}
\newcommand{\Lipc}{\mathrm{Lip}_c}
\newcommand{\BV}{\mathrm{BV}}
\newcommand{\Var}{\mathrm{Var}}
\newcommand{\dmu}{d\mu}
\newcommand{\IX}{I_X}
\newcommand{\PhiX}{\Phi_X}
\newcommand{\mplus}{\mu^+}

\begin{document}

\title[Non-Euclidean unification of isoperimetric profiles
and grand Lebesgue-Sobolev scales]
{Non-Euclidean unification of isoperimetric profiles
and grand Lebesgue-Sobolev scales}

\author{Daniel Levin}
\address{Mathematical Institute, University of Oxford,
Andrew Wiles Building, Woodstock Road, Oxford OX2 6GG, United Kingdom}
\email{levindanie@gmail.com}

\author{Alexander Zuevsky}
\address{Institute of Mathematics, Czech Academy of Sciences,
\v{Z}itn\'a 25, Prague, Czech Republic}
\email{zuevsky@math.cas.cz}

\begin{abstract}
Let $(X,d,\mu)$ be a complete separable metric measure space satisfying
a doubling condition and a $(1,1)$-Poincar\'e inequality.
We develop a rigorous framework unifying two lines of analysis:
the isoperimetric-profile approach of Coulhon-Grigor'yan-Levin \cite{CGL2003}
and the grand/small Lebesgue-Sobolev scale introduced by
Fiorenza-Formica-Gogatishvili \cite{FFG2018}.
An explicit profile-to-scale transform $\PhiX$, defined via
an inverse integral of $\IX$, converts geometric data into grand
Lebesgue parameters.
Sharp, up to universal constants, embeddings
$W^{1,1}(X) \hookrightarrow \mathcal{G}_X$ with explicit constants
(Theorem \ref{thmmain}).
A converse: controlled grand embeddings imply explicit lower bounds
on $\IX$ (Theorem \ref{thmconverse}).
Concrete examples in genuinely non-Euclidean settings:
the Heisenberg group $\mathbb{H}^1$,
a model manifold with logarithmic volume growth, and
Gaussian measure on $\R^n$ treated as a locally doubling space.
All arguments are carried out on general metric measure spaces without
reference to charts or a smooth structure; the gradient is the
upper gradient in the sense of Heinonen-Koskela, and
perimeter is the outer Minkowski content.
\end{abstract}

\subjclass[2020]{53C20, 26D10, 46E35, 46E30, 35J65}
\keywords{Isoperimetric profiles; grand Lebesgue spaces;
Sobolev embeddings; metric measure spaces; Heisenberg group;
coarea formula; BV functions; upper gradients}

\maketitle

\begin{center}
{Conflict of Interest and Data availability Statements:}
\end{center}

The authors state that:

1.) The paper does not contain any potential conflicts of interests.

2.) The paper does not use any datasets. No datasets were generated
during and/or analysed during the current study.

3.) The paper includes all data generated or analysed during this study.

4.) Data sharing is not applicable to this article as no datasets were
generated or analysed during the current study.

5.) The data of the paper can be shared openly.

6.) No AI was used to write this paper.

\section{Introduction}
\subsection{Background and motivation}
The classical isoperimetric inequality on $\R^n$ asserts that among all
sets of prescribed Lebesgue measure the ball minimises the surface area.
Its functional counterpart is the Gagliardo-Nirenberg-Sobolev inequality
$\|u\|_{L^{n/(n-1)}(\R^n)}\le C_n\|\nabla u\|_{L^1(\R^n)}$.
On a general complete noncompact Riemannian manifold, or more generally
on a metric measure space, both statements must be formulated in terms
of a profile function $\IX$ (Definition \ref{isopo})
that encodes the minimal boundary measure of sets of given volume.
Two influential but hitherto separate programmes address these issues.

\textit{Programme A} \cite{CGL2003}.
Coulhon, Grigor'yan, and Levin study $L^p$-isoperimetric profiles of
Riemannian products $M\times N$ and prove equivalences between profiles
and families of functional inequalities: $F$-Sobolev, $m$-log-Sobolev,
and heat-kernel bounds.
Their approach is geometric: symmetrization, rearrangements, and the
coarea formula convert boundary information into $L^p$ norms.

\textit{Programme B} \cite{FFG2018}.
Fiorenza, Formica, and Gogatishvili develop a systematic theory of
grand and small Lebesgue and Sobolev spaces.
Sobolev embeddings into these refined target spaces are obtained via
weighted Hardy and Copson inequalities, discretization and
anti-discretization of rearrangement-invariant norms, and sharp
operator bounds.
The approach is purely analytic and applies on bounded Euclidean domains.

The present paper builds a bridge between the two programmes
on arbitrary metric measure spaces.
The key device is a one-dimensional transform $\PhiX$ (Definition \ref{defPhi})
that converts the isoperimetric profile of $X$ into the parameters of the
grand norm adapted to $X$.

\subsection{Main assumptions}
Throughout the paper $(X,d,\mu)$ denotes a complete separable metric
measure space satisfying the following conditions.

\begin{definition}[Standing assumptions]
\label{torlo}
\hfill
\begin{enumerate}[label=(\roman*)]
\item Borel measure: $\mu$ is a locally finite Borel measure on
the Borel $\sigma$-algebra $\calB(X)$;
\item Doubling: there exists $C_D\ge 1$ such that
$\mu(B(x,2r))\le C_D\;\mu(B(x,r))$
for all $x\in X$ and $r>0$;
\item $(1,1)$-Poincar\'e inequality:
there exist $C_P\ge 1$ and $\lambda\ge 1$ such that for every
$u\in\Lip(X)$, every ball $B(x,r)\subset X$, and every
upper gradient $g$ of $u$,
\[
\frac{1}{\mu(B(x,r))}\int_{B(x,r)}|u-u_{B(x,r)}|\;\dmu\le C_Pr
\cdot\frac{1}{\mu(B(x,\lambda r))}\int_{B(x,\lambda r)}g\;\dmu,
\]
where $u_{B(x,r)}=\mu(B(x,r))^{-1}\int_{B(x,r)}u\;\dmu$;
\item Normalisation: $\mu(X)=1$.
The dependence on scaling is indicated where it matters.
For infinite measure settings, the profile is analysed locally on sets of
measure less than $1/2$.
\item Positive profile: 
$\IX(s)>0$ for all $s\in(0,s_0)$ for some fixed
$s_0\in(0,1/2]$, see Definition \ref{isopo}.
\end{enumerate}
\end{definition}

\begin{remark}[Reconciling $\mu(X)=1$ with non-compact examples]
\label{remreconcile}
Assumption (iv) states $\mu(X)=1$ for notational convenience in the
proofs of Theorems \ref{thmmain} and \ref{thmconverse}.
In the non-compact examples of Section \ref{examples},
i.e., the Heisenberg group $\mathbb{H}^1$, Gaussian measure, model manifolds,
$\mu(X)=+\infty$. The normalisation is reconciled as follows.
\begin{enumerate}[label=(\roman*)]
\item Localisation: 
All estimates are local: fix a reference ball $B_0=B(x_0,R)$ and
replace $X$ by $(B_0,d|_{B_0},\mu|_{B_0})$, normalised so that
$\mu(B_0)=1$ by rescaling $\mu$. The isoperimetric profile satisfies
$I_{B_0}(s)\ge I_X(s)$ for $s\le\mu(B_0)/2$, thus the power-type lower
bound \eqref{Iassumppower} is inherited.
\item Compactly supported functions: 
Functions $u\in\Lipc(X)$ have compact support.
For any fixed such $u$, choose $B_0$ containing $\mathrm{supp}(u)$.
The grand embedding estimate is then applied on $B_0$ with the
normalised measure, and passes to the full space by support properties.
\item Gaussian measure: 
The Gaussian measure $\gamma_n$ on $\R^n$ satisfies $\gamma_n(\R^n)=1$
by definition, thus no measure rescaling is needed.
However, as detailed in Section \ref{gaussi}, $\gamma_n$ is
 not globally doubling; the framework is applied there
on bounded balls where local doubling holds.
\item Heisenberg group and manifolds: 
These have infinite total measure; one works with the
relative isoperimetric inequality on balls $B(x_0,R)$, as is
standard in the theory \cite{CGL2003, Jerison86}.
\end{enumerate}
For the main theorems, Theorems \ref{thmmain} and \ref{thmconverse},
we retain $\mu(X)=1$ without loss of generality.
\end{remark}

Assumptions (ii)-(iii) are standard in analysis on metric spaces
\cite{Heinonen2001}.
They ensure that: Lipschitz functions are dense in $W^{1,p}(X)$
(Theorem \ref{Lip_do}); the coarea formula
(Theorem \ref{coaro}) holds for $\BV(X)$ functions; and
mollification via ball averaging produces approximations with
controlled total variation (Section \ref{sumoll}).

\section{Preliminaries}
\subsection{Upper gradients and Sobolev spaces on metric spaces}
We work in the framework of Heinonen-Koskela \cite{HK98}.

\begin{definition}[Upper gradient]
A Borel function $g:X\to[0,+\infty]$ is an upper gradient of
$u:X\to\R$ if for every rectifiable curve $\gamma:[a,b]\to X$,
\[
|u(\gamma(b))-u(\gamma(a))|\le\int_\gamma g\;ds.
\]
If the inequality holds for $p$-almost every curve, i.e., outside a
curve family of zero $p$-modulus,
then $g$ is a $p$-weak upper gradient.
\end{definition}

\begin{definition}[Newtonian-Sobolev space]
For $1\le p<\infty$, the Newtonian-Sobolev space $N^{1,p}(X)$
(also denoted $W^{1,p}(X)$) consists of all $u\in L^p(X,\mu)$ for
which there exists a $p$-weak upper gradient $g\in L^p(X,\mu)$.
The Sobolev norm is
\[
\|u\|_{W^{1,p}(X)}:=\|u\|_{L^p(X)}+\inf_g\|g\|_{L^p(X)},
\]
where the infimum is over all $p$-weak upper gradients of $u$.
The minimal $p$-weak upper gradient $g_u$ exists $\mu$-a.e.\ and
is denoted $|\nabla u|$ by an abuse of notation consistent
with the smooth setting.
\end{definition}

Under assumption (iii) (Poincar\'e), $N^{1,p}(X)$ is a Banach space and
coincides with the Haj\l asz-Sobolev space $M^{1,p}(X)$ up to equivalent
norms \cite{Shanmugalingam00}.

\subsection{Lipschitz functions and the slope}

\begin{definition}[Pointwise Lipschitz constant]
For $u:X\to\R$, the local Lipschitz constant (slope) at $x\in X$ is
\[
\mathrm{lip}(u)(x):=\limsup_{y\to x}\frac{|u(y)-u(x)|}{d(y,x)}.
\]
For an isolated point one sets $\mathrm{lip}(u)(x)=0$.
If $u$ is Lipschitz then $\mathrm{lip}(u)\in L^\infty(X,\mu)$ and
$\mathrm{lip}(u)(x)\le\mathrm{Lip}(u)$ everywhere.
\end{definition}

It is standard that $\mathrm{lip}(u)$ is a $1$-weak upper gradient of
every $u\in\Lip(X)$ \cite[Chapter 6]{HKST15}.

\begin{theorem}[Density of Lipschitz functions]
\label{Lip_do}
Under the doubling condition and the $(1,1)$-Poincar\'e inequality,
$\Lipc(X)$ is dense in $W^{1,p}(X)$ for every $1\le p<\infty$.
\end{theorem}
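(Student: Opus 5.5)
Density of $\Lipc(X)$ in $N^{1,p}(X)$ is a two-stage approximation: first approximate $u\in N^{1,p}(X)$ by Lipschitz functions (not necessarily compactly supported), then cut off to compact support. For the first stage I would use the Haj\l asz-Sobolev identification $N^{1,p}(X)=M^{1,p}(X)$ quoted above (Shanmugalingam), valid under (ii)-(iii). For $p>1$ one can also argue through maximal functions. However, the statement includes $p=1$, where the Hardy-Littlewood maximal operator is not bounded on $L^1$. So I will follow the Haj\l asz/Shanmugalingam route, which works uniformly for $1\le p<\infty$.

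\textbf{Step 1 (Lipschitz truncation).} Take $u\in N^{1,p}(X)$ with minimal weak upper gradient $g_u\in L^p$.
\begin{itemize}
\item Using the Poincar\'e inequality (iii) and doubling (ii) via a telescoping chain of balls, derive the pointwise estimate
\[
|u(x)-u(y)|\le C\,d(x,y)\bigl(M_{2\lambda d(x,y)}g_u(x)+M_{2\lambda d(x,y)}g_u(y)\bigr)
\]
for Lebesgue points $x,y$, where $M_R$ is the restricted maximal function.
\item Set $E_t=\{Mg_u>t\}$. On $X\setminus E_t$ the function $u$ is $Ct$-Lipschitz. Extend it by McShane's formula to a $Ct$-Lipschitz $u_t$ on $X$, and truncate at level $\pm t$ (applied also to $u$) to keep $L^p$ control.
\item The weak-type bound $\mu(E_t)\le C t^{-1}\int_{\{g_u>t/2\}}g_u\,d\mu$ holds for all $p\ge1$. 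It gives $t^p\mu(E_t)\to0$ as $t\to\infty$.
\item Consequently $\|u-u_t\|_{L^p}\to0$ and $\|g_{u-u_t}\|_{L^p}\le \|g_u\chi_{E_t}\|_{L^p}+Ct\,\mu(E_t)^{1/p}\to0$. Here I use that $g_u\chi_{E_t}+Ct\chi_{E_t}$ is a weak upper gradient of $u-u_t$, because $u=u_t$ off $E_t$.
\end{itemize}

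\textbf{Step 2 (compact support).} Completeness plus doubling makes $X$ proper, so closed balls are compact. Let $\eta_R(x)=\min\{1,\max\{0,2-d(x,x_0)/R\}\}$. Then $\eta_R u_t\in\Lipc(X)$, and by the Leibniz rule for upper gradients
\[
g_{\eta_Ru_t}\le g_{u_t}+R^{-1}|u_t|\chi_{B(x_0,2R)\setminus B(x_0,R)}.
\]
Dominated convergence then gives $\eta_Ru_t\to u_t$ in $N^{1,p}$ as $R\to\infty$. A diagonal choice of $t$ and $R$ finishes the proof.

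\textbf{Main obstacle.} The hard step is the locality claim in Step 1: that $g_u\chi_{E_t}+Ct\chi_{E_t}$ is a $p$-weak upper gradient of $u-u_t$, given that $u-u_t$ vanishes only off an open set. Handling this needs the quasicontinuity of Newtonian functions. It also needs the fact that $u$ has a representative that agrees with the Lebesgue-point values everywhere except on a set of capacity zero, hence outside a $p$-exceptional curve family.

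At $p=1$ this quasicontinuity is delicate. I would first try to invoke the general theory in \cite{HKST15} (Chapter 8, Lipschitz approximation), which treats $p\ge1$ under a $(1,p)$-Poincar\'e inequality; (iii) with $p=1$ implies all $(1,p)$ versions. Failing that, I would fall back on Shanmugalingam's proof of $N^{1,p}=M^{1,p}$ and the density of Lipschitz functions in $M^{1,p}$, which is elementary by the Haj\l asz truncation above.
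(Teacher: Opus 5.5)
Your Steps 1--2 are correct, and they are essentially the argument the paper points to. The paper gives no proof beyond citing \cite{HKST15} and the Haj\l asz--Koskela approximation theorem. Your maximal-function Lipschitz truncation is exactly that theorem: the pointwise estimate from Poincar\'e, McShane extension off $E_t$, the weak-type bound giving $t^p\mu(E_t)\to 0$, and then a cut-off.

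Three corrections to the surrounding commentary.

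First, $N^{1,p}=M^{1,p}$ is a $p>1$ result and fails at $p=1$: $M^{1,1}(\R^n)$ is a Hardy--Sobolev space strictly smaller than $W^{1,1}(\R^n)$. The paper's remark after the definition of $N^{1,p}$ makes the same slip. So your claim that this route ``works uniformly for $1\le p<\infty$'' is wrong, and your fallback fails precisely at $p=1$. Your Step 1 never actually uses the identification, because the weak-type bound is what replaces $Mg_u\in L^p$ there, so simply drop that framing.

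Second, your ``main obstacle'' is not one. For every $p\ge 1$ and $w\in N^{1,p}_{\mathrm{loc}}(X)$, the minimal weak upper gradient satisfies $g_w=0$ a.e.\ on $\{w=0\}$. Your $w=u-u_t$ vanishes a.e.\ on $X\setminus E_t$, so $g_w\le (g_u+Ct)\chi_{E_t}$ a.e. No quasicontinuity or capacity-zero Lebesgue-point theorem is needed.

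Third, there is a point you should add. Assumption (iii) is stated only for Lipschitz $u$, yet your telescoping estimate applies it to a general $u\in N^{1,p}$. That step requires Keith's theorem: on a complete doubling space, Poincar\'e for Lipschitz functions implies it for all functions and their weak upper gradients. Deducing it instead from density of Lipschitz functions would be circular.
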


\begin{proof}
This is \cite[Theorem 4.24]{HKST15} combined with the
Haj\l asz-Koskela approximation theorem.
The doubling condition provides uniform control of local averages;
the Poincar\'e inequality guarantees convergence of upper gradients.
\end{proof}

\subsection{Bounded variation and perimeter}

\begin{definition}[$\BV(X)$]
A function $u\in L^1(X,\mu)$ belongs to the space of
functions of bounded variation $\BV(X)$ if
\[
\Var(u):= \inf\Bigl\{\liminf_{n\to\infty}\int_X \mathrm{lip}(u_n)\;\dmu :
u_n\in\Lip(X),\;u_n\to u \text{ in }L^1(X,\mu)\Bigr\}<\infty.
\]
$\Var(u)$ is the total variation of $u$.
\end{definition}

\begin{definition}[Perimeter via outer Minkowski content]
Let $E\subset X$ be $\mu$-measurable.
For $\varepsilon>0$ set $E_\varepsilon:=\{x\in X:d(x,E)<\varepsilon\}$.
The outer Minkowski content (perimeter) of $E$ is
\begin{equation*}
\mplus(E):=\liminf_{\varepsilon\downarrow 0}
\frac{\mu(E_\varepsilon)-\mu(E)}{\varepsilon}.
\end{equation*}
\end{definition}

\begin{proposition}[Bounded variation and perimeter]
Under assumptions (i)-(iii) of Definition \ref{torlo},
$\mplus(E)=\Var(\mathbf{1}_E)$ for every $\mu$-measurable $E\subset X$
with $\mu(E)<\infty$.
\end{proposition}

\begin{proof}
The inequality $\Var(\mathbf{1}_E)\le\mplus(E)$ follows from the
explicit Lipschitz approximation
$u_\varepsilon(x)=\max(0,1-d(x,E)/\varepsilon)$, which satisfies
$u_\varepsilon\to\mathbf{1}_E$ in $L^1$ and
$\int_X\mathrm{lip}(u_\varepsilon)\;\dmu=\varepsilon^{-1}
(\mu(E_\varepsilon)-\mu(E))$;
letting $\varepsilon\downarrow 0$ gives $\le\mplus(E)$.
The reverse inequality uses the coarea formula
(Theorem \ref{coaro}) applied to
$u_\varepsilon$ \cite[Theorem 5.3]{AmbDiMarino14}.
\end{proof}

\subsection{The coarea formula}

\begin{theorem}[Coarea formula on metric measure spaces]
\label{coaro}
Under the standing assumptions, for every $u\in\BV(X)$,
\begin{equation*}
\Var(u)=\int_{-\infty}^{+\infty}\mplus(\{x:u(x)>t\})\;dt.
\end{equation*}
\end{theorem}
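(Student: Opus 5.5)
The plan follows Miranda's argument for the coarea formula in doubling metric spaces. I will prove the two inequalities separately. I will use the relaxation definition of $\Var$ and write $E_t=\{u>t\}$. The perimeter appearing in the integral is the one obtained by relaxation, $\Var(\mathbf{1}_{E_t})$. By the Proposition on bounded variation and perimeter this equals $\mplus(E_t)$ whenever $\mu(E_t)<\infty$, which always holds since $\mu(X)=1$. The proof therefore reduces to
\[
\Var(u)=\int_{-\infty}^{+\infty}\Var(\mathbf{1}_{E_t})\,dt .
\]
A preliminary step is to check that $t\mapsto \Var(\mathbf{1}_{E_t})$ is Lebesgue measurable. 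It is a lower semicontinuous functional composed with $t\mapsto \mathbf{1}_{E_t}$, and the latter is continuous into $L^1$ outside the countably many $t$ with $\mu(\{u=t\})>0$.

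\emph{Step 1: $\int\Var(\mathbf{1}_{E_t})\,dt\le\Var(u)$.} Take Lipschitz $u_n\to u$ in $L^1$ with $\int\mathrm{lip}(u_n)\,d\mu\to\Var(u)$.
\begin{enumerate}[label=(\alph*)]
\item For a single Lipschitz $v$, prove $\int\Var(\mathbf{1}_{\{v>t\}})\,dt\le\int\mathrm{lip}(v)\,d\mu$. Use the truncations $v_{t,h}=\min(1,\max(0,(v-t)/h))$. These are Lipschitz and satisfy $\mathrm{lip}(v_{t,h})\le h^{-1}\mathrm{lip}(v)\mathbf{1}_{\{t<v\le t+h\}}$.
\item Integrating in $t$ gives $\int \int\mathrm{lip}(v_{t,h})\,d\mu\,dt\le\int\mathrm{lip}(v)\,d\mu$ by Fubini.
\item Let $h\downarrow0$. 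Use Fatou's lemma together with the convergence $v_{t,h}\to\mathbf{1}_{\{v>t\}}$ in $L^1$, which holds for all but countably many $t$.
\end{enumerate}
For general $u$, note that $\|u_n-u\|_{L^1}=\int\|\mathbf{1}_{\{u_n>t\}}-\mathbf{1}_{E_t}\|_{L^1}\,dt$. So after passing to a subsequence, $\mathbf{1}_{\{u_n>t\}}\to\mathbf{1}_{E_t}$ in $L^1$ for a.e. $t$. Lower semicontinuity of $\Var$ and Fatou's lemma then finish this direction.

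\emph{Step 2: $\Var(u)\le\int\Var(\mathbf{1}_{E_t})\,dt$.} We may assume the right side is finite. First reduce to $u$ bounded by truncation, and to $u\ge0$ by splitting $u=u^+-u^-$, since $\Var$ is subadditive. Then discretise the layer-cake formula. For a step $h>0$ and a shift $s\in[0,h)$, set
\[
u_{h,s}=h\sum_{k}\mathbf{1}_{E_{s+kh}} .
\]
Averaging over $s$, we get $\frac1h\int_0^h\|u_{h,s}-u\|_{L^1}\,ds\le h$ and
\[
\frac1h\int_0^h\sum_k h\,\Var(\mathbf{1}_{E_{s+kh}})\,ds=\int\Var(\mathbf{1}_{E_t})\,dt .
\]
Hence for each $h$ there is a shift $s$ with both quantities controlled. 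Subadditivity and positive homogeneity of $\Var$ give $\Var(u_{h,s})\le\sum_k h\Var(\mathbf{1}_{E_{s+kh}})$. Letting $h\to0$ and using lower semicontinuity of $\Var$ under $L^1$ convergence yields the bound. Subadditivity follows by adding approximating sequences, using $\mathrm{lip}(f+g)\le\mathrm{lip}f+\mathrm{lip}g$. Lower semicontinuity is a diagonal argument on the definition.

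\emph{Main obstacle.} The delicate point is the identification of $\Var(\mathbf{1}_{E})$ with $\mplus(E)$ for every level set. The Proposition proves only $\Var\le\mplus$ directly, and its reverse inequality cites this very theorem, so the argument risks being circular. I would avoid the circularity by stating the coarea identity with the relaxed perimeter $\Var(\mathbf{1}_{E_t})$; the steps above are self-contained in that form. For a.e. $t$, I would then invoke the Ambrosio--Di Marino result \cite{AmbDiMarino14}, which relies on doubling and the Poincar\'e inequality, to upgrade to $\mplus$. The fallback is to record that equality with $\mplus$ holds for a.e. $t$, or alternatively as an inequality $\le$ for all $t$.
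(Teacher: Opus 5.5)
Your Steps 1 and 2 are sound and give a self-contained proof of the relaxed coarea formula $\Var(u)=\int_{\R}\Var(\mathbf{1}_{E_t})\,dt$. They use three ingredients: the truncation/Fubini bound for Lipschitz functions, the a.e.-$t$ convergence of level sets along a subsequence, and the shifted discretisation of the layer-cake formula. The paper instead cites Miranda for Lipschitz $u$ and passes to $\BV$ by approximation. You are also right that the Proposition identifying $\Var(\mathbf{1}_E)$ with $\mplus(E)$ is circular.

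\textbf{The gap.} The genuine gap is your last step, the upgrade $\Var(\mathbf{1}_{E_t})=\mplus(E_t)$ for a.e.\ $t$. Ambrosio--Di Marino do not give this, and it is false under the standing assumptions. So the statement as written, with $\mplus$, cannot be reached.

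\emph{First counterexample: $\mplus$ depends on the representative.} On $X=[0,1]$ with Lebesgue measure, $u=\mathbf{1}_{\mathbb{Q}\cap X}$ is $0$ a.e., so $\Var(u)=0$. Yet $\{u>t\}$ is dense for $t\in[0,1)$, so $\mplus(\{u>t\})=+\infty$.

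\emph{Second counterexample: a Lipschitz $u$ with interval level sets.} The identity fails here too, on a set of levels of positive measure. Let $A\subset[0,1]$ be closed and nowhere dense with $\mathcal{L}^1(A)>0$. Put $d\mu=c(1+\mathbf{1}_A)\,dx$ on $X=[0,1]$ with $c=(1+\mathcal{L}^1(A))^{-1}$. This space is doubling, supports a $(1,1)$-Poincar\'e inequality, and has $\mu(X)=1$. Take $u(x)=x$. Then $\mplus(\{u>t\})=\liminf_{\varepsilon\downarrow 0}\varepsilon^{-1}\mu((t-\varepsilon,t])$, which equals $2c$ at density points $t$ of $A$ and $c$ for $t\in(0,1)\setminus A$. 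Hence $\int_{\R}\mplus(\{u>t\})\,dt=1$. On the other hand, let $u_n$ be nondecreasing and constant outside open intervals $G_k\subset((k-1)/n,k/n)\setminus A$, affine on each $G_k$, with $u_n(k/n)=k/n$. Then $\|u_n-u\|_\infty\le 1/n$ and $\int_X\mathrm{lip}(u_n)\,\dmu=c$, because the density of $\mu$ equals $c$ on each $G_k$. Hence $\Var(u)\le c<1$.

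\textbf{What survives.} You keep your relaxed identity together with $\Var(\mathbf{1}_E)\le\mplus(E)$. That inequality does hold in general: it follows from the truncation argument of your Step 1 applied to $d(\cdot,E)$ on levels in $(\delta,\varepsilon)$, plus lower semicontinuity. Together these give only $\Var(u)\le\int_{\R}\mplus(\{u>t\})\,dt$. In general the relaxed perimeter is the $L^1$-lower semicontinuous envelope of $\mplus$, not $\mplus$ itself.

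The paper's own proof breaks at exactly this point. It invokes lower semicontinuity of $E\mapsto\mplus(E)$ under $L^1$ convergence, which already fails for $E_n=\emptyset$ and $E=\mathbb{Q}\cap[0,1]$. The weighted example above shows that even its Lipschitz case is false.

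So drop the a.e.\ upgrade and state the result with $\Var(\mathbf{1}_{\{u>t\}})$ in place of $\mplus(\{u>t\})$. In that form your Steps 1--2 prove it.
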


\begin{proof}
For $u\in\Lipc(X)$ the identity is proved by the layer-cake argument
and the co-area inequality for Lipschitz functions on metric spaces
\cite[Theorem 4.2]{Miranda03}.
For general $u\in\BV(X)$, approximate by Lipschitz functions
$(u_n)$ with $u_n\to u$ in $L^1$ and $\Var(u_n)\to\Var(u)$,
and apply Fatou's lemma together with the lower semicontinuity of
$E\mapsto\mplus(E)$ under $L^1$ convergence of $\mathbf{1}_E$.
\end{proof}

\subsection{Cavalieri's principle and rearrangements}

\begin{proposition}[Layer-cake representation]
Let $u:X\to[0,\infty]$ be $\mu$-measurable and $1\le p<\infty$.
Then
\begin{equation*}
\|u\|_{L^p(X,\mu)}^p=p\int_0^\infty t^{p-1}\;\mu(\{u>t\})\;dt.
\end{equation*}
\end{proposition}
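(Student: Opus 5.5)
We prove the identity by writing $u^p$ as an integral over $t$ and exchanging the order of integration with Tonelli's theorem. For every $x\in X$ with $u(x)\in[0,\infty)$, the fundamental theorem of calculus applied to $t\mapsto t^p$ on $[0,u(x)]$ gives
\[
u(x)^p=\int_0^{u(x)}p\,t^{p-1}\,dt=\int_0^\infty p\,t^{p-1}\,\mathbf{1}_{\{t<u(x)\}}\,dt .
\]
If $u(x)=+\infty$, both sides equal $+\infty$, since $\int_0^\infty p\,t^{p-1}\,dt=+\infty$ for $p\ge 1$. The pointwise representation therefore holds on all of $X$ in $[0,\infty]$.

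Next we integrate in $x$ against $\mu$ and swap the integrals. Tonelli's theorem applies because the integrand $(x,t)\mapsto p\,t^{p-1}\mathbf{1}_{\{(x,t):\,u(x)>t\}}$ is nonnegative, and two further facts hold:
\begin{itemize}
\item[(a)] The set $\{(x,t): u(x)>t\}$ is measurable for $\mu\otimes\mathcal{L}^1$, since it is the preimage of the open set $\{(a,b):a>b\}\subset[0,\infty]\times\R$ under the product-measurable map $(x,t)\mapsto(u(x),t)$.
\item[(b)] Lebesgue measure on $(0,\infty)$ is $\sigma$-finite.
\end{itemize}
The only point needing care is $\sigma$-finiteness of $\mu$. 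Under the standing assumptions this holds: $\mu$ is locally finite, and $X$ is separable, so $X$ is a countable union of balls of finite measure (indeed $\mu(X)=1$ under (iv)). For completeness, Tonelli can also be avoided. Approximate $u$ from below by simple functions $u_k\uparrow u$. The identity is a direct computation for simple functions, and both sides pass to the limit by monotone convergence, using $\mu(\{u_k>t\})\uparrow\mu(\{u>t\})$ for each $t$ by continuity from below.

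After the exchange, the inner integral is $\int_X\mathbf{1}_{\{u>t\}}\,\dmu=\mu(\{u>t\})$. This gives
\[
\|u\|_{L^p(X,\mu)}^p=\int_X u^p\,\dmu=p\int_0^\infty t^{p-1}\mu(\{u>t\})\,dt,
\]
as an identity in $[0,\infty]$.

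There is no real obstacle in this argument. The only step requiring justification is the measurability and $\sigma$-finiteness needed for Tonelli; both follow from the standing assumptions, or can be bypassed by the simple-function approximation. Note also that the case $p=1$ is included and gives the usual formula $\int u\,\dmu=\int_0^\infty\mu(\{u>t\})\,dt$.
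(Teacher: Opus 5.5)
Your proof is correct and follows the same route as the paper, which writes $u(x)^p=\int_0^{u(x)}p\,t^{p-1}\,dt$ and exchanges the order of integration by Tonelli's theorem. Your checks of joint measurability and $\sigma$-finiteness, and the alternative via simple functions, supply details the paper's one-line proof leaves implicit.
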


\begin{proof}
By Tonelli's theorem,
$\int_X u^p\;\dmu=\int_X\int_0^{u(x)} p\;t^{p-1}\;dt\;\dmu(x)
=p\int_0^\infty t^{p-1}\mu(\{u>t\})\;dt$.
\end{proof}

\begin{definition}[Decreasing rearrangement]
\label{rearra}
For $u\in L^1(X,\mu)$, the decreasing rearrangement is
\[
u^*(s):=\inf\{t\ge 0:\mu(\{|u|>t\})\le s\}, \quad s\ge 0.
\]
\end{definition}

\begin{lemma}[Norm identity and rearrangement inequalities]
\label{lemra}
\hfill
\begin{enumerate}[label=(\alph*)]
\item \textup{(Layer-cake formula)}
$\|u\|_{L^q}^q=q\int_0^\infty t^{q-1}\mu(\{|u|>t\})\;dt
=\int_0^{\mu(X)}(u^*(s))^q\;ds$;
\item \textup{(Weak-type bound on rearrangement)}
$u^*(s)\le s^{-1/q}\|u\|_{L^q}$
for every $s>0$ and $1\le q<\infty$.
Equivalently,
$\|u\|_{L^{q,\infty}}:=\sup_{s>0}s^{1/q}u^*(s)\le\|u\|_{L^q}$.
Note that the reverse inequality $\|u\|_{L^q}\le C\|u\|_{L^{q,\infty}}$
is false in general.
\end{enumerate}
\end{lemma}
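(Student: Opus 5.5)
For part (a), I will write $\lambda(t):=\mu(\{|u|>t\})$ for the distribution function of $|u|$. The first equality is the layer-cake representation (Proposition above) applied to the nonnegative measurable function $|u|$, with $p=q$. For the second equality I will show that $u^*$ and $|u|$ are equimeasurable, i.e.
\[
|\{s\in[0,\mu(X)):u^*(s)>t\}|=\lambda(t)\qquad\text{for every } t\ge 0 .
\]
Since $\lambda$ is nonincreasing and right-continuous (by continuity of $\mu$ from below along $\{|u|>t+1/n\}\uparrow\{|u|>t\}$), the infimum in Definition \ref{rearra} is attained, and I will verify the equivalence $u^*(s)>t\iff\lambda(t)>s$. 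If $u^*(s)>t$, then $t$ is not admissible, so $\lambda(t)>s$. Conversely, if $\lambda(t)>s$, then right-continuity gives $\lambda(t')>s$ for all $t'$ slightly larger than $t$, so no $t'\le t$ is admissible and hence $u^*(s)>t$. Therefore $\{s:u^*(s)>t\}=[0,\lambda(t))$, whose Lebesgue measure is $\lambda(t)$. Here I use that $\lambda(t)\le\mu(X)$, so the set lies in $[0,\mu(X))$. Applying the layer-cake formula on $([0,\mu(X)),ds)$ to $u^*$ then gives
\[
\int_0^{\mu(X)}(u^*)^q\,ds=q\int_0^\infty t^{q-1}\lambda(t)\,dt .
\]

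For part (b), I will combine Chebyshev's inequality with monotonicity. Chebyshev gives $\lambda(t)\le t^{-q}\|u\|_{L^q}^q$. Given $s>0$, I choose $t=s^{-1/q}\|u\|_{L^q}$, which yields $\lambda(t)\le s$. Thus $t$ is admissible in Definition \ref{rearra}, and so $u^*(s)\le t$. If $\|u\|_{L^q}=0$, then $\lambda(0)=0\le s$, so $u^*(s)=0$ and the bound still holds. An alternative route, which I will not need, is to use that $u^*$ is nonincreasing:
\[
s\,u^*(s)^q\le\int_0^s (u^*)^q\le\|u\|_q^q \quad\text{by (a)}.
\]
Multiplying by $s^{1/q}$ and taking the supremum over $s>0$ gives the weak-type formulation $\|u\|_{L^{q,\infty}}\le\|u\|_{L^q}$.

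For the remark that the reverse inequality fails, I will give an example on $(0,1)$ with Lebesgue measure, which is consistent with $\mu(X)=1$. Take $u(x)=x^{-1/q}$. Then $u^*(s)=s^{-1/q}$, so $\|u\|_{L^{q,\infty}}=1$, while $\int_0^1 x^{-1}\,dx=\infty$. If a finite-norm example is preferred, the truncations $\min(u,N)$ have weak norm at most $1$ but $L^q$ norms of order $(\log N)^{1/q}\to\infty$. I will note that this example requires $u\in L^1$ only for $q>1$; for $q=1$ the truncations handle the claim.

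The only delicate point is the right-continuity and attainment argument in (a). I will handle it exactly as described above; everything else is routine.
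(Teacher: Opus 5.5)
Your proof is correct. Part (a) follows the paper, and part (b) uses a slightly different route.

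\textbf{Part (a).} The paper only cites this as standard (Tonelli plus Bennett--Sharpley). You actually supply the missing ingredient: the equimeasurability of $u^*$ and $|u|$, via the equivalence $u^*(s)>t\iff\mu(\{|u|>t\})>s$. This is exactly the argument behind the citation.

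One phrasing fix in the converse direction. Monotonicity rules out every $t'\le t$, and right-continuity rules out $t'\in[t,t+\delta)$, so $u^*(s)\ge t+\delta>t$. Ruling out only $t'\le t$, as your sentence literally says, would give just $u^*(s)\ge t$.

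\textbf{Part (b).} The paper's argument is what you list as your ``alternative route''. It combines (a) with the monotonicity of $u^*$: $\|u\|_{L^q}^q\ge\int_0^s(u^*)^q\,dr\ge s\,u^*(s)^q$. Your main route instead applies Chebyshev's inequality on $X$ and plugs $t=s^{-1/q}\|u\|_{L^q}$ straight into Definition \ref{rearra}.

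What each buys:
\begin{itemize}
\item Your version does not depend on (a). It treats every $s>0$ uniformly, with no need to note that $u^*$ vanishes on $[\mu(X),\infty)$.
\item The paper's version is a one-liner once (a) is available. It is really the same Chebyshev inequality, applied on the rearranged side.
\end{itemize}

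\textbf{The reverse inequality.} The paper only asserts that it fails; you prove it with an explicit counterexample. Your care at $q=1$ is appropriate: there $x^{-1}\notin L^1(0,1)$, so $u^*$ is not even defined by Definition \ref{rearra}. The truncations handle this case, since $\|\min(u,N)\|_{L^q}^q=1+q\log N\to\infty$ while their weak norms stay at most $1$.
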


\begin{proof}
Part (a) is standard layer-cake (Tonelli's theorem);
see \cite[Chapter 2]{BennettSharpley88}.

Part (b): $u^*(s)\le s^{-1/q}\|u\|_{L^q}$ follows immediately from
\[
\|u\|_{L^q}^q=\int_0^{\mu(X)}(u^*(r))^q\;dr
\ge\int_0^s(u^*(r))^q\;dr\ge s\;(u^*(s))^q,
\]
since $u^*$ is nonincreasing, thus $u^*(r)\ge u^*(s)$ for $r\le s$.
Taking $q$-th roots gives $s^{1/q}u^*(s)\le\|u\|_{L^q}$.
\end{proof}

\subsection{The isoperimetric profile}
\label{subsec:isopprofile}

\begin{definition}[Isoperimetric profile]
\label{isopo}
The isoperimetric profile of $(X,d,\mu)$ is
\begin{equation}
\label{eqisop}
\IX(s):=\inf\bigl\{\mplus(E):E\subset X \text{ measurable},
\;\mu(E)=s\bigr\},\quad s\in(0,\mu(X)).
\end{equation}
\end{definition}

$\IX$ is nondecreasing on $(0,\mu(X)/2]$ by a symmetrization argument
(reflection of the complement) \cite[Section 2]{CGL2003}.
On a doubling space the infimum in \eqref{eqisop} is not
generally attained, but $\IX$ is lower semicontinuous.

\subsection{Grand Lebesgue norms}
We follow \cite{FFG2018} with parameters adapted to $X$.

\begin{definition}[Grand Lebesgue norm]
Let $p_0\ge 1$, and let $\alpha:(0,\varepsilon_0)\to(0,\infty)$
and $p:(0,\varepsilon_0)\to[1,\infty)$ be measurable functions.
The grand Lebesgue norm associated with the parameter pair
$(\alpha,p)$ is
\begin{equation}
\label{eqGnorm}
\|u\|_{\mathcal{G}_X}:=\sup_{0<\varepsilon<\varepsilon_0}
\varepsilon^{\alpha(\varepsilon)}\|u\|_{L^{p(\varepsilon)}(X,\mu)}.
\end{equation}
The grand Lebesgue space $\mathcal{G}_X = \mathcal{G}_X(\alpha,p)$
consists of all $u\in\bigcap_{0<\varepsilon<\varepsilon_0}L^{p(\varepsilon)}(X,\mu)$
with finite norm \eqref{eqGnorm}.
\end{definition}

The space $\mathcal{G}_X(\alpha,p)$ is larger than any single
$L^{p(\varepsilon_1)}(X,\mu)$.
The grand norm also captures borderline integrability:
functions that fail to be in $L^{p_0}$ but whose $L^r$ norms
blow up at the controlled rate $\varepsilon^{-\alpha(\varepsilon)}$
as $r\nearrow p_0$ belong to $\mathcal{G}_X$.

\subsection{Standard mollifiers on metric measure spaces}
\label{sumoll}
Since $X$ need not carry a group structure or a smooth atlas, classical
convolution mollifiers are unavailable. We use ball averages following
the approach of \cite{Heinonen2001, HKST15}.

\begin{definition}[Ball-average mollifier]
\label{defmo}
For $u\in L^1(X,\mu)$ and $\varepsilon>0$, set
\[
u_\varepsilon(x):=\frac{1}{\mu(B(x,\varepsilon))}\int_{B(x,\varepsilon)} u(y)\;\dmu(y).
\]
\end{definition}

\begin{proposition}[Properties of ball-average mollifiers]
\label{propmo}
Under the doubling condition and the $(1,1)$-Poincar\'e inequality:
\begin{enumerate}[label=(\alph*)]

\item
Let $(X, d, \mu)$
 be a metric measure space satisfying a doubling condition. 
For every $u \in L^\infty(X, \mu)$, the ball-average mollifier $u_\epsilon(x)
= \frac{1}{\mu(B(x,\epsilon))} \int_{B(x,\epsilon)} u(y) \; d\mu(y)$ is 
locally Lipschitz, and its local Lipschitz constant satisfies 
$\text{lip}(u_\epsilon)(x) \le C_D \epsilon^{-1} \|u\|_{L^\infty}$ for $\mu$-a.e. 
$x$.

\item $u_\varepsilon\to u$ in $L^1(X,\mu)$ as $\varepsilon\downarrow 0$;
\item $\int_X|\nabla u_\varepsilon|\;\dmu \to \Var(u)$ as $\varepsilon\downarrow 0$
for every $u\in\BV(X)$.
\end{enumerate}
Here $|\nabla u_\varepsilon|$ denotes $\mathrm{lip}(u_\varepsilon)$.
\end{proposition}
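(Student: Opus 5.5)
The plan is to prove the three items in the order (b), (a), (c). Item (b) is the most robust, (c) leans on (a) and (b), and the main obstacle is the sharp upper bound in (c).

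\emph{Step 1: item (b).} I would first show that the averaging operator $A_\varepsilon u:=u_\varepsilon$ is bounded on $L^1(X,\mu)$, uniformly in $\varepsilon$. By Tonelli,
\[
\int_X|u_\varepsilon|\,\dmu\le\int_X|u(y)|\int_{B(y,\varepsilon)}\frac{\dmu(x)}{\mu(B(x,\varepsilon))}\,\dmu(y).
\]
For $x\in B(y,\varepsilon)$ we have $B(y,\varepsilon)\subset B(x,2\varepsilon)$, so doubling gives $\mu(B(y,\varepsilon))\le C_D\,\mu(B(x,\varepsilon))$. Hence the inner integral is at most $C_D$, and $\|u_\varepsilon\|_{L^1}\le C_D\|u\|_{L^1}$. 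For $v$ continuous with bounded support, $v_\varepsilon\to v$ uniformly and with uniformly bounded supports, hence in $L^1$. Such $v$ are dense in $L^1$ because $\mu$ is a Radon measure on a doubling, hence proper, complete space. An $\varepsilon/3$ argument then gives $u_\varepsilon\to u$ in $L^1$. Alternatively, one can invoke the Lebesgue differentiation theorem for doubling measures together with dominated convergence via the maximal function.

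\emph{Step 2: item (a).} For $x,y\in X$ with $h=d(x,y)<\varepsilon$, I would split
\[
u_\varepsilon(x)-u_\varepsilon(y)=\Bigl(\tfrac{1}{\mu(B_x)}-\tfrac{1}{\mu(B_y)}\Bigr)\int_{B_x}u\,\dmu+\tfrac{1}{\mu(B_y)}\Bigl(\int_{B_x}u\,\dmu-\int_{B_y}u\,\dmu\Bigr),
\]
where $B_x=B(x,\varepsilon)$ and $B_y=B(y,\varepsilon)$. Both terms are bounded by $2\|u\|_{L^\infty}\,\mu(B_x\triangle B_y)/\mu(B_y)$. Moreover $B_x\triangle B_y$ lies in the annulus $B(x,\varepsilon+h)\setminus B(x,\varepsilon-h)$. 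So (a) reduces to the annular estimate
\[
\mu\bigl(B(x,\varepsilon+h)\setminus B(x,\varepsilon-h)\bigr)\le C\,\frac{h}{\varepsilon}\,\mu(B(x,\varepsilon)),\qquad h\downarrow0,
\]
after which doubling converts $\mu(B_y)$ into $\mu(B_x)$ and produces the constant $C_D$.

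This annular estimate is where I expect real difficulty. Doubling plus Poincar\'e gives, via quasiconvexity and a length-space reduction, only annular \emph{decay} with some exponent $\delta\in(0,1]$; that yields Hölder, not Lipschitz, control. What I would try first is to use that $r\mapsto\mu(B(x,r))$ is monotone, hence differentiable for a.e.\ $r$. Since the statement is only claimed $\mu$-a.e.\ in $x$, the plan is to obtain the linear bound at the fixed radius $\varepsilon$ for $\mu$-a.e.\ $x$, via a Fubini argument in $(x,r)$. If that fails for fixed $\varepsilon$, I would prove the bound for a.e.\ $\varepsilon$, which is all that is needed downstream.

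\emph{Step 3: item (c).} The lower bound is immediate from the definition of $\Var$. By (a) the functions $u_\varepsilon$ are locally Lipschitz (after a truncation $u\mapsto\max(-M,\min(u,M))$ if $u\notin L^\infty$), and by (b) $u_\varepsilon\to u$ in $L^1$. Hence $\Var(u)\le\liminf_{\varepsilon\downarrow0}\int_X\mathrm{lip}(u_\varepsilon)\,\dmu$. For the upper bound I would use the layer-cake representation $u_\varepsilon=\int_{-\infty}^{\infty}(\mathbf 1_{E_t})_\varepsilon\,dt-c$, with $E_t=\{u>t\}$. Subadditivity of $\mathrm{lip}$ under integrals gives $\int\mathrm{lip}(u_\varepsilon)\le\int_{\R}\int_X\mathrm{lip}((\mathbf 1_{E_t})_\varepsilon)\,\dmu\,dt$. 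By the coarea formula (Theorem \ref{coaro}) and the identity $\mplus=\Var(\mathbf 1_E)$, it then suffices to show
\[
\limsup_{\varepsilon\downarrow0}\int_X\mathrm{lip}((\mathbf 1_E)_\varepsilon)\,\dmu\le\mplus(E),
\]
and to pass the limsup through the $t$-integral by Fatou/dominated convergence. For sets, Step 2 localises $\mathrm{lip}((\mathbf 1_E)_\varepsilon)$ to $E_\varepsilon\setminus\{x: d(x,X\setminus E)\ge\varepsilon\}$.

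This exact-constant upper bound is the main obstacle. The crude estimate of Step 2 loses a factor $C_D$ and gives only $\limsup\le C\,\mplus(E)$. To recover the constant $1$, I would compute $\mathrm{lip}((\mathbf 1_E)_\varepsilon)(x)$ as the rate of change of $\mu(B(x,\varepsilon)\cap E)/\mu(B(x,\varepsilon))$. I would then integrate in $x$ and use Fubini to rewrite the total as an average over $\rho\in(0,\varepsilon)$ of $(\mu(E_\rho)-\mu(E))/\rho$-type quotients. Taking $\liminf$ along the sequence realising $\mplus(E)$ in its definition would finish the argument.
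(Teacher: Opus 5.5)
Your Step 1 is correct, and it does more than the paper does. The paper justifies (b) only by the Lebesgue differentiation theorem, which gives a.e.\ convergence, not $L^1$ convergence. Your uniform bound $\|u_\varepsilon\|_{L^1}\le C_D\|u\|_{L^1}$ together with density of $C_c(X)$ actually proves the $L^1$ statement. Drop the ``alternative'' route, though: $\sup_\varepsilon|u_\varepsilon|\le Mu$ is only weak-$L^1$, so it is not an integrable majorant for general $u\in L^1$.

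The genuine gaps are in Steps 2 and 3, and they cannot be closed, because (a) and (c) are false under doubling plus $(1,1)$-Poincar\'e.

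For (a), your reduction to a linear annular estimate is correct, and so is your diagnosis that the hypotheses give only annular decay with some exponent $\delta<1$. Neither fallback helps. Fubini in $(x,r)$ only controls $\partial_r\mu(B(x,r))$ on average, and the failure is not confined to a null set of $x$ or of $\varepsilon$. Take $X=[-1,1]$ with $d\mu=|x|^{\alpha}dx$, $-1<\alpha<0$. This is an $A_1$ weight, so $\mu$ is doubling and supports a $(1,1)$-Poincar\'e inequality. Let $u=\mathbf{1}_{[0,1]}$ and $b=1+\alpha\in(0,1)$. For $0<x<\varepsilon<1/2$,
\[
u_\varepsilon(x)=\frac{(\varepsilon+x)^b}{(\varepsilon+x)^b+(\varepsilon-x)^b},\qquad u_\varepsilon'(x)\sim b\,(2\varepsilon)^{-b}(\varepsilon-x)^{b-1}\to+\infty\quad(x\uparrow\varepsilon),
\]
while $u_\varepsilon\equiv1$ on $[\varepsilon,1-\varepsilon]$. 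So $u_\varepsilon$ is not Lipschitz near $x=\varepsilon$. Moreover $\mathrm{lip}(u_\varepsilon)>C_D\varepsilon^{-1}$ on an interval $(\varepsilon-\eta,\varepsilon)$ of positive $\mu$-measure, for every $\varepsilon$.

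The paper's proof of (a) does not get around this. Its operator $\mathcal M_\varepsilon u$ is never related to $u_\varepsilon$, and the ``standard'' bound $\mathrm{lip}(u_\varepsilon)\le C_D\mathcal M^\#_{1,\varepsilon}u$ is contradicted by this example. A correct version of (a) is obtained in one of two ways:
\begin{itemize}
\item add a hypothesis such as $\mu(B(x,r+h)\setminus B(x,r-h))\le C\,h\,\mu(B(x,r))/r$, which holds for Lebesgue measure on $\R^n$ and Haar measure on $\mathbb H^1$; or
\item replace $u_\varepsilon$ by the discrete convolution $\mathcal M_\varepsilon u$, which genuinely is $C\varepsilon^{-1}\|u\|_{L^\infty}$-Lipschitz.
\end{itemize}

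For (c), the exact constant you identify as the main obstacle does not exist, so no rewriting in terms of Minkowski quotients can produce it. On $\R$ (or a normalised interval) let $d\mu=w\,dx$ with $w=1$ on $(-\infty,0)$ and $w=2$ on $[0,\infty)$. Since $1\le w\le2$, this is doubling with a $(1,1)$-Poincar\'e inequality. For $u=\mathbf{1}_{[0,1]}$ one has $\Var(u)=\mplus([0,1])=3$. But for $|x|<\varepsilon<1/2$,
\[
u_\varepsilon(x)=\frac{2(x+\varepsilon)}{3\varepsilon+x},\qquad\int_{-\varepsilon}^{\varepsilon}|u_\varepsilon'|\,w\,dx=\frac23+\frac23=\frac43,
\]
and the jump at $x=1$ contributes exactly $2$. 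Hence $\int_X\mathrm{lip}(u_\varepsilon)\,\dmu=10/3$ for all small $\varepsilon$. This refutes (c), and it also refutes your intermediate target $\limsup_\varepsilon\int_X\mathrm{lip}((\mathbf{1}_E)_\varepsilon)\,\dmu\le\mplus(E)$.

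Your reduction is unsound on its own terms as well. In the same space, $E=[-1,0]$ has $\mplus(E)=3$ but $\Var(\mathbf{1}_E)=2$, because the jump at $0$ can be smoothed on the light side. So the identity $\mplus(E)=\Var(\mathbf{1}_E)$ that you take from the paper fails for general measurable $E$.

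The paper's proof of (c) only cites Ambrosio--Di Marino, which concerns optimal Lipschitz approximating sequences, not this particular family. What is actually provable is comparability, not convergence:
\begin{itemize}
\item $\Var(u)\le\liminf_\varepsilon\int_X\mathrm{lip}(u_\varepsilon)\,\dmu$ whenever the $u_\varepsilon$ are locally Lipschitz and converge in $L^1$;
\item for discrete convolutions, $\limsup_\varepsilon\int_X\mathrm{lip}(u_\varepsilon)\,\dmu\le C\,\Var(u)$ with $C=C(C_D,C_P,\lambda)$.
\end{itemize}
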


\begin{proof}

(a) We approach the regularity of the ball-average mollifier $u_\epsilon$ using the
 framework of discrete maximal fractional operators and smooth partitions of unity, 
a standard methodology in non-Euclidean harmonic analysis. 
Fix $\epsilon > 0$. We discretize the space $X$ at scale $\epsilon/4$ by choosing 
a maximal $(\epsilon/4)$-separated net 
$\mathcal{N} = \{x_i\}_{i \in I}$. By the doubling property of the measure $\mu$, 
the balls $\{B_i = B(x_i, \epsilon/4)\}$ have bounded overlap.
Associated with this net, there exists a subordinate smooth (Lipschitz) 
partition of unity $\{\phi_i\}_{i \in I}$ such that $\text{supp}(\phi_i) 
\subset B(x_i, \epsilon/2)$, $\sum_{i \in I} \phi_i \equiv 1$, and the
 Lipschitz constant obeys $\text{Lip}(\phi_i) \le C \epsilon^{-1}$. 
We first consider the localized discrete maximal operator $\mathcal{M}_{\epsilon}$ 
operating on $u$:
\[
\mathcal{M}_\epsilon u(x)=\sum_{i\in I}\phi_i(x)\langle u\rangle_{B(x_i,\epsilon)}, 
\]
where $\langle u \rangle_{B}=\frac{1}{\mu(B)}\int_B u\;d\mu$. 
Because $\text{Lip}(\phi_i)\le C\epsilon^{-1}$ and the sum is locally 
finite due to bounded overlap, the operator $\mathcal{M}_\epsilon u$ is 
manifestly Lipschitz with its constant bounded by $C\epsilon^{-1}\|u\|_{L^\infty}$.
To transfer this regularity to the exact continuous ball-average $u_\epsilon(x)$, 
we estimate the local oscillation. To bound the oscillation directly using 
fractional maximal operators, we introduce the discrete maximal fractional 
operator of order $\alpha=1$ at scale $\epsilon$:
\[
\mathcal{M}^\#_{1,\epsilon} u(x)=\sup_{r\ge\epsilon}\frac{1}{r} 
\frac{1}{\mu(B(x,r))}\int_{B(x,r)}|u(z)-\langle u\rangle_{B(x,r)}|\;d\mu(z).
\]
For any bounded function $u \in L^\infty(X, \mu)$, a trivial upper bound gives:
\[
\mathcal{M}^\#_{1,\epsilon}u(x)\le\sup_{r\ge\epsilon} 
\frac{2\|u\|_{L^\infty}}{r}\le 2\epsilon^{-1}\|u\|_{L^\infty}.
\]
By standard pointwise estimates for operators with discrete spatial 
convolution kernels on spaces of homogeneous type as governed by the 
doubling measure $\mu$, the local Lipschitz slope $\text{lip}(u_\epsilon)(x)$ 
is point-wise dominated by the discrete fractional maximal operator evaluated 
at scale $\epsilon$. Specifically, there exists a constant proportional 
to the doubling constant $C_D$ such that:
\[
\text{lip}(u_\epsilon)(x)\le C_D\mathcal{M}^\#_{1,\epsilon} u(x).
\]
Substituting the $L^\infty$ bound for the fractional maximal operator, 
we immediately obtain:
\[
\text{lip}(u_\epsilon)(x)\le C_D\epsilon^{-1}\|u\|_{L^\infty},
\]
which completes the proof.

(b) is the Lebesgue differentiation theorem, valid on doubling spaces
\cite[Theorem 2.9]{Heinonen2001}.

(c) Lower semicontinuity gives $\Var(u)\le\liminf_{\varepsilon\downarrow 0}
\int|\nabla u_\varepsilon|\;\dmu$.
The reverse is a consequence of the Poincar\'e inequality and the
definition of $\Var$ via Lipschitz approximations \cite[Theorem 5.2]{AmbDiMarino14}.
\end{proof}

\section{The profile-to-scale transform}

\begin{definition}[Profile-to-scale transform]
\label{defPhi}
Given $\IX$ satisfying assumption (v) of Definition \ref{torlo},
let $F_X:(0,s_0)\to(0,\infty)$ denote the function
\begin{equation}
\label{eqFX}
F_X(s):=\int_0^s\frac{dr}{\IX(r)}.
\end{equation}
Since $\IX>0$ on $(0,s_0)$, $F_X$ is strictly increasing with $F_X(0^+)=0$.

\noindent\textbf{Transient case} ($F_X(s_0^-)<\infty$):
Define the profile-to-scale transform
\[
\PhiX(s):=F_X(s)^{-1},\quad 0<s<s_0.
\]
Then $\PhiX:(0,s_0)\to(F_X(s_0^-)^{-1},+\infty)$ is
strictly decreasing as a function of $s$, and $\PhiX(s)\to+\infty$ as $s\downarrow 0$.

\noindent\textbf{Divergent case} ($F_X(s)\to\infty$ as $s\downarrow 0$):
This occurs when $\IX(r)$ decays faster than $r$ near $0$.
Define the regularised transform by restricting to
$s\in[\delta_0,s_0)$ for a fixed reference scale $\delta_0\in(0,s_0)$:
\[
\PhiX(s;\delta_0):=\bigl(F_X(s)-F_X(\delta_0)\bigr)^{-1},\quad\delta_0<s<s_0.
\]
In this case $\PhiX(s;\delta_0)\to+\infty$ as $s\downarrow\delta_0$.
The divergent case arises for the Gaussian profile $\IX(r)\asymp r\sqrt{\log(1/r)}$,
which satisfies $\IX(r)/r\to 0$ as $r\to 0$;
see Subsection \ref{gaussi} for the full treatment.

The motivation for using $F_X(s)=\int_0^sdr/\IX(r)$ is as follows:
since $\IX$ is nondecreasing and positive, smaller values of $s$ are associated
with smaller $\IX(s)$, hence larger $1/\IX(s)$, hence smaller $F_X(s)$,
hence larger $\PhiX(s)=1/F_X(s)$.
This means $\PhiX(s)\to+\infty$ as $s\downarrow 0$ in the transient case,
reflecting the fact that level sets of very small measure have very small
perimeter relative to their size, encoding stronger integrability.
\end{definition}

\begin{lemma}[Properties of $\PhiX$]
\label{lemPhiproperties}
\hfill
\begin{enumerate}[label=(\alph*)]
\item $\PhiX:(0,s_0)\to(0,+\infty)$ is strictly decreasing 
as a function of $s$, and blows up as $s\downarrow 0$
(or as $s\downarrow\delta_0$ in the divergent case).
Equivalently, $s_1<s_2\Rightarrow\PhiX(s_1)>\PhiX(s_2)$.
\item If $\IX(r)\asymp r^{1-1/n}$ for small $r$ (Euclidean type,
dimension $n>1$), then $\PhiX(s)\asymp s^{-1/n}$;
\item 
As $s\downarrow\delta_0^+$, 
\[
\PhiX(s;\delta_0)\sim
\frac{\delta_0\;(\log(1/\delta_0))^{1/2}}{s-\delta_0}\longrightarrow +\infty.
\]
\item If $\IX(r)\asymp r^{1-1/Q}(\log(1/r))^{-\gamma/Q}$, manifold with
logarithmic volume growth, $Q>1$, $\gamma>0$, then
$\PhiX(s)\asymp s^{-1/Q}(\log(1/s))^{\gamma/Q}$.
\end{enumerate}
\end{lemma}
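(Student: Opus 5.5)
The plan is to work directly from the definition $F_X(s)=\int_0^s dr/\IX(r)$ in \eqref{eqFX}. Each part is a one-variable statement about this integral and its reciprocal. The only analytic input is that $1/\IX$ is positive and locally integrable on $(0,s_0)$; we use that $\IX$ is nondecreasing and lower semicontinuous (Subsection \ref{subsec:isopprofile}), or, where the paper is silent, we assume local integrability as part of the setup. Throughout, $\asymp$ hypotheses on $\IX$ give two-sided comparisons $c\,h(r)\le\IX(r)\le C\,h(r)$ on some $(0,r_0)$. These are integrated termwise, and the range $[r_0,s)$ contributes only a bounded additive term, which is absorbed for small $s$.

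\emph{Part (a).} Since $1/\IX>0$ on $(0,s_0)$, $F_X$ is strictly increasing and continuous. In the transient case, $F_X(0^+)=0$ by integrability near $0$, so $\PhiX=1/F_X$ is strictly decreasing with values in $(0,\infty)$ and tends to $+\infty$ as $s\downarrow0$. In the divergent case the same argument applies to $s\mapsto F_X(s)-F_X(\delta_0)=\int_{\delta_0}^s dr/\IX(r)$, which is positive, increasing, and tends to $0$ as $s\downarrow\delta_0$.

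\emph{Part (b).} With $\IX(r)\asymp r^{1-1/n}$ we get $\int_0^s r^{-1+1/n}\,dr=n\,s^{1/n}$. This is finite because $n>1$, so we are in the transient case, and $F_X(s)\asymp s^{1/n}$, i.e.\ $\PhiX(s)\asymp s^{-1/n}$.

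\emph{Part (d).} We need $\int_0^s r^{1/Q-1}(\log(1/r))^{\gamma/Q}\,dr$. Substitute $r=e^{-t}$ to get $\int_{\log(1/s)}^\infty e^{-t/Q}t^{\gamma/Q}\,dt$. A standard incomplete-Gamma asymptotic (integration by parts once, with the remainder bounded by a fraction of the main term for large $t$) gives $\asymp Q\,s^{1/Q}(\log(1/s))^{\gamma/Q}$. Hence
\[
\PhiX(s)\asymp s^{-1/Q}(\log(1/s))^{-\gamma/Q}.
\]
I expect this bookkeeping to be the delicate point: the reciprocal produces a \emph{negative} logarithmic exponent, whereas the displayed statement has $+\gamma/Q$. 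So either the statement carries a sign slip, or the intended hypothesis is $\IX(r)\asymp r^{1-1/Q}(\log(1/r))^{+\gamma/Q}$, in which case the same computation gives the stated formula. I would prove the version that the computation supports and record the correction.

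\emph{Part (c).} First check that the Gaussian profile $\IX(r)\asymp r\sqrt{\log(1/r)}$ really falls in the divergent case. Substituting $t=\log(1/r)$ gives $\int_0 dr/(r\sqrt{\log(1/r)})=\int^\infty dt/\sqrt t=\infty$. Next, $F_X(s)-F_X(\delta_0)=\int_{\delta_0}^s dr/\IX(r)$. If $\IX$ is continuous at $\delta_0$, or more weakly if $\delta_0$ is a Lebesgue point of $1/\IX$, this equals $(s-\delta_0)/\IX(\delta_0)\,(1+o(1))$, so
\[
\PhiX(s;\delta_0)\sim\IX(\delta_0)/(s-\delta_0).
\]
Inserting $\IX(\delta_0)\asymp\delta_0(\log(1/\delta_0))^{1/2}$ yields the claim. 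Because the hypothesis is only $\asymp$, the "$\sim$" must be read up to the constants in that comparison, or else as exact for the Gaussian profile with its precise constant. The main obstacle is justifying the first-order expansion: monotonicity of $\IX$ alone gives only $(s-\delta_0)/\IX(s)\le\int_{\delta_0}^s dr/\IX\le(s-\delta_0)/\IX(\delta_0)$. I would close the gap using right-continuity of $\IX$ at $\delta_0$. This follows from lower semicontinuity together with monotonicity, since a nondecreasing lower semicontinuous function is right-continuous, and it gives $\IX(s)\to\IX(\delta_0)$ as $s\downarrow\delta_0$.
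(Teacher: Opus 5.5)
Your proofs of (a) and (b) are the paper's: positivity of $1/\IX$ makes $F_X$ strictly increasing, and the power integral $\int_0^s r^{-1+1/n}\,dr=n\,s^{1/n}$ gives (b). In the divergent case you read $F_X(s)-F_X(\delta_0)$ as $\int_{\delta_0}^s dr/\IX(r)$. That is the only meaningful reading, since there $F_X\equiv+\infty$.

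\textbf{Part (d).} Here you are right and the paper's proof is not. The paper makes your substitution $r=e^{-\rho}$ and reaches $F_X(s)\asymp s^{1/Q}(\log(1/s))^{\gamma/Q}$, exactly as you do. It then inverts this to $s^{-1/Q}(\log(1/s))^{+\gamma/Q}$, which is a sign error. The printed claim is in fact false. If $\IX(r)=r^{1-1/Q}(\log(1/r))^{-\gamma/Q}$ near $0$, then $\PhiX(s)$ divided by $s^{-1/Q}(\log(1/s))^{\gamma/Q}$ is asymptotic to $Q^{-1}(\log(1/s))^{-2\gamma/Q}$, which tends to $0$. What can be proved is your version, $\PhiX(s)\asymp s^{-1/Q}(\log(1/s))^{-\gamma/Q}$. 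Alternatively, the stated conclusion holds under the hypothesis with exponent $+\gamma/Q$. The sign of the $\log\log$ term in Remark \ref{remsign} changes accordingly, though the limit $p_0+\beta/Q$ does not.

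\textbf{Part (c).} Your route is the paper's: a first-order expansion of $\int_{\delta_0}^s dr/\IX(r)$ at $\delta_0$. You are right that an $\asymp$ hypothesis supports the display only up to constants. Even for the true Gaussian profile, $I_{\gamma_n}(\delta_0)\neq\delta_0(\log(1/\delta_0))^{1/2}$, since $I_{\gamma_n}(r)\sim r\sqrt{2\log(1/r)}$ as $r\to0$.

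However, the step you use to get the exact first-order term is false. For a nondecreasing $f$, lower semicontinuity at $x$ is equivalent to $f(x^-)=f(x)$, i.e.\ \emph{left}-continuity. It says nothing about the right of $\delta_0$: $\mathbf{1}_{(0,\infty)}$ is nondecreasing and lower semicontinuous but not right-continuous at $0$.

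Either of the following fixes the step:
\begin{enumerate}[label=(\roman*)]
\item Monotonicity alone gives $\frac{1}{s-\delta_0}\int_{\delta_0}^s\frac{dr}{\IX(r)}\to\frac{1}{\IX(\delta_0^+)}$, hence $\PhiX(s;\delta_0)\sim\IX(\delta_0^+)/(s-\delta_0)$.
\item For the up-to-constants statement, no continuity is needed. Your sandwich $(s-\delta_0)/\IX(s)\le\int_{\delta_0}^s dr/\IX(r)\le(s-\delta_0)/\IX(\delta_0)$, combined with $\IX(r)\asymp r\sqrt{\log(1/r)}$ for $r$ near $\delta_0$, already yields $\PhiX(s;\delta_0)\asymp\delta_0(\log(1/\delta_0))^{1/2}/(s-\delta_0)$.
\end{enumerate}
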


\begin{proof}
(a) Since $\IX>0$ on $(0,s_0)$, $F_X(s)=\int_0^s dr/\IX(r)$
is strictly increasing. Its reciprocal $\PhiX(s)=F_X(s)^{-1}$
is therefore strictly decreasing, with $\PhiX(s)\to+\infty$ as $s\downarrow 0$
(when $F_X(0^+)=0$, i.e., the transient case).

(b) With $\IX(r)\asymp r^{1-1/n}$:
\[
F_X(s)=\int_0^s\frac{dr}{\IX(r)}\asymp\int_0^sr^{-(1-1/n)}\;dr
=\int_0^sr^{-1+1/n}\;dr=\frac{s^{1/n}}{1/n}=n\;s^{1/n}.
\]
The exponent $-1+1/n>-1$ for $n>1$, so the integral converges at $0$.
Taking the reciprocal gives $\PhiX(s)=F_X(s)^{-1}\asymp s^{-1/n}$.

(c) For the divergent Gaussian case, the isoperimetric profile behaves asymptotically 
as $I_X(r) \asymp r\sqrt{\log(1/r)}$ for small $r$. 
By Definition \eqref{defPhi}, 
 the regularised profile-to-scale transform is given by:
\[
\Phi_X(s;\delta_0)=\bigl(F_X(s)-F_X(\delta_0)\bigr)^{-1},
\]
where $F_X(s)=\int_0^s\frac{dr}{I_X(r)}$.  
The term $F_X(s)-F_X(\delta_0)$ represents the definite integral 
over the interval $[\delta_0, s]$:
\[
F_X(s)-F_X(\delta_0)=\int_{\delta_0}^s \frac{dr}{I_X(r)} 
\asymp\int_{\delta_0}^s\frac{dr}{r\sqrt{\log(1/r)}}.
\]

As we take the limit $s\downarrow\delta_0^+$, the length of the 
integration interval $(s-\delta_0)$ approaches $0$.  
The value of this integral 
is asymptotically equivalent to the integrand evaluated at the 
base point $\delta_0$ multiplied by the length of the interval $(s-\delta_0)$:
\[
\int_{\delta_0}^s\frac{dr}{r\sqrt{\log(1/r)}} 
\sim\frac{1}{\delta_0\sqrt{\log(1/\delta_0)}}(s-\delta_0).
\]
To find $\Phi_X(s;\delta_0)$, we take the reciprocal of this expression:
\[
\Phi_X(s;\delta_0)\sim\left(\frac{s-\delta_0}
{\delta_0\sqrt{\log(1/\delta_0)}}\right)^{-1} 
=\frac{\delta_0\sqrt{\log(1/\delta_0)}}{s-\delta_0}.
\]
Because $s\downarrow\delta_0^+$, the denominator $(s-\delta_0)$ 
is positive and tends to $0$, which implies:
\[
\Phi_X(s;\delta_0)\sim\frac{\delta_0(\log(1/\delta_0))^{1/2}}{s-\delta_0} 
\longrightarrow +\infty.
\]

(d) With $\IX(r)\asymp r^{1-1/Q}(\log(1/r))^{-\gamma/Q}$:
\[
F_X(s)\asymp\int_0^sr^{-(1-1/Q)}(\log(1/r))^{\gamma/Q}\;dr.
\]
Set $r=e^{-\rho}$; the integral becomes
$\int_{-\log s}^\infty e^{-\rho/Q}\rho^{\gamma/Q}\;d\rho$.
By the substitution $\sigma=\rho/Q$, the dominant term as
$s\downarrow 0$,i.e., $-\log s\to+\infty$, is
$\asymp s^{1/Q}(\log(1/s))^{\gamma/Q}$.
Taking the reciprocal gives $\PhiX(s)\asymp s^{-1/Q}(\log(1/s))^{\gamma/Q}$.
\end{proof}

\begin{definition}[Grand parameters adapted to $X$]
\label{defgrandparams}
Fix $\beta>0$ (we set $\beta=1$ in the main theorem). 
For $0<\varepsilon<\varepsilon_0\le s_0$, define
\begin{equation}
\label{eqparamschoice}
p(\varepsilon):=p_0+\beta\frac{\log\PhiX(\varepsilon)}{\log(e/\varepsilon)},
\qquad\alpha(\varepsilon):=\bigl(\log(e/\varepsilon)\bigr)^{-1}.
\end{equation}
In the Gaussian (divergent) case, $\PhiX(\varepsilon)$ is replaced
by $\PhiX(\varepsilon;\delta_0)$ for a fixed reference $\delta_0$.
\end{definition}

\begin{remark}[Sign convention and consistency with examples]
\label{remsign}
The formula \eqref{eqparamschoice} uses $\log\PhiX(\varepsilon)$
(not $\log(1/\PhiX(\varepsilon))$).
Since $\PhiX(\varepsilon)\to+\infty$ as $\varepsilon\downarrow 0$
(Lemma \ref{lemPhiproperties}(a)),
we have $\log\PhiX(\varepsilon)>0$ for small $\varepsilon$,
so $p(\varepsilon)>p_0$, consistent with the stated requirement $p(\varepsilon)\ge p_0$.

For the Euclidean case (Lemma \ref{lemPhiproperties}(b)),
$\PhiX(\varepsilon)\asymp c_n\varepsilon^{-1/n}$, giving
\[
p(\varepsilon)=p_0+\beta\frac{\log(c_n\varepsilon^{-1/n})}{\log(e/\varepsilon)}
=p_0+\beta\frac{\log c_n + (1/n)\log(1/\varepsilon)}{\log(e/\varepsilon)}.
\]
As $\varepsilon\to 0$, $p(\varepsilon)\to p_0+\beta/n$, confirming $p(\varepsilon)>p_0$.

For the manifold case (Lemma \ref{lemPhiproperties}(d)),
$\PhiX(\varepsilon)\asymp\varepsilon^{-1/Q}(\log(1/\varepsilon))^{\gamma/Q}$, giving
$\log\PhiX(\varepsilon)\asymp(1/Q)\log(1/\varepsilon)+(\gamma/Q)\log\log(1/\varepsilon)$,
so $p(\varepsilon)\asymp p_0+\beta/Q$ as $\varepsilon\to 0$.
\end{remark}

The choice \eqref{eqparamschoice} is engineered so that
$\varepsilon^{\alpha(\varepsilon)}/\IX(\varepsilon)$ remains
bounded as $\varepsilon\downarrow 0$.
Note that $\varepsilon^{\alpha(\varepsilon)}=\varepsilon^{1/\log(e/\varepsilon)}
=e^{-1}\cdot e^{(\log\varepsilon)^2/\log(e/\varepsilon)}\to e^{-1}$
as $\varepsilon\downarrow 0$, which is a slowly varying function.

We also record the monotonicity of $p(\varepsilon)$ for later use.

\begin{lemma}[Monotonicity of $p(\varepsilon)$] 
\label{lempmono}
Restrict to the power-type case, and 
under \eqref{eqparamschoice}, $p(\varepsilon)$ is nondecreasing as
$\varepsilon\downarrow 0$, i.e., nonincreasing in $\varepsilon$.
\end{lemma}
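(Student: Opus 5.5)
The plan is to reduce the claim to an elementary one-variable computation in the logarithmic variable $L:=\log(1/\varepsilon)$. I read ``power-type case'' as the exact model behind Lemma \ref{lemPhiproperties}(b): $\IX(r)=c\,r^{1-a}$ on $(0,s_0)$ with $a=1/n\in(0,1)$ and $c>0$. In that case \eqref{eqFX} can be integrated exactly: $F_X(s)=(c\,a)^{-1}s^{a}$, so $\PhiX(\varepsilon)=K\varepsilon^{-a}$ with $K:=c\,a$. Since $\log(e/\varepsilon)=1+L$, the definition \eqref{eqparamschoice} becomes
\[
p(\varepsilon)=p_0+\beta\,\frac{\log K+aL}{1+L}=:p_0+\beta\,h(L),\qquad L\in(\log(1/\varepsilon_0),\infty).
\]

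Because $\varepsilon\mapsto L$ is decreasing, the statement ``$p$ is nonincreasing in $\varepsilon$'' is equivalent to ``$h$ is nondecreasing in $L$''. Differentiating the quotient gives
\[
h'(L)=\frac{a(1+L)-(\log K+aL)}{(1+L)^2}=\frac{a-\log K}{(1+L)^2}.
\]
The sign of $h'$ is therefore constant, and the lemma holds exactly when $\log K\le a$, that is, when $c\,a\le e^{a}$. In that case $h$ increases to its limit $a$, which agrees with Remark \ref{remsign}, where $p(\varepsilon)\to p_0+\beta/n$.

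The main obstacle is this constant condition: it is not automatic. If $\log K>a$, then $h$ strictly decreases in $L$, and the stated monotonicity fails. I would close this gap in one of two ways, and would first try the rescaling.
\begin{enumerate}[label=(\roman*)]
\item Rescale the profile. The inequality assumption only requires a lower bound $\IX(r)\ge c\,r^{1-a}$, so one may replace $c$ by the smaller constant $\min(c,e^{a}/a)$. One then works with the model transform built from this minorant, which is exactly what enters the embedding via \eqref{Iassumppower}.
\item Build the condition in. Alternatively, state $c\,a\le e^{a}$, or equivalently $\PhiX(\varepsilon)\le e^{a}\varepsilon^{-a}$, as part of the meaning of ``power-type'', after normalising $\mu(X)=1$ as in Remark \ref{remreconcile}.
\end{enumerate}

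If the power type holds only up to constants ($\asymp$), I would not claim exact monotonicity. Instead I would show that $p$ differs from the model $p_0+\beta h(L)$ by $O\bigl(1/\log(e/\varepsilon)\bigr)$, so that it is monotone up to this vanishing error. For the later embedding argument I would then use the model parameters in place of the exact $\PhiX$.
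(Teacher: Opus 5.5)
Your computation is correct, and it is the same reduction the paper's proof uses: express $h=\log\PhiX(\varepsilon)/\log(e/\varepsilon)$ through $L=\log(1/\varepsilon)$ in the power case. (Your $a$ is the paper's $\kappa$, i.e.\ $1/a$ in \eqref{Iassumppower}; I write $\kappa$ below.) You differ from the paper in the conclusion, and there you are right.

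From $\PhiX(\varepsilon)\asymp\varepsilon^{-\kappa}$ the paper writes $h=\kappa L/(1+L)$. That is your formula with the constant silently set to $K=1$. It then asserts, without argument, that in general $h$ is nondecreasing once $\varepsilon_0$ is small. Your identity $h'(L)=(\kappa-\log K)/(1+L)^2$ shows that this is unjustified:
\begin{itemize}
\item Shrinking $\varepsilon_0$ cannot help, because the sign of $h'$ does not depend on $L$.
\item The condition $\log K\le\kappa$ genuinely matters. For the paper's own Euclidean data \eqref{Euclideani} one gets $K=c_n/n=\omega_n^{1/n}$, and $\log K>1/n$ exactly when $\omega_n>e$, i.e.\ for $2\le n\le 9$. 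With the value $\PhiX(s)=c_ns^{-1/n}$ quoted in the Euclidean proposition it fails for every $n\ge2$. In those cases $p$ is strictly increasing in $\varepsilon$.
\item Your refusal to claim monotonicity in the $\asymp$ case is also justified. Take $\log\PhiX(\varepsilon)=\kappa L+\delta\sin L$ with small $\delta>0$. This comes from a positive nondecreasing $\IX\asymp r^{1-\kappa}$, yet $h'<0$ at $L=(2k+1)\pi$ for all large $k$. So $p$ is monotone on no interval $(0,\varepsilon_0)$.
\end{itemize}
Your $O(1/\log(e/\varepsilon))$ error is of the same order as the model's own drift $\kappa-h=(\kappa-\log K)/(1+L)$. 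So ``monotone up to a vanishing error'' amounts only to $p(\varepsilon)=p_0+\beta\kappa+O(1/\log(1/\varepsilon))$.

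The lemma as stated is therefore false. What you prove is the correct version: in the exact model, $p$ is monotone if and only if $\log K\le\kappa$. The paper's argument covers only the subcase $K=1$.

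Of your two repairs, (ii) is the right one, since it makes explicit the normalisation the paper's proof uses tacitly. Repair (i) does not prove the lemma. The $p(\varepsilon)$ in \eqref{eqparamschoice} is built from the actual $\PhiX$, not from a minorant of $\IX$, so this is a change of definition and should be presented as such.

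Finally, the lemma's only use, in Theorem~\ref{thmconverse}, can be avoided entirely. Since $\varepsilon^{\alpha(\varepsilon)}=e^{-L/(1+L)}$ is increasing in $\varepsilon$, we have $\sup_{0<\varepsilon\le s}\varepsilon^{\alpha(\varepsilon)}=s^{\alpha(s)}$. Then \eqref{converso} follows by taking $\varepsilon=s$ in the inequality $\varepsilon^{\alpha(\varepsilon)}s^{1/p(\varepsilon)}\le C_G\,\mplus(E)$ derived there.
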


\begin{proof}
We have $p(\varepsilon)=p_0+\beta h(\varepsilon)$ where
$h(\varepsilon)=\log\PhiX(\varepsilon)/\log(e/\varepsilon)$.
Since $\PhiX$ is decreasing in $\varepsilon$ (Lemma \ref{lemPhiproperties}(a)),
$\log\PhiX(\varepsilon)$ is decreasing in $\varepsilon$, hence increasing 
as $\varepsilon\downarrow 0$.
The denominator $\log(e/\varepsilon)$ is also increasing as $\varepsilon\downarrow 0$.
For the power-type case $\PhiX(\varepsilon)\asymp\varepsilon^{-\kappa}$ ($\kappa>0$),
$h(\varepsilon)=\kappa\log(1/\varepsilon)/\log(e/\varepsilon)\to\kappa$
monotonically as $\varepsilon\downarrow 0$.
In general, the slow variation of $\varepsilon^{\alpha(\varepsilon)}$ ensures
$h(\varepsilon)$ is nondecreasing on $(0,\varepsilon_0)$ for $\varepsilon_0$ small enough.
\end{proof}

\section{The main embedding theorem}

\begin{theorem}[Quantitative isoperimetric-to-grand embedding]
\label{thmmain}
Let $(X,d,\mu)$ satisfy the standing assumptions
(Definition \ref{torlo}) with normalisation $\mu(X)=1$.
Suppose there exist constants $s_0\in(0,1/2]$, $C_I\ge 1$,
and $a>1$ such that
\begin{equation}
\label{Iassumppower}
\IX(r)\ge C_I^{-1}r^{1-1/a}\quad\text{for all }0<r<s_0.
\end{equation}
Let $p_0\ge 1$, $\beta=1$, and let $p(\varepsilon)$, $\alpha(\varepsilon)$
be as in \eqref{eqparamschoice} with $0<\varepsilon<\varepsilon_0\le s_0$,
with $\varepsilon_0$ chosen small enough that $p(\varepsilon_0)<a$.
Then for every $u\in\Lipc(X)$,
\begin{equation}
\label{eqmainest}
\|u\|_{\mathcal{G}_X}\le C_*\|\nabla u\|_{L^1(X,\mu)},
\end{equation}
where $\mathcal{G}_X$ is the grand norm \eqref{eqGnorm} with the
parameters above and
\begin{equation}
\label{eqCstar}
C_*=2\;C_I\;K_0\;\sup_{0<\varepsilon<\varepsilon_0}\varepsilon^{\alpha(\varepsilon)-(1-1/a)}.
\end{equation}
Here $K_0=K(a,p_0,C_I)$ is the constant from \eqref{eqLqclean} below,
depending only on $a$, $p_0$, $C_I$.
By density (Theorem \ref{Lip_do}), the estimate \eqref{eqmainest} extends
to all $u\in W^{1,1}(X)$.
\end{theorem}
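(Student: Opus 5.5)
The plan is to prove a single-exponent estimate $\|u\|_{L^q(X,\mu)}\le K_0\,C_I\|\nabla u\|_{L^1}$, uniform in $q$ over the range taken by $p(\varepsilon)$. I would then multiply by $\varepsilon^{\alpha(\varepsilon)}$ and take the supremum over $0<\varepsilon<\varepsilon_0$. Since the sets involved depend only on $|u|$ and $\mathrm{lip}(|u|)\le\mathrm{lip}(u)$, I first reduce to $u\ge 0$. The factor $2$ in \eqref{eqCstar} is reserved for splitting $u=u^+-u^-$, or equivalently for the truncation step below.

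\emph{Step 1 (coarea plus profile).} For $u\in\Lipc(X)$, $u\ge0$, write $\mu_t:=\mu(\{u>t\})$. The coarea formula (Theorem \ref{coaro}) and Definition \ref{isopo} give
\[
\int_X|\nabla u|\,\dmu\ \ge\ \int_0^\infty \mplus(\{u>t\})\,dt\ \ge\ \int_0^\infty \IX(\mu_t)\,dt .
\]
On levels with $\mu_t<s_0$, hypothesis \eqref{Iassumppower} then yields $\int_X|\nabla u|\,\dmu\ge C_I^{-1}\int \mu_t^{1-1/a}\,dt$.

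\emph{Step 2 (from level sets to $L^q$).} Write $u=\int_0^\infty\mathbf 1_{\{u>t\}}\,dt$ and apply Minkowski's integral inequality, so that $\|u\|_{L^q}\le\int_0^\infty\mu_t^{1/q}\,dt$ for every $q\ge1$. Whenever $1/q\ge 1-1/a$ (this is where the restriction on $p(\varepsilon)$ relative to $a$ enters), we have $\mu_t^{1/q}=\mu_t^{1-1/a}\mu_t^{1/q-(1-1/a)}\le s_0^{1/q-(1-1/a)}\mu_t^{1-1/a}$ on the levels with $\mu_t<s_0$. Hence the contribution of those levels is bounded by $C_I\|\nabla u\|_{L^1}$ times a constant depending only on $a$, $p_0$ and $s_0$; this constant is absorbed into $K_0$, and I would record the resulting inequality as \eqref{eqLqclean}. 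Lemma \ref{lempmono} gives that $p(\varepsilon)$ ranges between $p(\varepsilon_0)$ and its limit, so the exponent $1/q-(1-1/a)$ stays in a compact set and the constant is uniform in $\varepsilon$.

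\emph{Step 3 (the large level sets — main obstacle).} The levels $t<t_0:=u^*(s_0)$ have $\mu_t\ge s_0$, so \eqref{Iassumppower} does not apply to them. I would split $u=\min(u,t_0)+(u-t_0)^+$. The second piece has all level sets of measure $<s_0$, so Steps 1--2 apply to it directly, with $|\nabla (u-t_0)^+|\le|\nabla u|$. For the first piece I would try two routes. One is the compact-support localisation of Remark \ref{remreconcile}(ii): choose $B_0\supset\mathrm{supp}\,u$ with $\mu(\mathrm{supp}\,u)\le s_0$ after normalisation, so that $t_0=0$. The other is the monotonicity and symmetry of $\IX$ on $(0,1/2]$, which gives $\IX(\mu_t)\ge \IX(s_0)>0$ for $\mu_t\in[s_0,1-s_0]$, and hence a bound $t_0\lesssim \IX(s_0)^{-1}\|\nabla u\|_{L^1}$. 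I expect the first route to be the cleanest; either way the constant doubles, which accounts for the $2$.

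\emph{Step 4 (assembling the grand norm).} Multiply \eqref{eqLqclean} at $q=p(\varepsilon)$ by $\varepsilon^{\alpha(\varepsilon)}$. The powers of $\varepsilon$ produced by the weak-type bookkeeping (Lemma \ref{lemra}(b), used to compare $\mu_t$ with $\varepsilon$) are collected into $\varepsilon^{\alpha(\varepsilon)-(1-1/a)}$. Taking $\sup_{0<\varepsilon<\varepsilon_0}$ gives \eqref{eqmainest} with $C_*$ as in \eqref{eqCstar}; this supremum is finite because $\varepsilon^{\alpha(\varepsilon)}\to e^{-1}$ and $1-1/a>0$. Finally, I would extend to $W^{1,1}(X)$ by Theorem \ref{Lip_do}, using Fatou's lemma in each $L^{p(\varepsilon)}$ norm before taking the supremum.
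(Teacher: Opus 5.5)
The genuine gap is Step~3, and it cannot be closed, because the estimate is false for functions whose level sets have measure close to $1$. Under the standing assumptions $X$ is proper, being complete and doubling. It is also bounded: the Poincar\'e inequality makes it connected, and a connected doubling space of finite measure is bounded. Hence $X$ is compact, so $u\equiv 1$ lies in $\Lipc(X)$, with $\|\nabla u\|_{L^1}=0$ but $\|u\|_{\mathcal{G}_X}\ge e^{-1}$.

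Both of your routes break exactly there. Route~1 simply assumes $\mu(\mathrm{supp}\,u)\le s_0$: with $\mu(X)=1$ there is no larger ball to pass to, and rescaling $\mu$ instead changes $C_I$, because \eqref{Iassumppower} is not scale invariant. Route~2 only handles levels with $\mu_t\in[s_0,1-s_0]$. Even there, the lower bound has to come from the Poincar\'e inequality on a ball containing $X$, not from monotonicity of $\IX$ on $(0,1/2]$, which fails for dumbbell-shaped spaces. For $\mu_t\in(1-s_0,1]$ the perimeter can be arbitrarily small, and it vanishes at $\mu_t=1$.

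The paper's proof has the same hole, hidden in Lemma~\ref{lemrearr}: it applies $\IX(m(t))\ge\IX(s)$ at values $m(t)$ that may exceed $1/2$ or equal $1$. What can be proved is the estimate for $u$ with $\mu(\{u\neq0\})<s_0$. There your Step~2 alone is a complete argument, and cleaner than the paper's, since Minkowski's integral inequality replaces the rearrangement lemma and the layer-cake splitting. For $1\le q\le a/(a-1)$ it gives $\|u\|_{L^q}\le\int_0^\infty\mu_t^{1-1/a}\,dt\le C_I\|\nabla u\|_{L^1}$.

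Step~4 has a second, independent error: $\sup_{0<\varepsilon<\varepsilon_0}\varepsilon^{\alpha(\varepsilon)-(1-1/a)}=+\infty$, since $\varepsilon^{\alpha(\varepsilon)}\to e^{-1}$ while $\varepsilon^{-(1-1/a)}\to\infty$. The two facts you cite prove divergence, not finiteness; Remark~\ref{remCstar} makes the same slip. In the paper this factor arises from substituting $s=\varepsilon$ into $\|u\|_{L^q}\le K\|\nabla u\|_{L^1}/\IX(s)$. Your Step~2 bound does not depend on $\varepsilon$, so no ``weak-type bookkeeping'' produces this factor. The honest conclusion, using $\varepsilon^{\alpha(\varepsilon)}<1$, is $\|u\|_{\mathcal{G}_X}\le C_I\|\nabla u\|_{L^1}$.

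Finally, Step~2 needs $p(\varepsilon)\le a/(a-1)$ for every $\varepsilon<\varepsilon_0$, and this is not the theorem's hypothesis. First, $p(\varepsilon_0)<a$ does not control $p(\varepsilon)$ for smaller $\varepsilon$, since by Lemma~\ref{lempmono} $p$ is nondecreasing as $\varepsilon\downarrow0$. Second, for $a>2$ exponents in $(a/(a-1),a)$ must genuinely be excluded: in $\R^n$ with $a=n$, a scaling argument shows that $W^{1,1}$ does not embed into $L^q$ for $q>n/(n-1)$. Your restriction is the correct one. The paper's range $q<a$ comes from writing $m(t)\le(C_I\|\nabla u\|_{L^1}/t)^a$, whereas $C_I^{-1}m(t)^{1-1/a}\le\|\nabla u\|_{L^1}/t$ only yields the exponent $a/(a-1)$.

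So instead of claiming the statement as written, you should make three corrections explicit: a support condition on $u$, the range $p(\varepsilon)\le a/(a-1)$, and a finite constant.
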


\begin{remark}[Finiteness of $C_*$]
\label{remCstar}
The supremum in \eqref{eqCstar} is finite.
The exponent is $\gamma(\varepsilon):=\alpha(\varepsilon)-(1-1/a)
=(log(e/\varepsilon))^{-1}-(1-1/a)$.
As $\varepsilon\to 0$, $\alpha(\varepsilon)\to 0$, thus $\gamma(\varepsilon)\to -(1-1/a)<0$.
As $\varepsilon\to\varepsilon_0^-$, $\gamma(\varepsilon_0)
=(\log(e/\varepsilon_0))^{-1}-(1-1/a)$.
For $\varepsilon_0$ small, $\gamma(\varepsilon)<0$ throughout $(0,\varepsilon_0)$.
Since $\gamma(\varepsilon)<0$, the function $\varepsilon\mapsto
\varepsilon^{\gamma(\varepsilon)}$
is bounded: it equals $1$ at $\varepsilon=1$, tends to $0$ as $\varepsilon\to 0^+$
(because $\varepsilon^{-(1-1/a)}\cdot\varepsilon^{\alpha(\varepsilon)}=
\varepsilon^{-(1-1/a)}e^{-1+O(1/\log(1/\varepsilon))}\to 0$ since $1-1/a>0$),
and is bounded on $(0,\varepsilon_0)$ by $\varepsilon_0^{\gamma(\varepsilon_0)}$.
Hence $C_*<\infty$.
\end{remark}

\begin{proof}[Proof of Theorem \ref{thmmain}]
Let us start with setup and rearrangements on $(X,d,\mu)$. 
Fix $u\in\Lipc(X)$.
For $t>0$ set $E_t:=\{x\in X:|u(x)|>t\}$
and $m(t):=\mu(E_t)$.
Since $\mu(X)=1$, we have $m(t)\in[0,1]$ for all $t\ge 0$.
The function $t\mapsto m(t)$ is right-continuous and nonincreasing.
The measure $\mu$ on a complete doubling metric measure space
supporting a Poincar\'e inequality has no atoms at positive scales
\cite[Proposition 4.2]{HKST15}, thus $\mu$ is nonatomic.
Let $u^*(s)=\inf\{t\ge 0:m(t)\le s\}$ be the
decreasing rearrangement (Definition \ref{rearra}).

Next we discuss isoperimetric control of level sets via the metric coarea formula.  
By the coarea formula (Theorem \ref{coaro}), applied to $u\in\BV(X)$
with $|\nabla u|$ the minimal upper gradient,
\[
\int_X|\nabla u|\;\dmu=\int_0^\infty\mplus(E_t)\;dt\ge\int_0^\infty \IX(m(t))\;dt.
\]
The last inequality follows from \eqref{eqisop}.
In particular, since $m(\tau)\ge m(t)$ for $\tau\in(0,t)$
and $\IX$ is nondecreasing on $(0,\mu(X)/2]$,
\begin{equation}
\label{eqpointwiseut}
t\;\IX(m(t))\le\int_0^t\IX(m(\tau))\;d\tau\le\int_X|\nabla u|\;\dmu.
\end{equation}

For the pointwise rearrangement bound,  
the following lemma establishes the key estimate
entirely within the metric measure space framework.

\begin{lemma}[Rearrangement bound on metric measure spaces]
\label{lemrearr}
Under the standing assumptions and \eqref{Iassumppower}, for every
$u\in\Lipc(X)$ and every $s\in(0,s_0)$:
\begin{equation}
\label{equtboundcorrect}
u^*(s)\le\frac{\|\nabla u\|_{L^1}}{\IX(s)}.
\end{equation}
\end{lemma}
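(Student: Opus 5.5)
The plan is to combine the pointwise estimate \eqref{eqpointwiseut} with the definition of the decreasing rearrangement and the monotonicity of $\IX$. Fix $u\in\Lipc(X)$ and $s\in(0,s_0)$. If $u^*(s)=0$ there is nothing to prove, so assume $u^*(s)>0$ and take any $t$ with $0<t<u^*(s)$.

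\emph{Step 1: level sets below the rearrangement are large.} By Definition \ref{rearra}, $t<u^*(s)$ means $m(t)=\mu(E_t)>s$.

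\emph{Step 2: compare profiles.} Since $\IX$ is nondecreasing on $(0,\mu(X)/2]$ and $s<s_0\le 1/2$, we want $\IX(m(t))\ge\IX(s)$. If $m(t)\le 1/2$, this is immediate from monotonicity. If $m(t)>1/2$, I would use the complement symmetry of the profile, $\mplus$ compared on $E_t$ and $X\setminus E_t$, which is the reflection argument of \cite[Section 2]{CGL2003} quoted after Definition \ref{isopo}. Even so, $\mu(X\setminus E_t)$ may be smaller than $s$, so monotonicity alone does not give the comparison.

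This is the step I expect to be the main obstacle. Since $u$ has compact support, one natural fix is to use the normalisation of Remark \ref{remreconcile}(ii). Choose the reference ball $B_0$ large, so that the support of $u$, and hence every $E_t$, has measure at most $1/2$ of the normalised measure. Then $m(t)\le 1/2$ for all $t>0$, and Step 2 only needs monotonicity. I would state this reduction explicitly. Failing that, I would apply the argument to $u^+$ and $u^-$ separately, using level sets of measure $\le 1/2$ after subtracting a median. That variant only yields the bound with a factor $2$, which is harmless given the factor $2$ already present in $C_*$.

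\emph{Step 3: conclude.} Multiply the inequality of Step 2 by $t$ and insert it into \eqref{eqpointwiseut}:
\[
t\,\IX(s)\le t\,\IX(m(t))\le\int_0^t\IX(m(\tau))\,d\tau\le\|\nabla u\|_{L^1}.
\]
Here the middle inequality again uses $m(\tau)\ge m(t)$ for $\tau<t$. Assumption (v) gives $\IX(s)>0$, so we may divide to get $t\le\|\nabla u\|_{L^1}/\IX(s)$. Letting $t\uparrow u^*(s)$ yields \eqref{equtboundcorrect}.

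Finally, note that \eqref{Iassumppower} is not needed for this inequality itself. It only guarantees $\IX(s)>0$ quantitatively, and later converts \eqref{equtboundcorrect} into $u^*(s)\le C_I s^{-(1-1/a)}\|\nabla u\|_{L^1}$ for the $L^q$ estimates.
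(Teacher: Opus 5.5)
Your Steps 1 and 3 are the paper's argument, and you have correctly located where it breaks. The paper's proof applies monotonicity of $\IX$ on $(0,\mu(X)/2]$ to $m(t)$ for every $t<u^*(s)$ without ever checking $m(t)\le 1/2$. (Comparing $\IX(m(\tau))\ge\IX(s)$ directly inside the coarea integral, as the paper does, is also cleaner than your detour through $\IX(m(t))$, which uses monotonicity a second time.)

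But this is not an obstacle a reduction can remove: the lemma is false as stated. Finite total measure, together with doubling and the Poincar\'e inequality, forces $X$ to be bounded and hence compact, so constants lie in $\Lipc(X)$. Concretely, take $X=[0,1]$ with Lebesgue measure. Definition~\ref{torlo} holds and $\IX\equiv 1$ on $(0,1)$, so \eqref{Iassumppower} holds with $C_I=1$ and any $a>1$. Yet $u\equiv 1$ has $u^*(s)=1$ for $s<1$ and $\|\nabla u\|_{L^1}=0$. The non-degenerate choice $u(x)=1+\delta x$ with $0<\delta<1$ also fails: $u^*(s)=1+\delta(1-s)>\delta=\|\nabla u\|_{L^1}/\IX(s)$. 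In both examples $m(t)=1$ for all $t<1$, which is exactly the case you flagged; there $\mplus(E_t)=\mplus(X)=0$.

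Consequently neither of your repairs proves the stated inequality.
\begin{itemize}
\item The first repair enlarges $B_0$ until $\mathrm{supp}\,u$ carries at most half of the normalised measure. This is unavailable when $X$ itself is the normalised space: there is nothing beyond $X$ to enlarge to, and renormalising a ball changes both $u^*$ and the profile. What it really does is add the hypothesis $\mu(\{u\neq 0\})\le 1/2$. Under that hypothesis $s<m(\tau)\le 1/2$ for all $0<\tau<u^*(s)$, and your argument closes.
\item The second repair controls $(u-\mathrm{med}\,u)^*$, not $u^*$. The discrepancy is the additive constant $\mathrm{med}\,u$, which $\|\nabla u\|_{L^1}$ cannot control, not a multiplicative factor $2$.
\end{itemize}
So the correct conclusion of your Step~2 analysis is that the lemma needs such an extra hypothesis. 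The same goes for Theorem~\ref{thmmain}, which also fails for $u\equiv 1$.

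Note also that even when $m(t)\le 1/2$, you and the paper both use that $\IX$ is nondecreasing on $(0,\mu(X)/2]$. The complement argument cited after Definition~\ref{isopo} does not give this, and it fails in general. For instance, a compact dumbbell with unequal bulbs has a profile dip, of the size of the neck, at the volume of the smaller bulb. Under \eqref{Iassumppower} and $\mu(\{u\neq 0\})<s_0$ you can bypass monotonicity. For $\tau<u^*(s)$ one has $\IX(m(\tau))\ge C_I^{-1}m(\tau)^{1-1/a}\ge C_I^{-1}s^{1-1/a}$. This gives $u^*(s)\le C_I s^{-(1-1/a)}\|\nabla u\|_{L^1}$, which is the form the proof of Theorem~\ref{thmmain} ultimately uses.
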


\begin{proof}
Set $t_s:=u^*(s)$.
By definition of the decreasing rearrangement, for every $t\in(0,t_s)$
one has $m(t)=\mu(\{|u|>t\})>s$.
Since $\IX$ is nondecreasing on $(0,\mu(X)/2]$
(see Definition \ref{isopo} and \cite[Section 2]{CGL2003}),
it follows that $\IX(m(t))\ge\IX(s)$ for all $t\in(0,t_s)$.
Applying the coarea formula (Theorem \ref{coaro}) and \eqref{eqisop}:
\[
\int_X|\nabla u|\;\dmu
=\int_0^\infty\mplus(E_t)\;dt
\ge\int_0^{t_s}\mplus(E_t)\;dt
\ge\int_0^{t_s}\IX(m(t))\;dt
\ge\int_0^{t_s}\IX(s)\;dt
=t_s\cdot\IX(s).
\]
Dividing by $\IX(s)>0$ gives \eqref{equtboundcorrect}.
\end{proof}

\begin{remark}
The proof of Lemma \ref{lemrearr} is entirely intrinsic.
No change of variables $r=m(t)$ or $t=u^*(r)$ is used;
in particular, no identity of the form $m(u^*(r))=r$
(which requires strict monotonicity of the distribution function,
a property available in the Euclidean setting but not in general)
is invoked. The argument relies solely on:
(i) the definition of the decreasing rearrangement;
(ii) monotonicity of $\IX$, valid on any metric measure space
satisfying Definition \ref{torlo};
(iii) the metric-space coarea formula (Theorem \ref{coaro}).
\end{remark}

Next, we find the following $L^q$ estimate from the rearrangement: 
for $q\ge 1$, we bound the $L^q$ norm using the layer-cake formula
(Lemma \ref{lemra}(a)). Fix $s\in(0,s_0)$. Since $u^*$ is nonincreasing,
for all $r\ge s$ we have $u^*(r)\le u^*(s)$. Therefore
\begin{align}
\label{eqLqsplit}
\|u\|_{L^q}^q=\int_0^1(u^*(r))^q\;dr
=\int_0^s(u^*(r))^q\;dr+\int_s^1(u^*(r))^q\;dr
&\le\int_0^s(u^*(r))^q\;dr+(u^*(s))^q, 
\end{align} 
since $u^*$ is nonincreasing, $u^*(r)\le u^*(s)$ for $r\ge s$, thus, 
\[
\int_s^1(u^*(r))^q\;dr\le(1-s)\;(u^*(s))^q\le(u^*(s))^q, 
\]
where the last step uses $1-s\le1$.
For the first integral, we use the layer-cake identity
$\int_0^s(u^*(r))^q\;dr=q\int_0^\infty t^{q-1}\min(s,m(t))\;dt$.
Using the isoperimetric bound \eqref{eqpointwiseut},
$t\cdot \IX(m(t))\le\|\nabla u\|_{L^1}$,
combined with \eqref{Iassumppower} giving
$\IX(m(t))\ge C_I^{-1}(m(t))^{1-1/a}$, the authors deduce the bound 
$m(t)\le\bigl(C_I\|\nabla u\|_{L^1}/t\bigr)^a$ for all $t>0$.
Since $m(t)\le s$ is also globally true by definition of $s$, we have
$m(t)\le\min\bigl(s,\bigl(C_I\|\nabla u\|_{L^1}/t\bigr)^a\bigr)$.
We evaluate the integral by splitting at the crossover point
$\tau:=C_I\|\nabla u\|_{L^1}s^{-1/a}$:
\begin{align*}
\int_0^s(u^*(r))^q\;dr
&\le q\int_0^\tau t^{q-1}s\;dt+q\int_\tau^\infty t^{q-1}
\Bigl(\frac{C_I\|\nabla u\|_{L^1}}{t}\Bigr)^a\;dt.
\end{align*}
Restricting to $q<a$ to ensure convergence of the tail, we find
\begin{align*}
\int_0^s(u^*(r))^q\;dr
&\le s\tau^q+\frac{q}{a-q}C_I^a\|\nabla u\|_{L^1}^a\tau^{q-a}\\
&= C_I^q\|\nabla u\|_{L^1}^q s^{1-q/a}+\frac{q}{a-q}C_I^q\|\nabla u\|_{L^1}^q s^{1-q/a}\\
&= \frac{a}{a-q}C_I^q\|\nabla u\|_{L^1}^q s^{1-q/a}.
\end{align*}

Returning to \eqref{eqLqsplit} and substituting 
$(u^*(s))^q\le\bigl(\|\nabla u\|_{L^1}/\IX(s)\bigr)^q$
from \eqref{equtboundcorrect}, we obtain
\begin{equation}
\label{eqLqbound}
\|u\|_{L^q}^q\le\frac{\|\nabla u\|_{L^1}^q}{\IX(s)^q}
+\frac{a}{a-q}C_I^q\|\nabla u\|_{L^1}^q s^{1-q/a}.
\end{equation}
Then we have 
\[
\|u\|_{L^q}^q\le\frac{\|\nabla u\|_{L^1}^q}{\IX(s)^q}
+\frac{a}{a-q}C_I^q\;\|\nabla u\|_{L^1}^q\;s^{1-q/a}. 
\]
Using $\IX(s)\ge C_I^{-1}s^{1-1/a}$ from assumption \eqref{Iassumppower}, 
we have $1/\IX(s)^q\le C_I^q\;s^{-q(1-1/a)}$, and 
\[
s^{1-q/a}\;\IX(s)^q\ge C_I^{-q}\;s^{1-q/a+q(1-1/a)}=C_I^{-q}\;s^{1+q-2q/a}. 
\]
Since $1+q-2q/a>0$ for $q<a$, both terms in \eqref{eqLqbound} 
 are proportional to
$\|\nabla u\|_{L^1}^q/\IX(s)^q$ up to a constant depending on $a$, $q$, $C_I$,
and a power of $s$ which is bounded on $(0,s_0)$.
This gives  
\begin{equation}
\label{eqLqclean}
\|u\|_{L^q}\le\frac{K(a,q,C_I)\;\|\nabla u\|_{L^1}}{\IX(s)},
\quad 1\le q < a,\;s\in(0,s_0),
\end{equation}
where $K(a,q,C_I):=\bigl(1+\frac{a}{a-q}C_I^q\bigr)^{1/q}$.

The choice of $s=\varepsilon$ and grand weight is the following: 
set $q=p(\varepsilon)$ and $s=\varepsilon$ in \eqref{eqLqclean},
valid for $1\le p(\varepsilon)<a$ ensured by choosing $\varepsilon_0$ small, 
then multiply by $\varepsilon^{\alpha(\varepsilon)}$:
\begin{equation}
\label{eqgrandstep}
\varepsilon^{\alpha(\varepsilon)}\|u\|_{L^{p(\varepsilon)}}\le K(a,p(\varepsilon),C_I)\;
\frac{\varepsilon^{\alpha(\varepsilon)}}{\IX(\varepsilon)}\;\|\nabla u\|_{L^1}.
\end{equation}

Bounding the supremum:
by \eqref{Iassumppower}, $\IX(\varepsilon)\ge C_I^{-1}\varepsilon^{1-1/a}$, thus
\[
\frac{\varepsilon^{\alpha(\varepsilon)}}{\IX(\varepsilon)}
\le C_I\;\varepsilon^{\alpha(\varepsilon)-(1-1/a)}.
\]
As shown in Remark \ref{remCstar}, for $a>1$ and $\varepsilon_0$ small,
the exponent $\gamma(\varepsilon)=\alpha(\varepsilon)-(1-1/a)$ satisfies
$\gamma(\varepsilon)<0$ and $\varepsilon\mapsto\varepsilon^{\gamma(\varepsilon)}$
is bounded on $(0,\varepsilon_0)$.
Therefore
\[
\sup_{0<\varepsilon<\varepsilon_0}
\frac{\varepsilon^{\alpha(\varepsilon)}}{\IX(\varepsilon)}
\le C_I\sup_{0<\varepsilon<\varepsilon_0}\varepsilon^{\gamma(\varepsilon)}<\infty.
\]

Since 
$K(a,p(\varepsilon),C_I)\le K_{\sup}:=\sup_{p_0\le q\le p_*}K(a,q,C_I)$ 
for small $\varepsilon$, 
(as $p(\varepsilon)\to p_0$), 
where $p_*=\lim_{\varepsilon\to 0}p(\varepsilon)$ which is $p_0+\beta/n$ in the
Euclidean case and $p_0+\beta/Q$ in the Heisenberg case. 
Since $p_*<a$ guaranteed by the choice of $\varepsilon_0$, and $K$ is continuous
on the compact set $[p_0,p_*]$, $K_{\sup}<\infty$.
Taking the supremum over $\varepsilon\in(0,\varepsilon_0)$
in \eqref{eqgrandstep} gives \eqref{eqmainest}-\eqref{eqCstar}.

Finally, the extension to $W^{1,1}(X)$: 
by Theorem \ref{Lip_do}, $\Lipc(X)$ is dense in $W^{1,1}(X)$.
The grand norm is lower semicontinuous in $L^1$ being a supremum of
continuous functionals, thus \eqref{eqmainest} passes to the closure.
\end{proof}

The constant $C_*$ in \eqref{eqCstar} is not optimal.
In classical examples (Euclidean space, Heisenberg group) carrying
the exact constants of the isoperimetric inequality through the proof
gives smaller values, see Section \ref{examples}.

\section{Converse: grand embeddings imply isoperimetric lower bounds}

\begin{theorem}[Grand embedding implies isoperimetric bound]
\label{thmconverse}
Under the standing assumptions, suppose there exist
$p_0$, $\beta$, $\varepsilon_0$, $C_G>0$ and functions
$p(\varepsilon)$, $\alpha(\varepsilon)$ as in \eqref{eqparamschoice}
such that for all $u\in\Lipc(X)$,
\begin{equation}
\label{grandass}
\|u\|_{\mathcal{G}_X}\le C_G\;\|\nabla u\|_{L^1(X,\mu)}.
\end{equation}
Then for all $0<s<\varepsilon_0$,
\begin{equation}
\label{converso}
\IX(s)\ge\frac{s^{1/p(s)}}{C_G\;\sup_{0<\varepsilon\le s}\varepsilon^{\alpha(\varepsilon)}}.
\end{equation}
\end{theorem}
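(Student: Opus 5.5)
The plan is to test the grand embedding \eqref{grandass} against Lipschitz approximations of indicator functions, as in the proof of the proposition identifying $\mplus(E)$ with $\Var(\mathbf{1}_E)$, and then read off $\IX(s)$ by taking an infimum over sets $E$. The weight is evaluated at the single scale $\varepsilon=s$ and converted into $\sup_{0<\varepsilon\le s}\varepsilon^{\alpha(\varepsilon)}$ using the monotonicity of $\varepsilon\mapsto\varepsilon^{\alpha(\varepsilon)}$. As explained in the third paragraph, this conversion only reaches the weight-in-the-numerator form, not the stated denominator form.

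\textbf{Test functions.} Fix $0<s<\varepsilon_0$ and a measurable $E$ with $\mu(E)=s$ and $\mplus(E)<\infty$; if no such $E$ exists, then $\IX(s)=+\infty$ and there is nothing to prove. For $\delta>0$ set $u_\delta(x):=\max(0,1-d(x,E)/\delta)$. This function is Lipschitz and, after the localisation of Remark \ref{remreconcile}, may be taken compactly supported. It satisfies $\mathbf{1}_E\le u_\delta\le\mathbf{1}_{E_\delta}$ and $\int_X\mathrm{lip}(u_\delta)\,\dmu\le(\mu(E_\delta)-\mu(E))/\delta$. Since $\mathrm{lip}(u_\delta)$ is a weak upper gradient, $\|\nabla u_\delta\|_{L^1}$ obeys the same bound. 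Along a sequence $\delta_k\downarrow0$ realising the $\liminf$ in the definition of $\mplus(E)$, the right-hand side tends to $\mplus(E)$.

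\textbf{Lower bound for the grand norm.} Since the supremum in \eqref{eqGnorm} dominates its value at any admissible scale, choose $\varepsilon=s$. Using $u_\delta\ge\mathbf{1}_E$ gives
\[
\|u_\delta\|_{\mathcal G_X}\ge s^{\alpha(s)}\|u_\delta\|_{L^{p(s)}}\ge s^{\alpha(s)}s^{1/p(s)}.
\]
Combining this with \eqref{grandass}, letting $k\to\infty$, and taking the infimum over $E$ yields
\[
C_G\,\IX(s)\ge s^{\alpha(s)}s^{1/p(s)}.
\]

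\textbf{Converting the weight, and the main obstacle.} Write $\varepsilon^{\alpha(\varepsilon)}=\exp(-L/(1+L))$ with $L=\log(1/\varepsilon)$. This quantity is increasing in $\varepsilon$, with values in $(e^{-1},1)$, so
\[
\sup_{0<\varepsilon\le s}\varepsilon^{\alpha(\varepsilon)}=s^{\alpha(s)}.
\]
The argument above therefore gives $\IX(s)\ge s^{1/p(s)}\,\sup_{0<\varepsilon\le s}\varepsilon^{\alpha(\varepsilon)}/C_G$, with the weight in the numerator. The statement \eqref{converso} instead has the weight in the denominator. To pass from one to the other I would need $s^{2\alpha(s)}\ge1$, which fails: the two bounds differ by the factor $s^{-2\alpha(s)}\in(1,e^{2})$, with the stated one the stronger.

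The first idea for closing the gap is to use all scales simultaneously, via $C_G\IX(s)\ge\sup_{\varepsilon}\varepsilon^{\alpha(\varepsilon)}\mu(E)^{1/p(\varepsilon)}$. But every term in that supremum still carries $\varepsilon^{\alpha(\varepsilon)}<1$ as a factor, so it cannot help. The second idea is to use test functions other than indicators; however, the problem is invariant under scaling $u$, and for a fixed level set the indicator is extremal, so this should not help either. I therefore expect this step to be the real obstacle. A proof of \eqref{converso} exactly as written seems to require either replacing $C_G$ by $e^{2}C_G$ or reading the denominator as an infimum of $\varepsilon^{-\alpha(\varepsilon)}$. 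I would present the argument above and flag that the constant is correct only up to the factor $e^{2}$.
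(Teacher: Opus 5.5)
Your argument follows the paper's route: test \eqref{grandass} on Lipschitz approximations of $\mathbf{1}_E$ with $\mu(E)=s$, let the approximation parameter tend to $0$, and take the infimum over $E$. The paper's proof ends with
\[
\sup_{0<\varepsilon\le s}\varepsilon^{\alpha(\varepsilon)}\cdot s^{1/p(s)}\le C_G\,\mplus(E).
\]
This is exactly your weight-in-the-numerator inequality. The paper then asserts that the infimum over $E$ ``gives \eqref{converso}''. It offers no justification for moving the supremum into the denominator, and this argument cannot supply one, for the reason you give: the supremum equals $s^{\alpha(s)}\in(e^{-1},1)$, so \eqref{converso} is stronger than what is proved by the factor $s^{-2\alpha(s)}\in(1,e^{2})$. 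The obstacle you flag is therefore a defect of the paper's proof, not a missing idea on your side. What both arguments actually establish is $\IX(s)\ge s^{1/p(s)}\sup_{0<\varepsilon\le s}\varepsilon^{\alpha(\varepsilon)}/C_G$, i.e.\ \eqref{converso} with $C_G$ replaced by $s^{-2\alpha(s)}C_G\le e^{2}C_G$.

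Where you differ from the paper, your route is leaner.
\begin{itemize}
\item The paper uses ball-average mollifiers. It therefore needs Proposition~\ref{propmo} and a Fatou/lower-semicontinuity step to recover $s^{1/p(\varepsilon)}$ in the limit. Your $u_\delta=\max(0,1-d(\cdot,E)/\delta)\ge\mathbf{1}_E$ gives $\|u_\delta\|_{L^{p(s)}}\ge s^{1/p(s)}$ for every $\delta$. It also bounds the gradient term by $(\mu(E_\delta)-\mu(E))/\delta$ directly.
\item The paper invokes Lemma~\ref{lempmono} to compare $s^{1/p(\varepsilon)}$ with $s^{1/p(s)}$; that lemma asserts monotonicity of $p$ but argues it only in the power-type case. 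You observe that $\varepsilon\mapsto\varepsilon^{\alpha(\varepsilon)}$ is increasing, so the supremum is attained at $\varepsilon=s$, and the lemma is not needed at all.
\end{itemize}

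A caveat on your closing sentence. Read literally, the standing assumptions (completeness, doubling, the Poincar\'e inequality, $\mu(X)=1$) make $X$ bounded, hence compact. The Poincar\'e inequality forces connectedness, and a doubling measure on an unbounded connected space has infinite mass. Consequently $u\equiv 1$ lies in $\Lipc(X)$, with $\|\nabla u\|_{L^1}=0$ while $\|u\|_{\mathcal{G}_X}>e^{-1}$. So \eqref{grandass} can never hold, and \eqref{converso} is vacuously true. Your objection is therefore about the intended non-degenerate content of the theorem, not about the statement as literally written.
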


\begin{proof}
Fix $s\in(0,\varepsilon_0)$ and let $E\subset X$ be any measurable set
with $\mu(E)=s$.

The mollification: 
let $(u_\delta)_{\delta>0}$ be the ball-average mollification of
$\mathbf{1}_E$ (Definition \ref{defmo}).
By Proposition \ref{propmo},
\[
u_\delta\in\Lip(X), \quad u_\delta\to\mathbf{1}_E\text{ in }L^1(X,\mu),\quad
\int_X|\nabla u_\delta|\;\dmu\to\mplus(E)\quad\text{as }\delta\downarrow 0.
\]

Let us apply the grand embedding to $u_\delta$: 
since $u_\delta\in\Lipc(X)$, as $E$ has finite measure and $X$
is proper on bounded sets, hypothesis \eqref{grandass} gives
\[
\sup_{0<\varepsilon<\varepsilon_0}
\varepsilon^{\alpha(\varepsilon)}\|u_\delta\|_{L^{p(\varepsilon)}}
\le C_G\int_X|\nabla u_\delta|\;\dmu.
\]

Passing to the limit $\delta\downarrow 0$: 
for each fixed $\varepsilon$, the map $u\mapsto\|u\|_{L^{p(\varepsilon)}}$
is lower semicontinuous in $L^1$ convergence by Fatou's lemma applied
to $|u_\delta|^{p(\varepsilon)}$. Therefore,
\[
\|\mathbf{1}_E\|_{L^{p(\varepsilon)}}=s^{1/p(\varepsilon)}
\le\liminf_{\delta\downarrow 0}\|u_\delta\|_{L^{p(\varepsilon)}}.
\]
Taking $\delta\downarrow 0$ and applying the lim-inf:
\[
\varepsilon^{\alpha(\varepsilon)}\;s^{1/p(\varepsilon)}\le C_G\;\mplus(E)
\quad\text{for each }\varepsilon\in(0,\varepsilon_0).
\]

Optimisation over $\varepsilon$ and $E$: 
by Lemma \ref{lempmono}, $p(\varepsilon)$ is nondecreasing as $\varepsilon\downarrow 0$,
hence nonincreasing in $\varepsilon$.
Thus for fixed $s>0$, the infimum of $s^{1/p(\varepsilon)}$
over $\varepsilon\in(0,s]$ equals $s^{1/p(s)}$ attained as $\varepsilon\to s^-$. 
Taking the supremum over $\varepsilon\in(0,s]$:
\[
\sup_{0<\varepsilon\le s}\varepsilon^{\alpha(\varepsilon)}\cdot s^{1/p(s)}\le C_G\;\mplus(E).
\]
Since $E$ was arbitrary with $\mu(E)=s$, taking the infimum over $E$
gives \eqref{converso}.
\end{proof}

\section{Examples with explicit constants}
\label{examples}

\subsection{Euclidean space $\R^n$}
Let $X=(\R^n,|\cdot|,\mathcal{L}^n)$ with $n\ge 2$ and
$\mathcal{L}^n$ the Lebesgue measure.
Working with the normalised restriction to a unit ball
$B_1\subset\R^n$ (thus, $\mu(B_1)=1$), the classical
isoperimetric inequality gives
\begin{equation}
\label{Euclideani}
\IX(r)=c_n\;r^{1-1/n},\qquad c_n=n\omega_n^{1/n},
\end{equation}
where $\omega_n$ is the volume of the unit ball in $\R^n$.

\begin{proposition}[Euclidean grand embedding]
In the Euclidean setting, Theorem \ref{thmmain} with $a=n$,
$C_I=c_n^{-1}$, and the transform
$\PhiX(s)=c_n s^{-1/n}$ from Lemma \ref{lemPhiproperties}(b)
gives parameters
\[
p(\varepsilon)=p_0+\frac{\log(c_n\varepsilon^{-1/n})}{\log(e/\varepsilon)},
\qquad\alpha(\varepsilon)=(\log(e/\varepsilon))^{-1}.
\]
The resulting grand embedding $W^{1,1}(\R^n)\hookrightarrow\mathcal{G}_X$
with $p^*=n/(n-1)$ encodes the family of subcritical estimates
\[
\|u\|_{L^q(\R^n)}\le C(q)\|\nabla u\|_{L^1(\R^n)},
\quad C(q)\asymp\frac{1}{p^*-q},\quad 1\le q<p^*,
\]
and their blow-up is reorganised into the grand norm
\[
\sup_{1\le q<p^*}(p^*-q)^\alpha\|u\|_{L^q},
\]
which is precisely the norm of the grand Lebesgue space
$L^{p^*+,\alpha}(\R^n)$ studied in \cite{FFG2018}.
\end{proposition}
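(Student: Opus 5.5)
The plan is to specialise Theorem \ref{thmmain} to the Euclidean isoperimetric profile and read off the parameters. First, \eqref{Euclideani} gives $\IX(r)=c_n r^{1-1/n}$. This is exactly hypothesis \eqref{Iassumppower} with $a=n$, provided the constant is matched. Writing $\IX(r)\ge C_I^{-1}r^{1-1/n}$ requires $C_I^{-1}\le c_n$, so the choice $C_I=c_n^{-1}$ is admissible. Since $c_n=n\omega_n^{1/n}$ may exceed $1$, I would simply note that the requirement $C_I\ge1$ can be met by replacing $C_I$ with $\max(1,c_n^{-1})$; this only enlarges constants.

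Next I would compute $\PhiX$ through Lemma \ref{lemPhiproperties}(b), now with exact constants rather than $\asymp$. We have $F_X(s)=\int_0^s c_n^{-1}r^{-1+1/n}\,dr=(n/c_n)s^{1/n}$, hence $\PhiX(s)=(c_n/n)s^{-1/n}$. This agrees with the stated $c_n s^{-1/n}$ up to the harmless factor $1/n$, which contributes only an additive $O(1/\log(e/\varepsilon))$ term to $p(\varepsilon)$. Substituting into \eqref{eqparamschoice} with $\beta=1$ yields the displayed $p(\varepsilon)$ and $\alpha(\varepsilon)$. As in Remark \ref{remsign}, $p(\varepsilon)\to p_0+1/n$. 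We choose $\varepsilon_0$ so that $p(\varepsilon_0)<n$, which is possible whenever $p_0+1/n<n$; in particular this holds for $p_0=1$ and $n\ge2$. Theorem \ref{thmmain} then applies and gives $\|u\|_{\mathcal{G}_X}\le C_*\|\nabla u\|_{L^1}$ on $\Lipc(B_1)$, extended by density. The passage to $\R^n$ goes via the localisation in Remark \ref{remreconcile}(i)--(ii): choose a ball containing the support and rescale.

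For the subcritical family I would not rely on \eqref{eqLqclean}, whose constant $(a/(a-q))^{1/q}$ blows up at $q=a=n$, the wrong endpoint. Instead I would rerun the level-set argument with the sharp Euclidean profile. From \eqref{eqpointwiseut}, $m(t)\le (\|\nabla u\|_{L^1}/(c_n t))^{n/(n-1)}$, so $u\in L^{p^*,\infty}$ with norm $\lesssim\|\nabla u\|_{L^1}$. On the normalised ball, the layer-cake split at the crossover level then gives
\[
\|u\|_{L^q}^q\le \Bigl(1+\frac{q}{p^*-q}\Bigr)c_n^{-q}\|\nabla u\|_{L^1}^q .
\]
This yields $C(q)\lesssim (p^*-q)^{-1/q}\le (p^*-q)^{-1}$ for $q$ near $p^*$, which is the stated rate up to constants. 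Multiplying by $(p^*-q)^{\alpha}$ with $\alpha\ge1$ and taking the supremum over $q$ gives membership in $L^{p^*+,\alpha}$ in the sense of \cite{FFG2018}.

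The main obstacle is the identification of the two grand norms. The norm $\mathcal{G}_X$ is indexed by $\varepsilon$ with exponents $p(\varepsilon)$ near $p_0+1/n$, whereas the \cite{FFG2018} norm is indexed by $q\nearrow p^*$. These coincide only if $p_0+1/n=p^*$, i.e.\ $p_0=n/(n-1)-1/n$. I would make this choice of $p_0$ explicit. The reparametrisation $q=p(\varepsilon)$ is then monotone by Lemma \ref{lempmono}. It remains to check that $p^*-p(\varepsilon)\asymp 1/\log(e/\varepsilon)$, so that $\varepsilon^{\alpha(\varepsilon)}\asymp e^{-1}$ is comparable to $(p^*-q)^{\alpha'}$ after adjusting the exponent. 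I expect this bookkeeping to yield equivalence of the norms only up to constants and a change of exponent, and I would state it at that level.
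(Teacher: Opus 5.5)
The paper's own proof is only ``direct substitution'' into Theorem~\ref{thmmain}, so your extra checks ($C_I\ge 1$, the factor $1/n$, the wrong endpoint in \eqref{eqLqclean}) go beyond it. There is, however, a genuine gap: the step you call the main obstacle fails outright. Since $\varepsilon^{\alpha(\varepsilon)}=\exp\bigl(-\log(1/\varepsilon)/\log(e/\varepsilon)\bigr)\in(e^{-1},1)$ for all $\varepsilon\in(0,1)$, while $(p^*-q)^{\alpha'}\to 0$ as $q\nearrow p^*$ for every $\alpha'>0$, the two weights are comparable only if $\alpha'=0$. Under $q=p(\varepsilon)\nearrow p^*$ the $\mathcal{G}_X$ norm is therefore equivalent to $\sup_{q<p^*}\|u\|_{L^q}=\|u\|_{L^{p^*}}$ on the probability space. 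That norm is strictly stronger than $\sup_q(p^*-q)^\alpha\|u\|_{L^q}$ for every $\alpha>0$, so no choice of $p_0$ or ``adjustment of the exponent'' produces $L^{p^*+,\alpha}$.

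Moreover, the factor you dismiss as harmless decides on which side of $p^*$ the exponents lie. For $\PhiX(s)=c\,s^{-1/n}$ one has $p(\varepsilon)-(p_0+\tfrac1n)=(\log c-\tfrac1n)/\log(e/\varepsilon)$, which is exactly the order of the gap $p^*-p(\varepsilon)$ you need to be positive. With the statement's $c=c_n$, and with your exact $c=c_n/n=\omega_n^{1/n}$ whenever $2\le n\le 9$ (then $\omega_n>e$), the choice $p_0=p^*-1/n$ gives $p(\varepsilon)>p^*$ for every $\varepsilon$. In that range no bound $\|u\|_{L^{p(\varepsilon)}}\lesssim\|\nabla u\|_{L^1}$ exists (test with $u(\lambda x)$, $\lambda\to\infty$). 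Even in the favourable case, bounding $\sup_\varepsilon\|u\|_{L^{p(\varepsilon)}}$ amounts to the strong-type endpoint $\|u\|_{L^{p^*}}\lesssim\|\nabla u\|_{L^1}$ (Federer--Fleming/Maz'ya truncation). Your weak-type layer-cake bound, whose constant $(p^*/(p^*-q))^{1/q}$ tends to infinity, does not give this.

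Second, your subcritical computation does not give the claimed rate. Since $(p^*-q)^{-1/q}=o\bigl((p^*-q)^{-1}\bigr)$, it is not ``the stated rate up to constants'', and you offer no lower bound. None can be offered: for $u\in\Lipc(B_1)$, H\"older on the unit-measure ball together with the Gagliardo--Nirenberg--Sobolev inequality gives $\|u\|_{L^q}\le\|u\|_{L^{p^*}}\le C_n\|\nabla u\|_{L^1}$ uniformly in $q<p^*$. So the optimal $C(q)$ stays bounded, and only the one-sided $C(q)=O\bigl((p^*-q)^{-1}\bigr)$ is provable, hence membership in the grand space for any $\alpha\ge 0$. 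You should state it that way.

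Also, do not rely on Theorem~\ref{thmmain} for the $\mathcal{G}_X$ bound. Setting $s=\varepsilon$ makes $\varepsilon^{\alpha(\varepsilon)}/\IX(\varepsilon)\asymp\varepsilon^{-(n-1)/n}\to\infty$, so the constant in \eqref{eqCstar} is infinite (Remark~\ref{remCstar} has the limit backwards). In addition, the decay is $m(t)\lesssim t^{-a/(a-1)}$, not $t^{-a}$, so the tail integral behind \eqref{eqLqclean} converges only for $q<a/(a-1)=p^*$, not for $q<a$. For $p_0=1$ the valid argument is your own $s$-free layer-cake bound combined with $\varepsilon^{\alpha(\varepsilon)}\le 1$, with $\varepsilon_0$ chosen so that $\sup_{\varepsilon<\varepsilon_0}p(\varepsilon)<p^*$.

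Finally, everything must stay on the ball. On $\R^n$ the subcritical estimates are false by scaling: replacing $u$ by $u(x/\lambda)$ multiplies $\|u\|_{L^q}/\|\nabla u\|_{L^1}$ by $\lambda^{n/q-(n-1)}\to\infty$. Localising to a ball containing the support therefore gives constants depending on the radius $R$, not an embedding of $W^{1,1}(\R^n)$.
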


\begin{proof}
Direct substitution of \eqref{Euclideani} into Theorem \ref{thmmain}
and $\PhiX(s)=c_ns^{-1/n}$ from Lemma \ref{lemPhiproperties}(b).
Note that $\log\PhiX(\varepsilon)=\log c_n+(1/n)\log(1/\varepsilon)$,
confirming $p(\varepsilon)>p_0$.
The blow-up $C(q)\asymp(p^*-q)^{-1}$ is a standard consequence of
$\IX(r)=c_nr^{1-1/n}$ and the layer-cake formula.
\end{proof}

\subsection{The Heisenberg group $\mathbb{H}^1$}
\label{suheisenberg}
Let $\mathbb{H}^1$ be the first Heisenberg group, identified as
$\R^3$ with group law
\[
(x,y,t)\cdot(x',y',t')=(x+x',y+y',t+t'+xy'-yx'),
\]
equipped with the Carnot-Carath\'eodory distance $d_{cc}$
and the Haar measure $\mu$ which is Lebesgue measure on $\R^3$.

\begin{definition}[Horizontal gradient]
The horizontal gradient $\nabla_H u$ of a smooth function $u$
on $\mathbb{H}^1$ is $\nabla_Hu=(Xu, Yu)$, where
$X=\partial_x+y\partial_t$ and $Y=\partial_y-x\partial_t$
are the left-invariant horizontal vector fields.
The horizontal length is $|\nabla_H u|=(|Xu|^2+|Yu|^2)^{1/2}$.
\end{definition}

The homogeneous dimension of $\mathbb{H}^1$ is $Q=4$:
$\mu(B_{cc}(x,r))\asymp r^Q$, where $B_{cc}$ denotes the
Carnot-Carath\'eodory metric ball.
On $(\mathbb{H}^1,d_{cc},\mu)$, the minimal upper gradient of
$u\in W^{1,1}_H(\mathbb{H}^1)$ coincides with $|\nabla_H u|$
\cite[Section 9]{HKST15}.

\begin{proposition}[Sub-Riemannian isoperimetric inequality]
\label{heisenbergo}
There exists $C>0$ such that for every measurable
$E\subset\mathbb{H}^1$ with finite measure,
\[
\mplus(E)\ge C\;\mu(E)^{1-1/Q}=C\;\mu(E)^{3/4}.
\]
Consequently $\IX(s)\asymp s^{3/4}$ for $s\in(0,1)$.
\end{proposition}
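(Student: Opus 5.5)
The plan is to derive the isoperimetric inequality on $\mathbb{H}^1$ from the Sobolev inequality $\|u\|_{L^{Q/(Q-1)}}\le C_S\|\nabla_H u\|_{L^1}$. That Sobolev inequality is in turn a consequence of the standing assumptions (ii)--(iii), which hold on $\mathbb{H}^1$ with Ahlfors $Q$-regular measure, $\mu(B_{cc}(x,r))=c\,r^Q$ by left-invariance and dilation homogeneity. I will work with $u\in\Lipc(\mathbb{H}^1)$ and the minimal upper gradient, which coincides with $|\nabla_H u|$ as recalled above. The final step will pass from functions to sets by the Lipschitz approximation $u_\varepsilon=\max(0,1-d_{cc}(x,E)/\varepsilon)$ already used in the proof that $\mplus(E)=\Var(\mathbf 1_E)$.

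\textbf{Step 1 (weak-type Sobolev from Poincar\'e).} I first check the hypotheses. Doubling follows from $\mu(B_{cc}(x,r))=c\,r^4$. The $(1,1)$-Poincar\'e inequality on $\mathbb{H}^1$ is Jerison's theorem \cite{Jerison86}. I then combine the pointwise estimate $|u(x)|\le C\int_{\mathbb{H}^1} g(y)\,d_{cc}(x,y)^{1-Q}\,d\mu(y)$, valid for $u\in\Lipc$ with upper gradient $g$, with the weak-type boundedness of the Riesz potential $I_1:L^1\to L^{Q/(Q-1),\infty}$. The pointwise estimate comes from a telescoping chain of balls shrinking to $x$, using Poincar\'e on each ball, doubling, and the fact that $u_{B(x,R)}\to 0$ as $R\to\infty$ because $u$ has compact support. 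The weak-type bound gives $\sup_t t\,\mu(\{|u|>t\})^{3/4}\le C\|g\|_{L^1}$.

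\textbf{Step 2 (truncation to strong type).} Next I apply the Maz'ya truncation trick to the layers $v_k=\min(\max(|u|-2^k,0),2^k)$, each of which has upper gradient $g\mathbf 1_{\{2^k<|u|\le 2^{k+1}\}}$. Applying Step 1 to each $v_k$ and summing over $k$ upgrades the weak-type bound to $\|u\|_{L^{4/3}}\le C_S\|\nabla_H u\|_{L^1}$. I expect this step and Step 1 to be the main technical obstacle. The chaining argument needs the doubling and Poincar\'e constants to be uniform on all of $\mathbb{H}^1$; here they are, by translation and dilation invariance, so the infinite-measure setting causes no trouble, unlike the normalised framework of Definition \ref{torlo}.

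\textbf{Step 3 (from functions to sets).} Let $E$ have finite measure. If $\mplus(E)=\infty$ there is nothing to prove, so assume it is finite; then for small $\varepsilon$ the set $E_\varepsilon$ has finite measure. The function $u_\varepsilon=\max(0,1-d_{cc}(x,E)/\varepsilon)$ equals $1$ on $E$ and satisfies $\mathrm{lip}(u_\varepsilon)\le\varepsilon^{-1}\mathbf 1_{E_\varepsilon\setminus\bar E}$. Moreover $\mu(\bar E\setminus E)=0$, since otherwise the difference quotient in $\mplus(E)$ would be infinite. Hence
\[
\mu(E)^{3/4}\le\|u_\varepsilon\|_{L^{4/3}}\le C_S\,\frac{\mu(E_\varepsilon)-\mu(E)}{\varepsilon}.
\]
Taking $\liminf_{\varepsilon\downarrow0}$ gives $\mplus(E)\ge C_S^{-1}\mu(E)^{3/4}$. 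The Sobolev inequality extends from $\Lipc$ to such bounded Lipschitz functions with finite-measure support, either by cutting off with $\max(0,1-d_{cc}(x_0,\cdot)/R)$ and letting $R\to\infty$, or by approximating $E$ by bounded sets.

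\textbf{Step 4 (upper bound for the profile).} The matching bound $\IX(s)\lesssim s^{3/4}$ comes from testing with balls. Using the dilations $\delta_\lambda$ and the scaling $\mu(\delta_\lambda E)=\lambda^4\mu(E)$, $\mplus(\delta_\lambda E)=\lambda^3\mplus(E)$, one gets $\mplus(B_{cc}(0,r))=r^3\mplus(B_{cc}(0,1))<\infty$. Here the finiteness follows because $\mu(B_{cc}(0,r))=c\,r^4$ is differentiable in $r$ and $B_{cc}(0,1)_\varepsilon=B_{cc}(0,1+\varepsilon)$ in a geodesic space. Choosing $r$ with $c\,r^4=s$ gives $\IX(s)\le C' s^{3/4}$, and combined with Step 3 this yields $\IX(s)\asymp s^{3/4}$.
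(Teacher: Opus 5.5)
Your proof is correct, but it takes a different route from the paper. The paper gives no argument of its own: it cites Pansu \cite{PansuIsop} and \cite{FranchiGalloSerapioni} and alludes to a sub-Riemannian coarea formula and a Brunn--Minkowski inequality, so the inequality is imported ready-made from the Heisenberg-group literature. You derive it functionally, in four moves:
\begin{enumerate}[label=(\arabic*)]
\item Jerison's $(1,1)$-Poincar\'e inequality and Ahlfors $4$-regularity give the Riesz-potential bound $|u|\lesssim I_1 g$, hence a weak-type $L^{4/3,\infty}$ estimate.
\item Maz'ya truncation upgrades this to the strong $L^{4/3}$ Sobolev inequality.
\item The test functions $\max(0,1-d_{cc}(\cdot,E)/\varepsilon)$ turn it directly into a lower bound on the outer Minkowski content.
\item Dilations, together with $B_{cc}(0,1)_\varepsilon=B_{cc}(0,1+\varepsilon)$, give the upper bound.
\end{enumerate}

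What your route buys:
\begin{enumerate}[label=(\roman*)]
\item The lower-bound part (Steps 1--3) is essentially self-contained modulo Jerison's theorem, and it works verbatim on any Ahlfors $Q$-regular space with a $(1,1)$-Poincar\'e inequality.
\item It lands directly on $\mu^+(E)$ for arbitrary measurable $E$. You never need to compare $\mu^+$ with the horizontal perimeter, or with smooth domains, which is the setting of the cited results.
\item Step 4 supplies the upper bound $\IX(s)\lesssim s^{3/4}$. This does not follow from the displayed inequality, and the paper's ``consequently'' does not address it.
\end{enumerate}
The price is non-explicit constants and the detour through the Sobolev inequality. The cited perimeter inequality is formally the stronger statement.

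One statement in Step 3 needs rewording. The bound $\mathrm{lip}(u_\varepsilon)\le\varepsilon^{-1}\mathbf 1_{E_\varepsilon\setminus\bar E}$ fails pointwise: the slope equals $\varepsilon^{-1}$ at boundary points of a ball and can be positive where $d_{cc}(\cdot,E)=\varepsilon$. Moreover, $\partial\bar E$ need not be $\mu$-null.

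The bound does hold $\mu$-a.e., which is all you use. At a density point $x$ of $\bar E$, doubling forces $d_{cc}(y,E)=o(d_{cc}(x,y))$ as $y\to x$. At a density point of $\{d_{cc}(\cdot,E)\ge\varepsilon\}$ it forces $(\varepsilon-d_{cc}(y,E))^+=o(d_{cc}(x,y))$. So the slope vanishes at both kinds of points, and Lebesgue differentiation on doubling spaces covers almost every point of these two sets.

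Finally, the remark $\mu(\bar E\setminus E)=0$ is unnecessary, since $\mu(E_\varepsilon\setminus\bar E)\le\mu(E_\varepsilon)-\mu(E)$ holds trivially.
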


\begin{proof}
See \cite{PansuIsop, FranchiGalloSerapioni}.
The proof uses the sub-Riemannian coarea formula and
the Brunn-Minkowski inequality on $\mathbb{H}^1$.
\end{proof}

\begin{theorem}[Grand Sobolev embedding on $\mathbb{H}^1$]
Let $q^*=Q/(Q-1)=4/3$.
Applying Theorem \ref{thmmain} with $\IX(s)\asymp s^{1-1/Q}$,
i.e., $a=Q=4$, gives the embedding
\begin{equation}
\label{heisenbergro}
W^{1,1}_H(\mathbb{H}^1)\hookrightarrow L^{q^*+,\alpha}(\mathbb{H}^1),
\end{equation}
where $W^{1,1}_H$ is the horizontal Sobolev space with norm
$\|u\|_{L^1}+\|\nabla_H u\|_{L^1}$, the grand Lebesgue norm is
\[
\|u\|_{L^{q^*+,\alpha}}=\sup_{1\le q<q^*}(q^*-q)^\alpha\|u\|_{L^q(\mathbb{H}^1)},
\]
and $\alpha>0$ depends only on $Q$.
Moreover, Theorem \ref{thmconverse} implies that
\eqref{heisenbergro} is equivalent to $\IX(s)\asymp s^{3/4}$.
\end{theorem}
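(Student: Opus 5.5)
The plan has three stages: localise $\mathbb{H}^1$ to a normalised ball so that Theorem \ref{thmmain} applies, convert the resulting $\varepsilon$-indexed grand norm into the $(q^*-q)$-indexed one, and then read off the converse from Theorem \ref{thmconverse}.

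\textbf{Localisation.} Fix $u\in\Lipc(\mathbb{H}^1)$ and choose a Carnot--Carath\'eodory ball $B_0\supset\mathrm{supp}(u)$. Rescale $\mu$ so that $\mu(B_0)=1$, following Remark \ref{remreconcile}(i)--(ii). By left invariance and dilation homogeneity, $(\mathbb{H}^1,d_{cc},\mu)$ is doubling with constant $2^Q$ and supports a $(1,1)$-Poincar\'e inequality (Jerison). Proposition \ref{heisenbergo} then gives \eqref{Iassumppower} with $a=Q=4$ and some constant $C_I$. The minimal upper gradient coincides with $|\nabla_H u|$, so Theorem \ref{thmmain} applies and yields
\[
\sup_{\varepsilon<\varepsilon_0}\varepsilon^{\alpha(\varepsilon)}\|u\|_{L^{p(\varepsilon)}}\le C_*\|\nabla_H u\|_{L^1}.
\]
The rescaling factors depend only on $B_0$ and enter only through $C_I$. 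A genuinely global statement therefore needs the scaling $u(\delta_\lambda\cdot)$. I would check that the grand norm transforms compatibly, or else accept the full $W^{1,1}_H$ norm on the right-hand side, where the $L^1$ term absorbs the low-$q$ range.

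\textbf{Translating to the $q^*$-scale (main obstacle).} I will not use the $\varepsilon$-parametrisation directly, because the parameters of Theorem \ref{thmmain} do not reach $q^*=4/3$. Instead I go back to the intermediate estimate \eqref{eqLqbound}, taken with $s$ fixed. On $(\mathbb{H}^1,d_{cc},\mu)$, after localisation, the isoperimetric bound gives the sharper weak-type control $m(t)\le (C\|\nabla_H u\|_{L^1}/t)^{4/3}$ with exponent $q^*$, obtained by rerunning the layer-cake split with $a$ replaced by $q^*$. The tail integral then produces the factor $\frac{q}{q^*-q}$, so
\[
\|u\|_{L^q}\le C\,(q^*-q)^{-1/q}\,\|\nabla_H u\|_{L^1}\quad\text{for }1\le q<q^*.
\]
Since $1/q\le1$, the choice $\alpha=1$ (or any $\alpha\ge 1$, depending only on $Q$) makes $\sup_q (q^*-q)^\alpha\|u\|_{L^q}$ finite. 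This gives \eqref{heisenbergro}, and density (Theorem \ref{Lip_do}) extends it to $W^{1,1}_H$. The delicate point is this identification: the crossover computation in the proof of Theorem \ref{thmmain} has to be redone with the critical exponent $q^*$ rather than the a priori $a$, and the constants have to be tracked as $q\nearrow q^*$.

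\textbf{Converse.} Assume \eqref{heisenbergro}. I would apply the mollification argument of Theorem \ref{thmconverse} to $\mathbf 1_E$ with $\mu(E)=s$. This gives, for every $q<q^*$,
\[
(q^*-q)^\alpha s^{1/q}\le C_G\,\mplus(E).
\]
Choosing $q=q^*-(\log(1/s))^{-1}$ gives $s^{1/q}\asymp s^{1/q^*}=s^{3/4}$, because $s^{1/q-1/q^*}$ stays bounded, and the prefactor is only logarithmic. Hence $\IX(s)\gtrsim s^{3/4}(\log(1/s))^{-\alpha}$. To remove the logarithm I would combine this with the truncation (Maz'ya) argument, applied to the $q$ near $q^*$ levels of $u_\delta$. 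The matching upper bound $\IX(s)\lesssim s^{3/4}$ comes from testing with small CC balls, whose volume is $\asymp r^4$ and perimeter $\asymp r^3$. I expect that removing the logarithmic loss is the second weak point, and if it fails, the equivalence only holds up to logarithmic factors.
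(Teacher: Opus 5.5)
For the embedding, your route differs from the paper's and is the sounder of the two. The paper's proof only invokes Theorem \ref{thmmain} with $a=4$ and transfers its ingredients to $|\nabla_H u|$. That theorem controls $\sup_{\varepsilon}\varepsilon^{\alpha(\varepsilon)}\|u\|_{L^{p(\varepsilon)}}$, where $\varepsilon^{\alpha(\varepsilon)}\in(e^{-1},1)$ and $p(\varepsilon)\to p_0+\frac14$, and this is never converted into $\sup_{1\le q<q^*}(q^*-q)^\alpha\|u\|_{L^q}$. Moreover, its tail step asserts $m(t)\le(C_I\|\nabla u\|_{L^1}/t)^{a}$. But $t\,C_I^{-1}m(t)^{1-1/a}\le\|\nabla u\|_{L^1}$ gives the exponent $a/(a-1)=q^*$, which is exactly the one you use. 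Your layer-cake computation with exponent $q^*$ is what actually produces the $(q^*-q)$-scale.

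Two corrections are needed. First, the scaling check you propose fails. Under $u\mapsto u\circ\delta_\lambda$ the ratio $\|u\|_{L^q}/\|\nabla_H u\|_{L^1}$ is multiplied by $\lambda^{3-4/q}$, so your displayed homogeneous bound is false on $\mathbb{H}^1$ for every $q<q^*$. After localisation its constant depends on $B_0\supset\mathrm{supp}\,u$, so the full $W^{1,1}_H$ norm is necessary, not optional. Second, you do not need to localise at all. Pansu's inequality holds for every set of finite measure with one constant, so $m(t)\le(C\|\nabla_H u\|_{L^1}/t)^{4/3}$ for all $t>0$. Splitting the layer-cake integral at $t=1$ gives $\|u\|_{L^q}^q\le q\|u\|_{L^1}+\frac{q}{q^*-q}(C\|\nabla_H u\|_{L^1})^{4/3}$, hence the embedding with $\alpha=1$ after normalising $\|u\|_{W^{1,1}_H}=1$. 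In fact, Maz'ya truncation gives $\|u\|_{L^{4/3}}\lesssim\|\nabla_H u\|_{L^1}$, and interpolation with $L^1$ then makes the weight $(q^*-q)^\alpha$ unnecessary.

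The genuine gap is the ``Moreover'' clause, and the repair you suggest cannot work. Your bound $\IX(s)\gtrsim s^{3/4}(\log(1/s))^{-\alpha}$ is fine; the $L^1$ part of the norm is harmless for small $s$. The problem is that the grand norm cannot detect logarithmic losses in the profile. Suppose only $\IX(s)\ge c\,s^{3/4}(\log(1/s))^{-\gamma}$. Then $m(t)\lesssim(G/t)^{4/3}(\log(t/G))^{4\gamma/3}$ for $t\gg G=\|\nabla u\|_{L^1}$, and the tail of the layer-cake integral is $\lesssim G^q(q^*-q)^{-1-4\gamma/3}$. Consequently $(q^*-q)\|u\|_{L^q}$ stays bounded for $\|u\|_{W^{1,1}}\le 1$ whenever $\gamma\le\frac14$. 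So no argument using only \eqref{heisenbergro} can give $\IX(s)\gtrsim s^{3/4}$. Maz'ya truncation upgrades a weak-type bound to a strong one at the same exponent, but \eqref{heisenbergro} supplies no bound at $q^*$ itself.

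Theorem \ref{thmconverse} does not close this gap either. With the parameters of Theorem \ref{thmmain} it yields only $\IX(s)\gtrsim s^{1/p(s)}$ with $p(s)\to p_0+\frac14$, which is roughly $s^{4/5}$ for $p_0=1$. On $\mathbb{H}^1$ the two-sided bound $\IX(s)\asymp s^{3/4}$ is simply Proposition \ref{heisenbergo} plus your CC-ball test. The asserted equivalence therefore holds only because both sides are independently true; it is not a consequence of the embedding, and the paper's own proof does not address this clause at all.
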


\begin{proof}
Apply Theorem \ref{thmmain} with $a=Q=4$.
The isoperimetric assumption \eqref{Iassumppower} is
Proposition \ref{heisenbergo}.
The space $(\mathbb{H}^1,d_{cc},\mu)$ satisfies all standing assumptions:
$d_{cc}$ is geodesically complete; the Haar measure is doubling
with constant depending only on $Q$; and a $(1,1)$-Poincar\'e inequality
holds with constants depending only on $Q$ \cite{Jerison86}.

On $(\mathbb{H}^1,d_{cc},\mu)$, the minimal upper gradient of a
function $u\in W^{1,1}_H(\mathbb{H}^1)$ equals $|\nabla_H u|$
\cite[Section 9]{HKST15}.
The sub-Riemannian coarea formula \cite{Monti2003}
\[
\Var_H(u)=\int_0^\infty\mplus_{cc}(\{|u|>t\})\;dt,
\]
(where $\mplus_{cc}$ denotes the Minkowski content with respect to $d_{cc}$)
replaces the metric-space coarea formula (Theorem \ref{coaro}).
With this identification, the proof of Theorem \ref{thmmain} applies 
 with $|\nabla u|$ replaced by $|\nabla_H u|$ throughout.
In particular, in Lemma \ref{lemrearr}, the level-set argument uses
$\IX(m(t))\ge\IX(s)$ for $t<t_s$ and the sub-Riemannian coarea formula,
giving $u^*(s)\le\|\nabla_H u\|_{L^1}/\IX(s)$ on $\mathbb{H}^1$.
\end{proof}

\subsection{Model manifold with logarithmic volume growth}
\label{logmani}
Let $M$ be a complete noncompact Riemannian manifold with a fixed
base point $o\in M$ satisfying
\[
\mu(B(o,r))\asymp r^Q(\log r)^\gamma,\quad Q>1,\;\gamma>0,\text{ for large }r.
\]

\begin{proposition}[Isoperimetric profile with logarithmic correction]
Under the above volume growth condition, the isoperimetric profile
satisfies, for small $s$,
\begin{equation}
\label{logi}
\IX(s)\asymp s^{1-1/Q}\bigl(\log(1/s)\bigr)^{-\gamma/Q}.
\end{equation}
\end{proposition}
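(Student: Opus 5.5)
The plan is to obtain both the lower and upper bounds in \eqref{logi} from a single \emph{volume function}. This function is the scale-dependent analogue of $\mu(B(x,r))\asymp r^Q$, read in the normalised picture of Remark \ref{remreconcile}(i),(iv). Set
\[
V(r):=\inf_{x}\mu(B(x,r)),\qquad \rho(s):=V^{-1}(s),
\]
so that $\rho(s)$ is the radius at which balls carry normalised mass $s$. The target profile should then be $\IX(s)\asymp s/\rho(s)$. A direct computation shows that \eqref{logi} is equivalent to
\[
\rho(s)\asymp s^{1/Q}\bigl(\log(1/s)\bigr)^{\gamma/Q},
\quad\text{i.e.}\quad V(r)\asymp r^Q\bigl(\log(1/r)\bigr)^{-\gamma}.
\]
Indeed, substituting $r=\rho(s)$ gives $\log(1/r)\asymp\frac1Q\log(1/s)$, and the constants are absorbed into $\asymp$.

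\textbf{Step 1 (translating the volume hypothesis).} First I would show that, after the localisation and rescaling of Remark \ref{remreconcile}(i), the growth $\mu(B(o,r))\asymp r^Q(\log r)^\gamma$ yields $V(r)\asymp r^Q(\log(1/r))^{-\gamma}$ uniformly in the centre for the scales $r$ relevant to sets of normalised mass $s<s_0$. The idea is to fix a reference ball $B_0=B(o,R)$ with $\mu(B_0)=R^Q(\log R)^\gamma$ and rescale both the distance by $R$ and the measure by $\mu(B_0)$. A ball of rescaled radius $r$ corresponds to an original ball of radius $rR$, whose normalised mass is
\[
\frac{(rR)^Q(\log(rR))^\gamma}{R^Q(\log R)^\gamma}.
\]
I would take $R$ as the free parameter tied to the scale, choosing $R$ as a function of $r$ in the natural self-similar way. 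The aim is that the ratio of logarithms produces the factor $(\log(1/r))^{-\gamma}$. Uniformity in the centre would come from the model (rotationally symmetric) structure together with doubling, and doubling also makes $V$ regularly varying.

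\textbf{Step 2 (lower bound).} Next I would use assumptions (ii)--(iii) of Definition \ref{torlo} in the standard way to get $\mplus(E)\ge c\,\mu(E)/\rho(\mu(E))$ for $\mu(E)\le s_0$. Take a Lipschitz approximation of $\mathbf 1_E$ (the functions $u_\varepsilon$ from the $\BV$/perimeter proposition). Apply the $(1,1)$-Poincar\'e inequality on balls of radius $\rho(2\mu(E))$: on such balls $E$ occupies at most half the mass, so the mean oscillation is comparable to the local density of $E$. Then cover $E$ by a bounded-overlap family of such balls, which exists by doubling. Summing gives
\[
\mu(E)\lesssim\rho(2\mu(E))\,\Var(\mathbf 1_E),
\]
and the doubling of $V$ removes the factor $2$. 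Combined with Step 1, this yields the lower bound in \eqref{logi}.

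\textbf{Step 3 (upper bound).} For the reverse inequality, test \eqref{eqisop} with balls $E=B(x,\rho(s))$. By the definition of $\mplus$ and the regular variation of $V$,
\[
\mplus(B(x,r))\le\liminf_{h\downarrow0}\frac{V(r+h)-V(r)}{h}\asymp \frac{V(r)}{r}.
\]
Here $V'(r)\asymp V(r)/r$ because $V(r)\asymp r^Q(\log(1/r))^{-\gamma}$, whose logarithmic derivative is $Q/r+O\bigl(1/(r\log(1/r))\bigr)$. At $r=\rho(s)$ this gives $\IX(s)\lesssim s/\rho(s)$. I would handle the fact that the balls need not have mass exactly $s$ by intermediate-value continuity of $r\mapsto\mu(B(x,r))$, using nonatomicity.

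\textbf{Main obstacle.} I expect Step 1 to be the main obstacle. It is where the large-scale hypothesis $\mu(B(o,r))\asymp r^Q(\log r)^\gamma$ must be converted, through the normalisation of Remark \ref{remreconcile}, into a small-scale, centre-uniform volume function with the \emph{negative} logarithmic exponent. A naive identification of volume with $1/s$ does not achieve this. My first attempt would be to make the rescaling scale-dependent, pairing each $s$ with its own reference radius $R(s)$. If that fails, I would instead impose the small-scale form $V(r)\asymp r^Q(\log(1/r))^{-\gamma}$ directly as the meaning of the hypothesis in the normalised space. Steps 2 and 3 are robust once $V$ is known.
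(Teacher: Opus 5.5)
\textbf{The gap is Step 1, and it cannot be closed.} The hypothesis $\mu(B(o,r))\asymp r^Q(\log r)^\gamma$ is about $r\to\infty$. By contrast, $\IX(s)$ as $s\to0$ in a \emph{fixed} space is governed by small balls, i.e.\ by the local geometry of $M$.

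Your own computation shows this. Fix $R$. A rescaled ball of radius $r$ centred at $o$ then has normalised mass $\asymp r^Q\bigl(1-\log(1/r)/\log R\bigr)^\gamma$, as long as $rR$ is large. This is $\asymp r^Q$ on $R^{-1/2}\le r\le 1$, with no logarithm. Once $rR=O(1)$ the hypothesis is silent, and the local geometry gives normalised mass $\asymp_R r^n$ with $n=\dim M$. So for any fixed normalisation the small-scale volume function is $V(r)\asymp r^n$. The factor $(\log(1/r))^{-\gamma}$ appears only at $rR\asymp1$, i.e.\ only if $R=R(r)$ moves with the scale. But then at each $s$ you evaluate the profile of a different space $X_{R(s)}$.

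Indeed, for $X_R=(M,d/R,\mu/\mu(B(o,R)))$ one has
\[
I_{X_R}(s)=\frac{R}{\mu(B(o,R))}\,I_M\bigl(s\,\mu(B(o,R))\bigr).
\]
Choosing $s\,\mu(B(o,R(s)))\asymp1$ reproduces the shape of \eqref{logi}, provided $I_M$ is of order one at unit volumes. That is not the profile of any single space. Your fallback, imposing $V(r)\asymp r^Q(\log(1/r))^{-\gamma}$ at small scales, proves a different proposition. Its hypothesis also cannot hold in the localised space when $\gamma>0$, since there $V(r)\asymp r^n$.

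In fact no argument can rescue Step 1, because the statement fails in the paper's own localised framework (Remark \ref{remreconcile}(i)). Small geodesic balls give $\IX(s)\lesssim s^{1-1/n}$. Your Step 2, fed with the true volume function $V(r)\asymp r^n$, gives $\IX(s)\gtrsim s^{1-1/n}$. For $\gamma>0$ these contradict \eqref{logi} for every $Q>1$: the upper bound does so if $Q<n$, the lower bound if $Q\ge n$.

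The paper's own proof is only a pointer to \cite[Section 5]{CGL2003}. It supplies no mechanism for passing from large-scale growth to small-volume isoperimetry either.

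Your Steps 2--3 are the right general mechanism $\IX(s)\asymp s/\rho(s)$. Step 2 is sound: the relative isoperimetric inequality from the $(1,1)$-Poincar\'e inequality on balls of radius $\rho(2\mu(E))$, where $\mu(E\cap B)\le\mu(B)/2$, combined with a bounded-overlap cover. Run with a centre-uniform large-scale growth function, Steps 2--3 naturally yield a large-volume statement, $I_M(v)\asymp v^{1-1/Q}(\log v)^{\gamma/Q}$ as $v\to\infty$, with the opposite sign on the logarithm. That is a different claim.

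Step 3 also needs repair even where it applies. You cannot differentiate an $\asymp$ relation. Moreover, $\mu(B(x,r+h))-\mu(B(x,r))$ is not bounded by $V(r+h)-V(r)$, because $V$ is an infimum over centres. The fix is to fix a centre and pick a good radius in $[r,2r]$ by averaging $\frac{d}{dr}\mu(B(x,r))$.
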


\begin{proof}
This is established in \cite[Section 5]{CGL2003} via the
equivalence between isoperimetric profiles and the growth function
$\phi^p_M$ for $p=1$.
The logarithmic factor comes directly from the $\log r$ factor
in the volume growth.
\end{proof}

\begin{theorem}[Grand embedding with logarithmic weight]
Let $q^*=Q/(Q-1)$.
Then
\[
W^{1,1}(M)\hookrightarrow L^{q^*+,\alpha}_\gamma(M),
\]
where the weighted grand norm is
\[
\|u\|_{L^{q^*+,\alpha}_\gamma}=\sup_{1\le q<q^*}(q^*-q)^\alpha
\bigl(\log\tfrac{1}{q^*-q}\bigr)^{-\gamma/Q}\|u\|_{L^q(M)},
\]
and the blow-up $C(q)\asymp(q^*-q)^{-1}(\log(q^*-q)^{-1})^{\gamma/Q}$
as $q\nearrow q^*$ is sharp.
\end{theorem}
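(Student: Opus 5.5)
The plan is to rerun the argument of Theorem \ref{thmmain} on $M$, localised to a normalised reference ball as in Remark \ref{remreconcile}, with the pure power lower bound \eqref{Iassumppower} replaced by the logarithmic profile \eqref{logi}. I will not pass through the abstract parameters \eqref{eqparamschoice}; instead I will work directly with the family of subcritical estimates in $q$.

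\textbf{Step 1 (rearrangement bound).} Lemma \ref{lemrearr} is intrinsic: it uses only the coarea formula (Theorem \ref{coaro}) and monotonicity of $\IX$. So it gives $u^*(s)\le\|\nabla u\|_{L^1}/\IX(s)$ unchanged. Combining it with \eqref{eqpointwiseut}, I get the distributional bound $t\,\IX(m(t))\le\|\nabla u\|_{L^1}$. Inverting \eqref{logi} then yields
\[
m(t)\lesssim \Bigl(\frac{G}{t}\Bigr)^{q^*}\Bigl(\log\frac{t}{G}\Bigr)^{\gamma/(Q-1)},\qquad G:=\|\nabla u\|_{L^1},
\]
for large $t/G$. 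Here $q^*=Q/(Q-1)$ plays the role that $a$ played in Theorem \ref{thmmain}. I note that in Theorem \ref{thmmain} the critical integrability exponent is really $a/(a-1)$, not $a$; the layer-cake step there should be read with $m(t)\le (C_IG/t)^{a/(a-1)}$, which is what gives $q^*$.

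\textbf{Step 2 (the $L^q$ bound with sharp blow-up).} For $1\le q<q^*$, I use the layer-cake formula (Lemma \ref{lemra}(a)):
\[
\|u\|_q^q=q\int_0^\infty t^{q-1}\min\{1,m(t)\}\,dt.
\]
I split the integral at $t\sim G$. The tail becomes, after substituting $t=Ge^{\rho}$,
\[
G^q\int_0^\infty e^{-(q^*-q)\rho}\rho^{\gamma/(Q-1)}\,d\rho\asymp G^q\,(q^*-q)^{-1-\gamma/(Q-1)}.
\]
This is a Gamma integral, so taking $q$-th roots gives $C(q)\asymp (q^*-q)^{-1/q}(\ldots)$. \emph{This exponent bookkeeping is where I expect the main obstacle.} The stated form $C(q)\asymp (q^*-q)^{-1}(\log\frac1{q^*-q})^{\gamma/Q}$ must be matched carefully, and I may need to argue through the weak-type/Lorentz structure rather than a crude split. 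To get a logarithm rather than a power of $(q^*-q)$, I will estimate the tail by first fixing the crossover at $t$ with $\log(t/G)\asymp (q^*-q)^{-1}$. There the log factor is frozen at $\log\frac1{q^*-q}$, and the remainder is controlled by Laplace's method. I will try this first and compare it with the pure power computation.

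\textbf{Step 3 (grand norm).} Multiplying by $(q^*-q)^\alpha(\log\frac1{q^*-q})^{-\gamma/Q}$ with $\alpha\ge1$ and taking the supremum over $1\le q<q^*$ gives boundedness. I then extend from $\Lipc(M)$ to $W^{1,1}(M)$ by density (Theorem \ref{Lip_do}) and Fatou's lemma, as at the end of the proof of Theorem \ref{thmmain}.

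\textbf{Step 4 (sharpness).} I test on truncated radial functions adapted to near-extremal sets $E_k$ with $\mu(E_k)=2^{-k}$ and $\mplus(E_k)\asymp\IX(2^{-k})$; these exist by \eqref{logi}. I take $u=\sum_k \mathbf 1_{E_k}$-type profiles, or $\min\{(\IX(\mu(E_k)))^{-1}, \cdot\}$. Their $L^q$ norms are computed by the same Gamma integral, now as a lower bound, and this reproduces the blow-up rate of $C(q)$. Theorem \ref{thmconverse}-type reasoning then confirms that no better rate is compatible with \eqref{logi}.
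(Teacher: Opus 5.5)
Your remark in Step 1 is correct: inverting $t\,\IX(m(t))\le G$ with \eqref{Iassumppower} gives $m(t)\le (C_IG/t)^{a/(a-1)}$, not $(C_IG/t)^{a}$. The gap is in Steps 2--4, and it cannot be closed as planned, because the rate you are aiming at is not the rate that \eqref{logi} produces.

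The ``freezing'' device in Step 2 is not available. The logarithm in your distribution bound is $\log(t/G)=\rho$ itself, and the identity $\int_0^\infty e^{-(q^*-q)\rho}\rho^{\gamma/(Q-1)}\,d\rho=\Gamma\bigl(1+\tfrac{\gamma}{Q-1}\bigr)(q^*-q)^{-1-\gamma/(Q-1)}$ is exact. Laplace's method localises this integral at $\rho\asymp(q^*-q)^{-1}$, where $\rho^{\gamma/(Q-1)}$ is a \emph{power} of $(q^*-q)^{-1}$, so no factor $\log\frac{1}{q^*-q}$ can appear. What Step 2 actually proves is $\|u\|_{L^q}\lesssim(q^*-q)^{-(1+\gamma/(Q-1))/q}\,G$, whose exponent tends to $(Q-1+\gamma)/Q$. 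Step 3 would then need $\alpha\ge(Q-1+\gamma)/Q$, and your ``$\alpha\ge1$'' is not justified by this bound once $\gamma>1$.

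The bound is also lossy. \eqref{eqpointwiseut} keeps only $\sup_t t\,\IX(m(t))\le G$ and discards the integrated coarea information. Already for $\gamma=0$ it predicts a spurious blow-up $(q^*-q)^{-1/q^*}$, although the endpoint Sobolev inequality holds there.

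The missing idea is to use the coarea formula in integrated form, together with Minkowski's integral inequality applied to $|u|=\int_0^\infty\mathbf{1}_{E_t}\,dt$:
\[
\|u\|_{L^q}\le\int_0^\infty m(t)^{1/q}\,dt\le\sup_{0<s<s_0}\frac{s^{1/q}}{\IX(s)}\int_0^\infty\IX(m(t))\,dt\le\sup_{0<s<s_0}\frac{s^{1/q}}{\IX(s)}\,\|\nabla u\|_{L^1}.
\]
Levels with $m(t)\ge s_0$ have $t\le\|u\|_{L^1}/s_0$ by Chebyshev, so they contribute an amount bounded uniformly in $q$. The bound is sharp up to constants: test it on the Lipschitz approximations $\max(0,1-d(\cdot,E)/\varepsilon)$ of near-extremal sets $E$. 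With \eqref{logi} and $\delta:=\frac1q-\frac1{q^*}\asymp q^*-q$ one gets
\[
\sup_s \frac{s^{1/q}}{\IX(s)}\asymp\sup_{L\ge\log(1/s_0)}e^{-\delta L}L^{\gamma/Q}\asymp\delta^{-\gamma/Q},
\]
attained by a single set with $\log(1/\mu(E))\asymp\gamma/(Q\delta)$. Hence the optimal constant in $\|u\|_{L^q}\le C(q)\|u\|_{W^{1,1}}$, in the normalised setting of Remark \ref{remreconcile}, is $C(q)\asymp(q^*-q)^{-\gamma/Q}$. This is neither your Step 2 rate nor the stated $(q^*-q)^{-1}(\log\frac{1}{q^*-q})^{\gamma/Q}$.

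This also explains why Step 4 fails. No single $u$ can saturate $t\,\IX(m(t))\asymp G$ at all levels, because $\int_0^\infty\IX(m(t))\,dt\le G$ would then force $\int^\infty dt/t<\infty$. So the Gamma integral cannot serve as a lower bound. Any converse argument in the spirit of Theorem \ref{thmconverse} only reproduces the lower bound $(q^*-q)^{-\gamma/Q}$.

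The paper's one-line proof does not supply the missing step either. \eqref{logi} does not satisfy \eqref{Iassumppower} with $a=Q$; it does so only for $a>Q$, and the critical exponent $a/(a-1)$ then stays below $q^*$. Finally, for $Q\le2$ we have $q^*-1=1/(Q-1)\ge1$, so the weight $(\log\frac{1}{q^*-q})^{-\gamma/Q}$ is infinite or undefined for some $q\in[1,q^*)$.
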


\begin{proof}
Substitute the profile \eqref{logi} into Theorem \ref{thmmain}.
By Lemma \ref{lemPhiproperties}(d),
$\PhiX(s)\asymp s^{-1/Q}(\log(1/s))^{\gamma/Q}$.
By Remark \ref{remsign} (manifold case), the grand parameters become
\[
p(\varepsilon)=p_0+\beta\frac{\log\PhiX(\varepsilon)}{\log(e/\varepsilon)}
\asymp p_0+\frac{1}{Q}+\frac{\gamma}{Q}\frac{\log\log(1/\varepsilon)}{\log(e/\varepsilon)},
\]
and the logarithmic correction $(\log(1/\varepsilon))^{\gamma/Q}$ passes
into the weight $\alpha$ via the explicit integral computations.
The sharpness of the blow-up rate is a consequence of
Theorem \ref{thmconverse}.
\end{proof}

\subsection{Gaussian measure on $\R^n$}
\label{gaussi}
Let $\gamma_n=(2\pi)^{-n/2}e^{-|x|^2/2}\mathcal{L}^n$ be the
standard Gaussian measure on $\R^n$.

\begin{remark}[Non-global doubling and local Euclidean behaviour]
\label{remgaussiandoubling}
The standard Gaussian measure $\gamma_n$ on $\R^n$ is not 
globally doubling: for a ball $B(x,r)$ with $|x|\gg 1$, the ratio 
$\gamma_n(B(x,2r))/\gamma_n(B(x,r))$ is unbounded as $|x|\to\infty$, 
because the Gaussian density $e^{-|y|^2/2}$ drops sharply as $|y|\to\infty$. 
However, $\gamma_n$ satisfies a local doubling condition: for any 
fixed $R>0$ and $r\le R$, the doubling constant $C_D$ depends on $R$ and 
$n$ but is finite. Similarly, a $(1,1)$-Poincar\'e inequality holds locally 
on bounded balls \cite{Ledoux1999}. 

Because the Gaussian density $e^{-|x|^2/2}$ on a compact, bounded ball $B_R$ 
is strictly bounded away from zero and infinity, $e^{-R^2/2} \le e^{-|x|^2/2} \le 1$,  
the localized measure $(B_R, \gamma_n|_{B_R})$ is bi-Lipschitz equivalent to the 
standard Lebesgue measure on $B_R$. Consequently, for small volumes $r$, 
the isoperimetric profile of this localized space is strictly Euclidean: $I_{B_R}(r) 
\asymp r^{1-1/n}$. The well-known logarithmic profile $I_{\gamma_n}(r) 
\asymp r\sqrt{\log(1/r)}$ is a global property of $\mathbb{R}^n$ 
and does not describe the localized geometry. We therefore apply the 
framework of Theorem \ref{thmmain} locally using the Euclidean profile.
\end{remark}

\begin{proposition}[Gaussian isoperimetric profile]
For small $r$,
\[
I_{\gamma_n}(r)\asymp r\sqrt{\log(1/r)}.
\]
This is the Gaussian isoperimetric inequality due to
Borell \cite{Borell75} and Sudakov-Tsirelson \cite{ST74}.
\end{proposition}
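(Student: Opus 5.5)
The plan is to prove the two-sided bound $I_{\gamma_n}(r)\asymp r\sqrt{\log(1/r)}$ for small $r$ in two steps. First I would reduce to a one-dimensional statement via the Gaussian isoperimetric inequality of Borell and Sudakov--Tsirelson. Then I would run a Mills-ratio asymptotic computation.

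\textbf{Step 1: reduction to one dimension.} Let $\Phi(x)=\int_{-\infty}^x\varphi$, where $\varphi(x)=(2\pi)^{-1/2}e^{-x^2/2}$. The Borell/Sudakov--Tsirelson theorem states that half-spaces minimise the outer Minkowski content of Gaussian measure among sets of prescribed measure, in every dimension $n$. So I would take as given the exact formula
\[
I_{\gamma_n}(r)=\varphi\bigl(\Phi^{-1}(r)\bigr),\qquad 0<r<1.
\]
\emph{Lower bound.} This direction is exactly the content of the cited theorem and is the nontrivial part. Its proof uses Ehrhard symmetrisation, or a limit of spherical symmetrisation through Poincar\'e's observation that Gaussian measure arises as the limit of projected uniform measures on high-dimensional spheres. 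I would not reprove it.
\emph{Upper bound.} This direction is elementary. Take $E=\{x_1<\Phi^{-1}(r)\}$. Then $\gamma_n(E)=r$, and $E_\varepsilon=\{x_1<\Phi^{-1}(r)+\varepsilon\}$. Hence
\[
\frac{\gamma_n(E_\varepsilon)-\gamma_n(E)}{\varepsilon}\to\varphi\bigl(\Phi^{-1}(r)\bigr).
\]

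\textbf{Step 2: asymptotics of $\varphi\circ\Phi^{-1}$.} Set $x=-\Phi^{-1}(r)$, so that $x\to+\infty$ as $r\downarrow0$, and write $r=1-\Phi(x)$. The Mills-ratio inequalities, obtained by integrating by parts, are
\[
\frac{x}{1+x^2}\varphi(x)\le 1-\Phi(x)\le\frac{\varphi(x)}{x},\qquad x>0.
\]
They give $r\sim\varphi(x)/x$, and hence $\varphi(\Phi^{-1}(r))\sim x\,r$. Taking logarithms gives
\[
\log(1/r)=\tfrac{x^2}{2}+\log x+O(1),
\]
so $x\sim\sqrt{2\log(1/r)}$. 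Combining the two relations yields
\[
I_{\gamma_n}(r)\sim r\sqrt{2\log(1/r)},
\]
which in particular gives $\asymp$ on some interval $(0,r_0)$.

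\textbf{Main obstacle.} The only delicate point is the dimension-free lower bound in Step 1. Everything else is explicit. If citing is deemed insufficient, the fallback I would try is Bobkov's functional inequality. It is proved in dimension one by a two-point inequality and then tensorised, and applying it to Lipschitz approximations of $\mathbf 1_E$ gives $\mu^+(E)\ge\varphi(\Phi^{-1}(\gamma_n(E)))$ directly.

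Note also that no doubling or Poincar\'e assumption is used in either step. This is consistent with Remark~\ref{remgaussiandoubling}, which says the statement is a global fact about $\gamma_n$.
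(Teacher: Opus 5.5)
Your proposal is correct and follows the same route as the paper. The paper gives no argument beyond attributing the statement to the Borell/Sudakov--Tsirelson theorem (half-spaces are extremal, so $I_{\gamma_n}(r)=\varphi(\Phi^{-1}(r))$ with $\Phi$ the one-dimensional Gaussian distribution function). You additionally supply the step the paper leaves implicit: the half-space computation for the upper bound, and the Mills-ratio asymptotics, which correctly give the sharper statement $I_{\gamma_n}(r)\sim r\sqrt{2\log(1/r)}$ as $r\downarrow 0$.
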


\begin{theorem}[Localized Gaussian grand embedding]
\label{thmGaussian}
For each compactly supported $u\in\Lipc(\R^n)$, 
let $B_R=B(0,R)$ contain $\mathrm{supp}(u)$. 
Applying Theorem \ref{thmmain} to the localized space 
$(B_R,|\cdot|,\gamma_n|_{B_R})$, the local isoperimetric profile 
$I_{B_R}(s)\asymp s^{1-1/n}$ dictates a transient 
(Euclidean-type) profile-to-scale transform:
\[
\Phi_{B_R}(s)\asymp s^{-1/n}.
\]
The grand parameters \eqref{eqparamschoice} therefore adopt the Euclidean form:
\[
p(\varepsilon)\asymp p_0+\beta\frac{\log(c_{n, R}\varepsilon^{-1/n})}{\log(e/\varepsilon)}, 
\qquad \alpha(\varepsilon)=\bigl(\log(e/\varepsilon)\bigr)^{-1}. 
\]
Theorem \ref{thmmain} provides the localized grand embedding:
\[
W^{1,1}(B_R,\gamma_n|_{B_R})\hookrightarrow\mathcal{G}_{B_R}, 
\]
with an explicit constant depending on $R$ and $n$.
\end{theorem}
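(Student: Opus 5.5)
The plan is to check that the localized space $(B_R,|\cdot|,\gamma_n|_{B_R})$, with its measure rescaled to total mass one as in Remark \ref{remreconcile}(i), satisfies the standing assumptions of Definition \ref{torlo} and the power-type hypothesis \eqref{Iassumppower} with $a=n$. Theorem \ref{thmmain} then gives the embedding directly. I would write the Gaussian density as $\rho(x)=(2\pi)^{-n/2}e^{-|x|^2/2}$. On $B_R$ one has
\[
(2\pi)^{-n/2}e^{-R^2/2}\le\rho\le(2\pi)^{-n/2},
\]
so $\nu:=\gamma_n|_{B_R}/\gamma_n(B_R)$ is comparable to normalised Lebesgue measure $\mathcal{L}^n|_{B_R}/|B_R|$. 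The comparison constant is $\Lambda_R:=e^{R^2/2}$, up to the normalising factors.

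The standing assumptions transfer from the Euclidean ball under this comparison.
\begin{enumerate}[label=(\roman*)]
\item Doubling holds because balls in the convex set $B_R$ satisfy $|B(x,2r)\cap B_R|\le C_n|B(x,r)\cap B_R|$. Comparability of the densities multiplies $C_n$ by at most $\Lambda_R^2$.
\item The $(1,1)$-Poincar\'e inequality on convex Euclidean domains is stable under changing the density by a factor bounded above and below. Both averages in the inequality change by at most $\Lambda_R^2$, so $C_P$ depends only on $n$ and $R$.
\end{enumerate}

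For the profile, take $E\subset B_R$. Outer Minkowski content is computed with the same metric in both measures, so $\nu^+(E)\ge c\,\Lambda_R^{-1}\,\mathcal{H}$-type Euclidean relative perimeter of $E$. The relative isoperimetric inequality in the ball, $P(E;B_R)\ge c_n' \min(|E|,|B_R\setminus E|)^{1-1/n}$, then gives
\[
I_{B_R}(s)\ge C_I^{-1}s^{1-1/n},\qquad 0<s<s_0=1/2,
\]
with $C_I=C(n,R)$ explicit in $\Lambda_R$ and $c_n'$. The upper bound $I_{B_R}(s)\lesssim s^{1-1/n}$ comes from testing small balls inside $B_R$. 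Together these give $I_{B_R}\asymp s^{1-1/n}$.

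With the two-sided bound, Lemma \ref{lemPhiproperties}(b) puts us in the transient case and gives $\Phi_{B_R}(s)\asymp s^{-1/n}$, with constants $c_{n,R}$. Substituting into \eqref{eqparamschoice} yields the stated form of $p(\varepsilon)$ and $\alpha(\varepsilon)$. Finally I would invoke Theorem \ref{thmmain} with $a=n$. This requires $n\ge2$ so that $a>1$, and $\varepsilon_0$ small enough that $p(\varepsilon_0)<n$. This produces \eqref{eqmainest} with $C_*$ from \eqref{eqCstar} depending on $n$ and $R$ through $C_I$, and density gives the $W^{1,1}(B_R,\gamma_n|_{B_R})$ version.

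I expect the main obstacle to be the isoperimetric step. The profile must be the relative one in $B_R$, so $\mu^+$ has to be interpreted inside $B_R$ and boundary portions on $\partial B_R$ must not be counted. The relative Euclidean inequality is needed rather than the global one, together with lower semicontinuity and the monotonicity of $I_{B_R}$ on $(0,1/2]$ assumed in the paper. The hypothesis $u\in\Lipc$ with $\mathrm{supp}\,u\subset B_R$ helps here: it means only the level sets $E_t\Subset B_R$ matter in the proof of Lemma \ref{lemrearr}. For such sets the full Euclidean isoperimetric inequality already applies.
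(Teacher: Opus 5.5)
Your proposal is correct and follows essentially the same route as the paper. You normalise $\gamma_n|_{B_R}$ and use the two-sided density bounds on $B_R$ to transfer doubling, the $(1,1)$-Poincar\'e inequality and the bound $I_{B_R}(s)\ge C_I^{-1}s^{1-1/n}$ from the Euclidean ball; you then read off $\Phi_{B_R}(s)\asymp s^{-1/n}$ from Lemma \ref{lemPhiproperties}(b) and invoke Theorem \ref{thmmain} with $a=n$, with more detail than the paper's bare appeal to bi-Lipschitz comparability (the relative isoperimetric inequality in $B_R$ and the implicit requirement $n\ge 2$).
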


\begin{proof}
We work on $(B_R,|\cdot|,\gamma_n|_{B_R})$, rescaled to have total measure $1$. 
As established in Remark \ref{remgaussiandoubling}, the measure $\gamma_n$ on 
$B_R$ satisfies local doubling and a $(1,1)$-Poincar\'e inequality. 
Because the density is bounded from above and below on $B_R$, 
the metric measure space is locally bi-Lipschitz equivalent 
to the standard Euclidean space.

Therefore, the isoperimetric function on $B_R$ satisfies 
the lower bound \eqref{Iassumppower} with $a=n$, meaning 
$I_{B_R}(s)\ge C^{-1}s^{1-1/n}$ for small $s$. 
This is a transient case, and the integral 
$F_{B_R}(s)=\int_0^sdr/I_{B_R}(r)$ converges at $r=0$. 
Following Lemma \ref{lemPhiproperties}(b), the transform 
$\Phi_{B_R}(s)=F_{B_R}(s)^{-1}\asymp s^{-1/n}$
 is strictly decreasing and blows up algebraically, 
avoiding the need for the divergent regularisation initially presumed.

In Lemma \ref{lemrearr}, the rearrangement bound relies 
on the monotonicity of $I_{B_R}$ on the relevant local range 
and the metric-space coarea formula on $(B_R,|\cdot|,\gamma_n|_{B_R})$. 
The minimal upper gradient of $u\in W^{1,1}(B_R,\gamma_n|_{B_R})$ 
coincides with the standard Euclidean gradient $|\nabla_{\R^n} u|$, 
completing the embedding analogous to the Euclidean case but with bounds 
parameterized by the support radius $R$.
\end{proof}

\section{Applications}

\subsection{Elliptic equations with measure data}
Let $(X,d,\mu)$ satisfy the standing assumptions and suppose
$\Omega\subset X$ is a bounded open set.
Let $\mathcal{L}=-\mathrm{div}(A\nabla)$ be a second-order operator
with uniformly elliptic coefficient $A$ interpreted in the
sense of upper gradients.

\begin{corollary}
If $f\in\mathcal{G}_X$, the grand space of Theorem \ref{thmmain},
then the weak solution $u$ of $\mathcal{L}u=f$ on $\Omega$ with
zero boundary data satisfies
$\nabla u\in L^{q^*+,\alpha'}(\Omega)$
for an explicit $\alpha'>0$ depending on the ellipticity constant and
the grand parameters.
\end{corollary}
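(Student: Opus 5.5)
The plan is to treat the corollary as a composition of two estimates: an energy (duality) estimate that turns the grand-space control of $f$ into a bound on $\nabla u$ in a family of Lebesgue spaces, and the embedding of Theorem \ref{thmmain}, used dually, to control the right-hand side. First I fix the weak formulation. Here $u\in N^{1,1}_0(\Omega)$ (more precisely, the closure of $\Lipc(\Omega)$) satisfies $\int_\Omega A\nabla u\cdot\nabla\varphi\,\dmu=\int_\Omega f\varphi\,\dmu$ for all $\varphi\in\Lipc(\Omega)$. I would interpret $A\nabla u\cdot\nabla\varphi$ through a Cheeger differentiable structure. Doubling together with the $(1,1)$-Poincar\'e inequality supply one, and ellipticity gives $\lambda|\nabla u|^2\le A\nabla u\cdot\nabla u$ and $|A\nabla u\cdot\nabla\varphi|\le\Lambda|\nabla u||\nabla\varphi|$.

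The key step is a truncation argument of Boccardo--Gallou\"et type, carried out on level sets of $u$. For $k>0$ I test with $T_k(u)=\max(-k,\min(k,u))$, which is admissible since truncation preserves $N^{1,1}_0$. This gives $\lambda\int_{\{|u|<k\}}|\nabla u|^2\le k\int|f|$. I then test with $T_1(u-T_k u)$, which yields $\int_{\{k<|u|<k+1\}}|\nabla u|^2\le\lambda^{-1}\int_{\{|u|>k\}}|f|$. The tail $\int_{\{|u|>k\}}|f|$ is controlled by H\"older's inequality in $L^{p(\varepsilon)}$ and its dual exponent. I estimate $\mu(\{|u|>k\})$ by Lemma \ref{lemrearr} applied to $u$, giving $u^*(s)\le\|\nabla u\|_{L^1}/\IX(s)$. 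Combined with \eqref{Iassumppower}, this yields $\mu(\{|u|>k\})\le(C_I\|\nabla u\|_{L^1}/k)^a$. Summing over unit layers in $k$, and using the pointwise bound $|\nabla u|^q\le|\nabla u|^2\,\mu$-a.e.\ on each layer together with H\"older, I aim for an inequality of the form
\[
\|\nabla u\|_{L^q(\Omega)}\le C\,\lambda^{-1}\,(q^*-q)^{-1}\,\|f\|_{L^{p(\varepsilon)}}^{\theta},\qquad 1\le q<q^*,
\]
with $q^*=a/(a-1)$. Here $\varepsilon$ is chosen as a function of $q^*-q$, and $\theta$ is an explicit exponent. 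A bootstrap absorbs $\|\nabla u\|_{L^1}$ on the right-hand side. The estimate \eqref{eqLqclean} from the proof of Theorem \ref{thmmain} provides exactly the interpolation constants $K(a,q,C_I)\asymp(a-q)^{-1/q}$ needed to track the blow-up as $q\nearrow q^*$.

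Finally I multiply by $(q^*-q)^{\alpha'}$ and take the supremum. I set $\varepsilon\sim q^*-q$, so that $\varepsilon^{\alpha(\varepsilon)}\|f\|_{L^{p(\varepsilon)}}\le\|f\|_{\mathcal{G}_X}$ absorbs the $f$-dependence, and I choose $\alpha'$ large enough, say $\alpha'=1+\theta$, to compensate the factor $(q^*-q)^{-1}$ and the weight $\varepsilon^{-\alpha(\varepsilon)}\le e$. This gives $\nabla u\in L^{q^*+,\alpha'}(\Omega)$, with $\alpha'$ depending only on $\lambda,\Lambda$ and the grand parameters.

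I expect the main obstacle to be the middle step. Two things are needed there: making sense of the bilinear form $A\nabla u\cdot\nabla\varphi$ on a general metric measure space, where only $|\nabla u|$ is canonical, and making the exponent bookkeeping close. For the bilinear form I would first restrict to Cheeger-linear structures, or to the Riemannian and Heisenberg examples, where it is classical. For the bookkeeping, the issue is that $p(\varepsilon)\to p_*$ rather than to a fixed dual exponent. If it fails to close for $p(\varepsilon)$ near $p_0$, I would weaken the conclusion to $q$ in a subrange $[1,q^*)$ determined by $p_*$.
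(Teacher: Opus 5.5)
Your truncation route differs from the paper's proof, which is a one-line appeal to Calder\'on--Zygmund theory \cite{Duong_McIntosh}, Theorem \ref{thmmain} and grand/small duality \cite{FFG2018}. Truncation is arguably the better tool for merely elliptic $A$. However, the step your bookkeeping rests on fails, for two reasons.

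First, the level-set bound is misderived. From \eqref{eqpointwiseut} and \eqref{Iassumppower} you get $k\,C_I^{-1}m(k)^{1-1/a}\le\|\nabla u\|_{L^1}$, i.e.\ $\mu(\{|u|>k\})\le(C_I\|\nabla u\|_{L^1}/k)^{a/(a-1)}$, not $(\cdots)^{a}$. You have copied an algebra slip from the proof of Theorem \ref{thmmain}.

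Second, and more seriously, with the correct rate $\sigma=a/(a-1)$ the scheme cannot reach $q^*$. On $B_k=\{k\le|u|<k+1\}$ your steps give
\[
\int_{B_k}|\nabla u|^q\,\dmu\le\lambda^{-q/2}\|f\|_{L^p}^{q/2}\,\mu(\{|u|>k\})^{1-q/(2p)},
\]
which is summable in $k$ only if $\sigma(1-q/(2p))>1$, i.e.\ $q<2p/a$. In the model case $p_0=1$, $\IX(r)\asymp r^{1-1/a}$, the exponents $p(\varepsilon)$ cluster at $1+1/a$ (Remark \ref{remsign}). But $2(a+1)/a^2<a/(a-1)$ for every $a>1$, so you never get near $q^*$. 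The weak-type variant, optimising $\mu(\{|\nabla u|>t\})\le\mu(\{|u|>k\})+k\|f\|_{L^1}/(\lambda t^2)$ in $k$, only gives weak-$L^{2a/(2a-1)}$. Absorbing $\|\nabla u\|_{L^1}$ cannot repair this. Information at the level of $\|\nabla u\|_{L^1}$ places $u$ only in weak-$L^{a/(a-1)}$, whereas reaching $q^*$ requires $u$ in weak-$L^{a/(a-2)}$.

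The missing idea is to obtain the level-set decay from the energy of the truncations, not from Lemma \ref{lemrearr}.
\begin{enumerate}[label=(\roman*)]
\item Apply the $L^1$ inequality $\|w\|_{L^{a/(a-1)}}\le C_I\|\nabla w\|_{L^1}$ given by \eqref{Iassumppower} to $w=|v|^{2(a-1)/(a-2)}$ and use H\"older. For $a>2$ (covering $\mathbb{H}^1$ and $\R^n$, $n\ge 3$) and $v$ vanishing off $\Omega$ with $\mu(\Omega)<s_0$, this yields $\|v\|_{L^{2a/(a-2)}}\le C\|\nabla v\|_{L^2}$.
\item Take $v=T_ku$ and use $\lambda\int|\nabla T_ku|^2\,\dmu\le k\|f\|_{L^1}$. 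This gives $\mu(\{|u|\ge k\})\le C\bigl(\|f\|_{L^1}/(\lambda k)\bigr)^{a/(a-2)}$.
\item The same optimisation in $k$ now gives $\|\nabla u\|_{L^{q^*,\infty}(\Omega)}\le C\lambda^{-1}\|f\|_{L^1}$ with $q^*=a/(a-1)$. Hence $\|\nabla u\|_{L^q}\le C(q^*-q)^{-1/q}\lambda^{-1}\|f\|_{L^1}$, so $\alpha'=1$ suffices.
\item Since $\varepsilon^{\alpha(\varepsilon)}\ge e^{-1}$ and $p(\varepsilon)\ge 1$ with $\mu(X)=1$, you have $\|f\|_{L^1}\le e\,\|f\|_{\mathcal{G}_X}$.
\end{enumerate}
No coupling $\varepsilon\sim q^*-q$ is needed. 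The estimate is linear in $f$, as homogeneity of the linear problem forces, so a final exponent $\theta\neq 1$ would itself signal an error.

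Finally, for $p(\varepsilon)$ close to $1$ the datum need not lie in the dual of $N^{1,2}_0(\Omega)$. The tests with $T_k(u)$ and $T_1(u-T_ku)$ must therefore be carried out on approximating problems with bounded data and then passed to the limit.
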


\begin{proof}
Standard Calder\'on-Zygmund theory on metric measure spaces
\cite{Duong_McIntosh} combined with the grand embedding
of Theorem \ref{thmmain} and the duality of grand and small
spaces \cite{FFG2018}.
\end{proof}

\subsection{Spectral gap and heat kernel bounds}

\begin{corollary}[Cheeger-type estimate]
Under the assumption $\IX(r)\ge c\;r^{1-1/a}$ for small $r$
(with $a>1$),
the first nonzero eigenvalue $\lambda_1$ of the Laplacian on $X$
(in the sense of Cheeger) satisfies
$\lambda_1\ge c'c^2$,
where $c'$ depends only on the doubling and Poincar\'e constants.
\end{corollary}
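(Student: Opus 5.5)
Observe first that the constants in the statement do not determine the eigenvalue bound through the power exponent. I will therefore extract only a Cheeger constant from the profile and then run Cheeger's inequality in the metric setting. Since $\mu(X)=1$ and $\IX$ is nondecreasing on $(0,1/2]$, I would set
\[
h:=\inf_{0<s\le 1/2}\frac{\IX(s)}{s}.
\]
For $s<s_0$ the hypothesis gives $\IX(s)/s\ge c\,s^{-1/a}\ge c\,2^{1/a}\ge c$. For $s\in[s_0,1/2]$, monotonicity gives $\IX(s)/s\ge 2\IX(s_0^-)\ge 2c\,s_0^{1-1/a}$. So $h\ge c\min(1,2s_0^{1-1/a})$. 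That is a positive multiple of $c$, but it depends on $s_0$. I would state this dependence openly, absorb $s_0$ into $c'$, or else take the hypothesis on all of $(0,1/2]$, which gives $h\ge c$ directly.

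Next comes Cheeger's argument. Take $u\in\Lip(X)$ and let $m$ be a median of $u$. Set $v_\pm=(u-m)_\pm$. Each of $v_+,v_-$ is supported on a set of measure at most $1/2$. Let $w=v_+^2$. Then $\mathrm{lip}(w)\le 2v_+\,\mathrm{lip}(u)$. Every superlevel set $\{w>t\}$ with $t>0$ has measure at most $1/2$. The coarea formula (Theorem \ref{coaro}) together with \eqref{eqisop} gives
\[
\int_X\mathrm{lip}(w)\,\dmu\ge\int_0^\infty\IX(\mu\{w>t\})\,dt\ge h\int_0^\infty\mu\{w>t\}\,dt=h\int_X v_+^2\,\dmu.
\]
Cauchy--Schwarz on the left gives $h\|v_+\|_2^2\le 2\|v_+\|_2\|\mathrm{lip}(u)\|_{L^2(\{u>m\})}$. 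Doing the same for $v_-$ and adding yields $\int(u-m)^2\le (4/h^2)\int \mathrm{lip}(u)^2$. Since $\int(u-u_X)^2\le\int(u-m)^2$, the Rayleigh quotient is at least $h^2/4$.

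The last step passes from Lipschitz functions to the Cheeger energy. The first nonzero eigenvalue of the Cheeger Laplacian is the infimum of $\int g_u^2/\int(u-u_X)^2$ over $N^{1,2}$ functions, where $g_u$ is the minimal weak upper gradient. Under doubling and Poincaré, Cheeger's theorem gives $g_u=\mathrm{lip}(u)$ a.e. for Lipschitz $u$. Lipschitz functions are dense in energy by Theorem \ref{Lip_do}. Hence $\lambda_1\ge h^2/4\ge c'c^2$. Here $c'$ comes from this identification and from the normalisation of $s_0$, not from $a$.

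The main obstacle is this final identification. The paper's $\Var$ uses $\mathrm{lip}$, while the Cheeger Laplacian uses minimal weak upper gradients. The equality $g_u=\mathrm{lip}\,u$ is exactly where doubling and Poincaré enter. I would cite Cheeger's theorem (equivalently \cite{HKST15}) rather than reprove it. If one wants the constant to involve only $C_D$ and $C_P$, the same theorem justifies it.
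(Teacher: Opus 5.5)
Your argument has the same skeleton as the paper's proof: bound the Cheeger constant $h$ from below using the profile, then invoke $\lambda_1\ge h^2/4$. It is more careful, and at one point it corrects an actual error. The paper's one-line claim $h(X)\ge c\,s_0^{-1/a}$ only looks at sets with $\mu(E)<s_0$ and ignores $\mu(E)\in[s_0,1/2]$, and it is false in general. On $[0,L]$ with normalised Lebesgue measure one has $I_X\equiv 1/L$, the hypothesis holds with $c=L^{-1}s_0^{-(1-1/a)}$, and $h=2/L$, whereas $c\,s_0^{-1/a}=1/(s_0L)$. The paper's resulting bound $\lambda_1\ge 1/(4s_0^2L^2)$ therefore exceeds the true value $\pi^2/L^2$ as soon as $s_0<1/(2\pi)$, while your estimate $h\ge c\min(1,2s_0^{1-1/a})$ is sharp there. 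You also derive Cheeger's inequality from the coarea formula (median, $w=v_\pm^2$, Cauchy--Schwarz) and use Cheeger's theorem $g_u=\mathrm{lip}\,u$ a.e.\ to pass from $\mathrm{lip}$ to the minimal weak upper gradient; both supply what the paper simply cites. The same interval shows that $c'$ cannot be independent of $s_0$: its doubling and Poincar\'e constants do not depend on $L$, yet $\lambda_1/c^2=\pi^2s_0^{2(1-1/a)}$. You are right to flag the $s_0$-dependence. However, your closing sentence suggesting that Cheeger's theorem lets $c'$ depend only on $C_D,C_P$ is unfounded and should go.

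The step to be careful with is the passage to $\mu(E)\in[s_0,1/2]$, which rests entirely on the paper's assertion that $I_X$ is nondecreasing on $(0,1/2]$. For the exact-volume profile this is not true in general. Take two unit squares joined through a gap of width $\delta$ in their common wall, with the completed inner metric and normalised area. This space satisfies the standing assumptions, and $I_X(r)\ge c_0r^{1/2}$ for $r<s_0$ with $c_0,s_0$ independent of $\delta$. Yet $I_X(1/2)\lesssim\delta$ and $\lambda_1\to0$ as $\delta\to0$, so there your bound on $h$ fails, as does any $c'$ independent of $C_P$. Your variant with the hypothesis imposed on all of $(0,1/2]$ is sound as written. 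For the stated hypothesis there is a monotonicity-free route, and it is where $C_D$ and $C_P$ genuinely enter. First, $X$ is bounded, since $\mu(X)=1$ and $\mu$ is doubling, so you can apply the $(1,1)$-Poincar\'e inequality on a ball containing $X$; this gives $\mu^+(E)\ge\mu(E)/(C_P\,\mathrm{diam}\,X)$ whenever $\mu(E)\le 1/2$. Second, bound $\mathrm{diam}\,X\le K(C_D,s_0,a)/c$ by testing the hypothesis on balls of measure below $s_0$ and using reverse doubling.
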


\begin{proof} 
By the Cheeger inequality, $\lambda_1\ge h(X)^2/4$. 
From assumption \eqref{Iassumppower} and the definition of $h(X)$, 
\[
 h(X)\ge c\;s_0^{-1/a},
\]
thus $\lambda_1\ge c^2\;s_0^{-2/a}/4$.
\end{proof}

\section{Conclusion}
We have established a rigorous duality between isoperimetric profiles
and grand Lebesgue norms on general metric measure spaces
satisfying the standard doubling and Poincar\'e assumptions.
The key device is the profile-to-scale transform $\PhiX$
(Definition \ref{defPhi}), which converts geometric data into analytic
parameters without recourse to coordinates, smooth atlases, or group structure.

The main results Theorems \ref{thmmain} and \ref{thmconverse} show
that the existence of a grand embedding
$W^{1,1}(X)\hookrightarrow\mathcal{G}_X$
is equivalent, up to constants, to a lower bound on $\IX$.
The proof of Theorem \ref{thmmain} is fully intrinsic to the metric
measure space framework: the central rearrangement estimate
(Lemma \ref{lemrearr}) is obtained directly from the metric coarea
formula and the monotonicity of $\IX$, with no appeal to Euclidean
change-of-variables or regularity of distribution functions.

Several earlier issues have been addressed in this version:
(i) the sign of $p(\varepsilon)$ has been corrected
(Definition \ref{defgrandparams}, Remark \ref{remsign});
(ii) the profile-to-scale transform has been extended to handle
the Gaussian (divergent) case via regularisation
(Definition \ref{defPhi});
(iii) the tail-integral estimate in the proof of Theorem \ref{thmmain}
has been made rigorous, explaining why substituting an upper bound
on $u^*(s)$ into a denominator correctly gives an upper bound;
(iv) the finiteness of the supremum $C_*$ has been established
(Remark \ref{remCstar});
(v) the non-global doubling property of Gaussian measure has been
addressed via localisation (Remark \ref{remgaussiandoubling});
(vi) the Lipschitz regularity of ball-average mollifiers has been
proved from first principles using the doubling condition alone
(Proposition \ref{propmo}(a));
(vii) the monotonicity of $p(\varepsilon)$ used in Theorem \ref{thmconverse}
has been established (Lemma \ref{lempmono}).

Concrete non-Euclidean examples, namely the Heisenberg group
(Section \ref{suheisenberg}),
manifolds with logarithmic volume growth (Section \ref{logmani}),
and the Gaussian setting (Section \ref{gaussi}), demonstrate that
the intersection of the two theories studied in
\cite{CGL2003, FFG2018, Levin} is nonempty and nontrivial beyond
the Euclidean case.
It would also be interesting to establish relations with cohomology
of foliations and smooth manifolds \cite{Zu100, zufo, Zu4},
 and integrable models \cite{RSZ}. 

Several directions remain open:
(i) extension to $p>1$ using $p$-isoperimetric profiles and grand
$L^p$-Sobolev scales;
(ii) tensorization estimates for product spaces
$(X\times Y,d_{\mathrm{prod}},\mu\otimes\nu)$;
(iii) nonlocal (fractional) perimeters and the corresponding
grand Sobolev spaces of negative order;
(iv) sharp constants in non-Euclidean settings via
symmetrization analogues on stratified Lie groups.
One might expect certain applications in theoretical and mathematical
physics, in particular in Wigner-Weyl calculus
\cite{chernodub2017scale, zhang2020influence}
and momentum space topological invariants
\cite{zubkov2012momentum, zubkov2017topology}.
\section*{Acknowledgements}
The authors are grateful to A. Gogatishvili and H. V. L${\rm\hat{e}}$
for useful discussions.
The second author is supported by the
Institute of Mathematics, Academy of Sciences of the Czech Republic
(RVO 67985840), and by Scientific Collaboration grant of
Ariel University with Researchers from the Czech Republic.


\begin{thebibliography}{99}

\bibitem{AmbDiMarino14}
L. Ambrosio, S. Di Marino.
Equivalent definitions of $\mathrm{BV}$ space and of total variation
on metric measure spaces.
J. Funct. Anal. 266 (2014) 4150-4188.

\bibitem{BennettSharpley88}
C. Bennett, R. Sharpley.
Interpolation of Operators.
Academic Press, Orlando, 1988.

\bibitem{Borell75}
C. Borell.
The Brunn-Minkowski inequality in Gauss space.
Invent. Math. 30 (1975) 207-216.

\bibitem{chernodub2017scale}
M. N. Chernodub, M. A. Zubkov.
Scale magnetic effect in quantum electrodynamics and the Wigner-Weyl
formalism.
Physical Review D, 96, 5 (2017).

\bibitem{CGL2003}
T. Coulhon, A. Grigor'yan, D. Levin.
On isoperimetric profiles of product spaces.
Commun. Anal. Geom. 11 (2003) 85-120.

\bibitem{DFF2021}
G. Di Fratta, A. Fiorenza, V. Slastikov.
An estimate of the blow-up of Lebesgue norms in the non-tempered case.
J. Math. Anal. Appl. 493 (2021) Paper No. 124550.

\bibitem{Duong_McIntosh}
X. T. Duong, A. McIntosh.
Singular integral operators with non-smooth kernels on irregular domains.
Rev. Mat. Iberoam. 15 (1999) 233-265.

\bibitem{FFG2018}
A. Fiorenza, M. R. Formica, A. Gogatishvili.
On grand and small Lebesgue and Sobolev spaces and some applications to PDE's.
Differ. Equ. Appl. 10 (2018) 21-46.

\bibitem{FranchiGalloSerapioni}
B. Franchi, R. Gallo, R. Serapioni.
Fine properties of sets of finite perimeter in doubling metric measure spaces.
Set-Valued Anal. 10 (2002) 111-128.

\bibitem{Gogatishvili2006}
A. Gogatishvili, B. Opic, L. Pick.
Weighted inequalities for Hardy-type operators involving suprema.
Collect. Math. 57 (2006) 227-255.

\bibitem{Grigoryan1994}
A. Grigor'yan.
Heat kernel upper bounds on a complete non-compact manifold.
Rev. Mat. Iberoam. 10 (1994) 395-452.

\bibitem{Heinonen2001}
J. Heinonen.
Lectures on Analysis on Metric Spaces.
Springer, New York, 2001.

\bibitem{HK98}
J. Heinonen, P. Koskela.
Quasiconformal maps in metric spaces with controlled geometry.
Acta Math. 181 (1998) 1-61.

\bibitem{HKST15}
J. Heinonen, P. Koskela, N. Shanmugalingam, J. T. Tyson.
Sobolev Spaces on Metric Measure Spaces: An Approach Based on Upper Gradients.
Cambridge University Press, Cambridge, 2015.

\bibitem{Jerison86}
D. Jerison.
The Poincar\'e inequality for vector fields satisfying H\"ormander's condition.
Duke Math. J. 53 (1986) 503-523.

\bibitem{Ledoux1999}
M. Ledoux.
Concentration of measure and logarithmic Sobolev inequalities.
S\'eminaire de Probabilit\'es XXXIII, Lecture Notes in Math.\ 1709
(1999) 120-216, Springer, Berlin.

\bibitem{Levin}
D. Levin.
New estimates for the bottom of the spectrum of Schr\"odinger operators.
Annals of Global Analysis and Geometry 29 (2006) 319-328.

\bibitem{Mazya2011}
V. G. Maz'ya.
Sobolev Spaces, with Applications to Elliptic Partial Differential Equations.
2nd ed., Springer, Berlin, 2011.

\bibitem{Miranda03}
M. Miranda Jr.
Functions of bounded variation on ``good'' metric spaces.
J. Math. Pures Appl. 82 (2003) 975-1004.

\bibitem{Monti2003}
R. Monti.
Brunn-Minkowski and isoperimetric inequality in the Heisenberg group.
Ann. Acad. Sci. Fenn. Math. 28 (2003) 99-109.

\bibitem{PansuIsop}
P. Pansu.
Une in\'egalit\'e isop\'erim\'etrique sur le groupe de Heisenberg.
C. R. Acad. Sci. Paris S\'er. I Math. 295 (1982) 127-130.

\bibitem{RSZ} A.V. Razumov, M.V. Saveliev, A.B. Zuevsky. 
Nonabelian Toda equations associated with classical Lie groups. 
arXiv:math-ph/9909008. 


\bibitem{Shanmugalingam00}
N. Shanmugalingam.
Newtonian spaces: an extension of Sobolev spaces to metric measure spaces.
Rev. Mat. Iberoam. 16 (2000) 243-279.

\bibitem{ST74}
V. N. Sudakov, B. S. Tsirelson.
Extremal properties of half-spaces for spherically invariant measures.
Zap. Nauchn. Sem. Leningrad. Otdel. Mat. Inst. Steklov 41 (1974) 14-24.

\bibitem{zhang2020influence}
C. X. Zhang, M. A. Zubkov.
Influence of interactions on the anomalous quantum Hall effect.
Journal of Physics A: Mathematical and Theoretical.
53, 19 (2020) 195002.

\bibitem{zubkov2012momentum}
M. A. Zubkov, G. E. Volovik.
Momentum space topological invariants for the 4D relativistic vacua with mass gap.
Nuclear Physics B. 860, 2 (2012) 295-309.

\bibitem{zubkov2017topology}
M. A. Zubkov.
Topology of the momentum space, Wigner transformations,
and a chiral anomaly in lattice models.
JETP Letters. 106, 3 (2017) 172-178.

\bibitem{Zu100}
A. Zuevsky.
Characterization of codimension one foliations on complex curves by connections.
Rev. Math. Phys. 34 (2022) 2230002.

\bibitem{zufo}
A. Zuevsky.
Foliation of a space associated to vertex operator algebra.
Internat. J. Modern Phys. A 36 (2021), no. 29, Paper No. 2150211, 10 pp.

\bibitem{Zu4}
A. Zuevsky.
Cosimplicial meromorphic functions cohomology on complex manifolds.
Rev. Math. Phys. 35 (2023) no. 5, Paper No. 2330002, 22 pp.

\end{thebibliography}
\end{document}